\documentclass{article}
\usepackage{amsthm,amsfonts, amsbsy, amssymb,amsmath,graphicx}
\usepackage{graphics}
\usepackage[english]{babel}
\newtheorem{thm}{Theorem}
\newtheorem{lem}{Lemma}

\newtheorem{crl}{Corollary}

\newtheorem{st}{Statement}
\newtheorem{cj}{Conjecture}

\textwidth=15.5cm \textheight=21cm \oddsidemargin=11.3mm

\newcounter{tdfn}
\setcounter{tdfn}{1}
\newenvironment{dfn}
{\vspace{0.15cm}{\bf Definition \arabic{tdfn}.}} {\par
\addtocounter{tdfn}{1}}

\newcounter{trk}
\setcounter{trk}{1}
\newenvironment{rk}
{\vspace{0.15cm}{\bf Remark \arabic{trk}.}} {\par
\addtocounter{trk}{1}}

 \def\Z{{\mathbb Z}}
 \def\0{{\mathbbf 0}}
 \def\1{{\mathbbf 1}}

\newcommand{\skcrro}{\raisebox{-0.25\height}{\includegraphics[width=0.5cm]{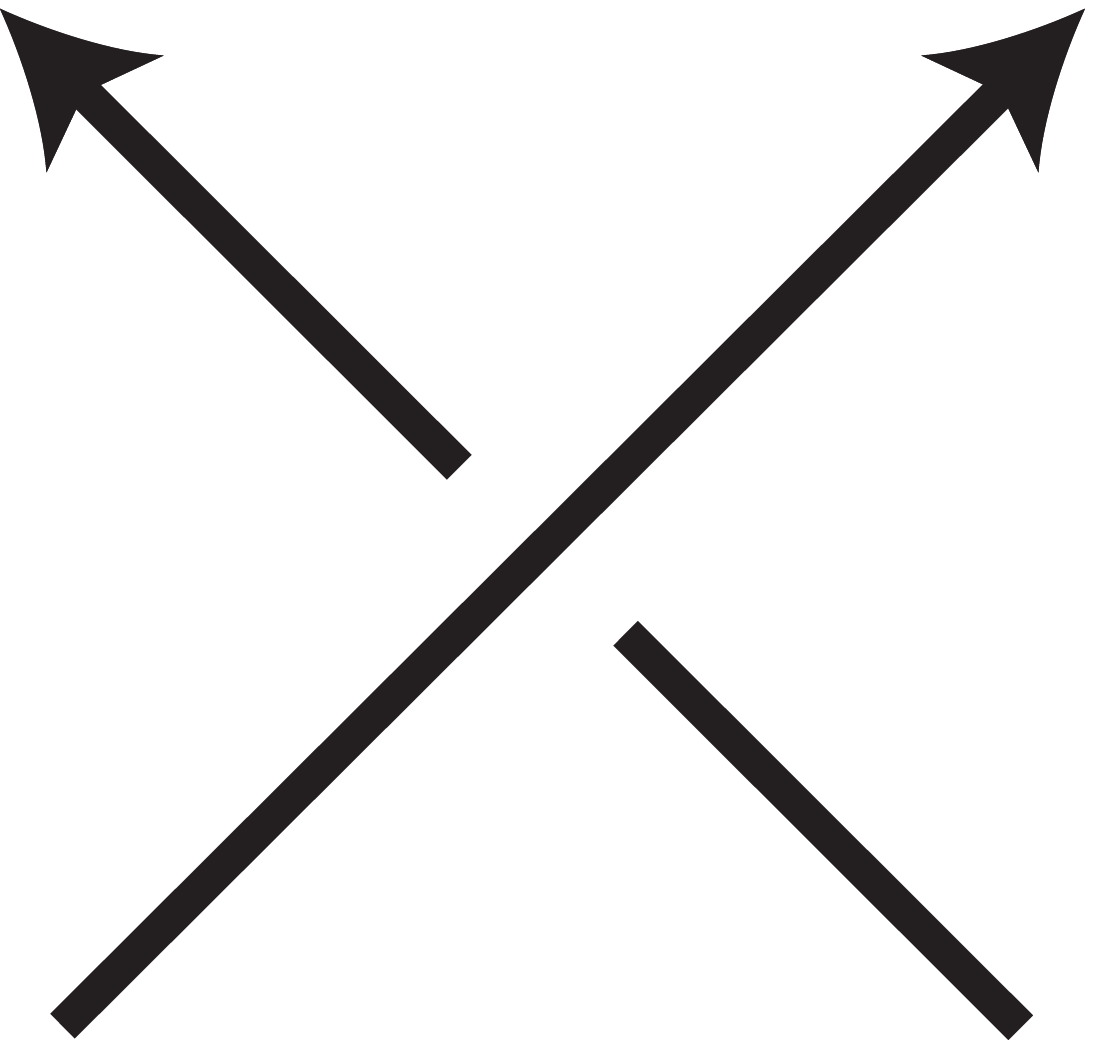}}}
\newcommand{\skcrlo}{\raisebox{-0.25\height}{\includegraphics[width=0.5cm]{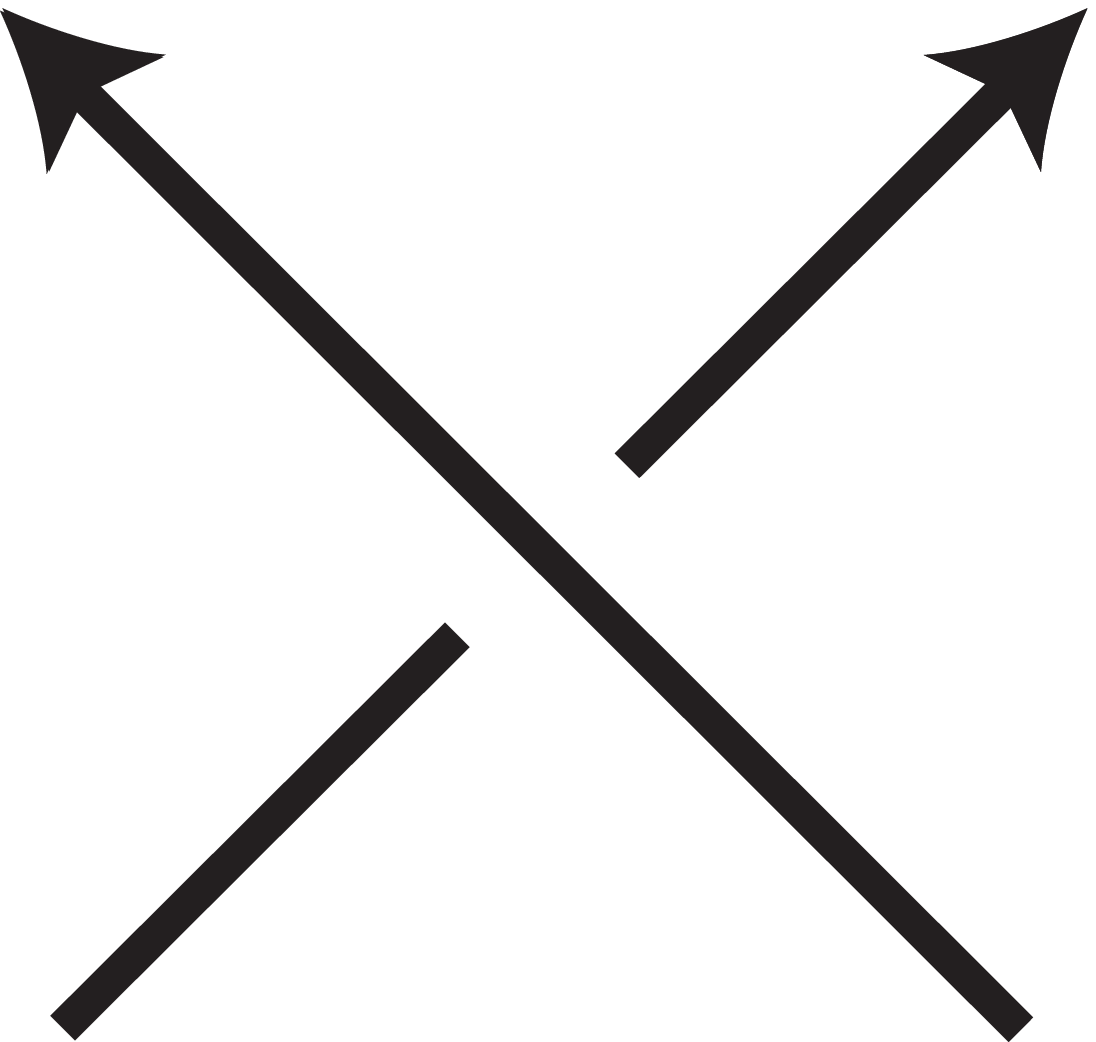}}}

\title{Graphical Constructions for the $sl(3), so(3)$ and $G_{2}$ Invariants  \\ for Virtual Knots, Virtual Braids and Free Knots}

{\author{Louis Hirsch Kauffman\\ Department of Mathematics, Statistics \\ and Computer Science (m/c 249)    \\ 851 South Morgan Street   \\ University of Illinois at Chicago\\
Chicago, Illinois 60607-7045 USA\\
$<$kauffman@uic.edu$>$\\
and\\
Vassily Olegovich Manturov\\
Bauman Moscow State Technical Unversity\\
2nd Baumanskaya St.5/1, Moscow, 105005 Russia\\
and\\
Laboratory of Quantum Topology\\
Chelyabinsk State University\\
Brat'ev Kashirinykh street 129, 
Chelyabinsk 454001, Russia\\
$<$vomanturov@yandex.ru$>$\\}

\begin{document}

\maketitle

\abstract{We construct graph-valued analogues of the Kuperberg
$sl(3)$ and $G_2$ invariants for virtual knots. The restriction of the $sl(3)$ and $G_{2}$
invariants for classical knots coincides with the usual Homflypt
$sl(3)$ invariant and $G_{2}$ invariants. For virtual knots and graphs these invariants provide new graphical information that allows one to prove minimality theorems and to 
construct new invariants for free knots (unoriented and unlabeled Gauss codes taken up to abstract Reidemeister moves).  A novel feature of this approach is that some knots are 
of sufficient complexity that they evaluate themselves in the sense that the invariant is the knot 
itself seen as a combinatorial structure. The paper generalizes these structures to virtual braids and 
discusses the relationship with the original Penrose bracket for graph colorings.}
\smallbreak

\noindent {AMS Subject classification: 57M25}

\noindent {Keywords: Knot, link, virtual knot, graph, invariant, Kuperberg $sl(3)$ bracket, 
Kuperberg $so(3)$ bracket, Kuperberg $G_{2})$ bracket,quantum invariant}

\section{Introduction}
This paper studies a generalization to virtual knot theory of
the Kuperberg $sl(3)$ bracket invariant and the Kuperberg $G_{2}$ invariant. 
Kuperberg  discovered bracket state sums that depend upon a reductive graphical procedure
similar to the Kauffman bracket but more complex. The present paper contains new results and results first presented or announced  in \cite{Graph,AC} and puts these results in the more general contexts of minimality, parity, virtual braids and graph coloring. The invariants that we study here apply to virtual knots and links,
flat virtual knots and links, and free knots and links. Free knots and links are essentially the unlabeled Gauss diagrams, taken up to abstract Reidemeister moves, and are fundamental to all aspects of virtual
knot theory.
\bigbreak

In this paper we show that the
Kuperberg brackets can be uniquely defined and generalized to virtual knot
theory via their reductive graphical equations. These equations reduce
to scalars only for the planar graphs from classical knots. For 
virtual knots or free knots, there are unique graphical reductions to linear
combinations of reduced graphs with Laurent polynomial coefficients.
We will call these ``graph polynomials".
The ideal case, sometimes realized, is when the topological object is itself
the invariant, due to irreducibility. When this happens one can point to
combinatorial features of a topological object that must occur in all of
its representatives (first pointed out by Manturov in the context of parity).
These extended Kuperberg brackets specialize to
invariants of free knots and allow us to prove that many free knots are
non-trivial without using the parity restrictions we had been tied to
before.  
\smallbreak

In \cite{Graph, AC} we extended the Kuperberg
combinatorial construction of the quantum $sl(3)$ invariant for the
case of virtual knots. In \cite{KrasnovManturov} the $sl(3)$ invariant is applied to classical knots using virtuality. In this paper, we shall review this construction, the $so(3)$ construction and we shall analyze the corresponding construction for the $G_{2}$ invariant. In the case of the $G_{2}$ construction, we find that this kind of extension works best for free knots. This will be explained in the body of the paper. In speaking of knots in this paper we refer to both knots and links.

In many figures we adopt the usual convention that whenever we give a
picture of a relation, we draw only the changing part of it; outside
the figure drawn, the diagrams are identical. In Figure~\ref{kupbra} we illustrate the basic expansion relations for the $sl(3)$ bracket. Note that two sided faces and quadrilaterals are expanded. The oriented graphs that can appear must, due to the orientations imposed, have an even number of edges per face. Planar graphs with only cubic vertices are seen by the Euler formula to have regions with two, three, four or five faces. Since here three and five sided faces are ruled out, we see that in the plane the expansion of the $sl(3)$ bracket reduces to  scalar. We show that for virtual knots, with their underlying non-planar graph structure, this is no longer the case. The invariant generalizes to virtual knots, but takes values in graphs with Laurent polynomial coefficients. 
\smallbreak

\begin{figure}
     \begin{center}
     \begin{tabular}{c}
     \includegraphics[width=6cm]{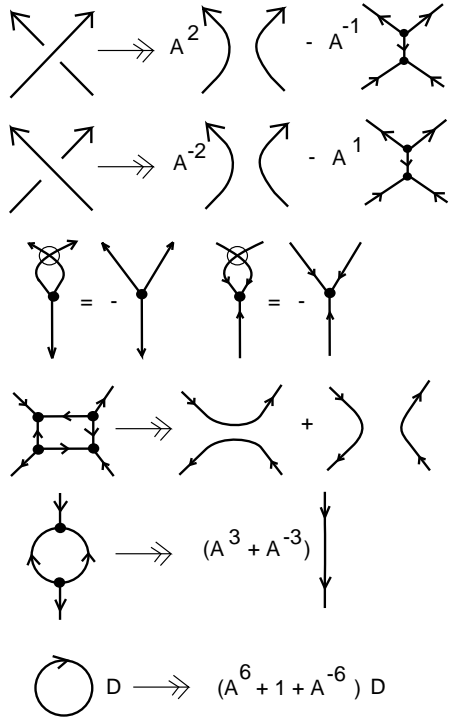}
     \end{tabular}
     \caption{\bf Kuperberg Bracket for $sl(3)$}
     \label{kupbra}
\end{center}
\end{figure}

\begin{figure}
     \begin{center}
     \begin{tabular}{c}
     \includegraphics[width=8cm]{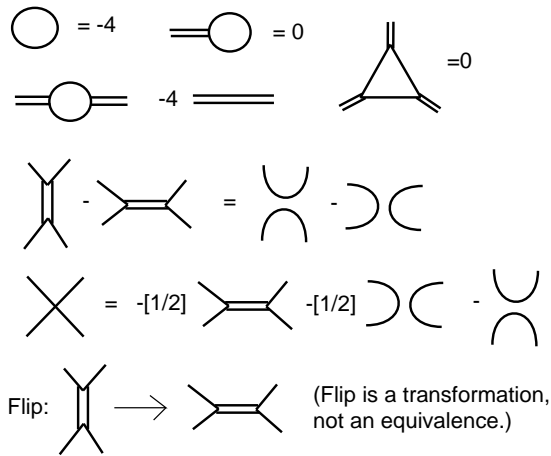}
     \end{tabular}
     \caption{\bf Kuperberg's Relation for $so(3)$}
     \label{kupdbl}
\end{center}
\end{figure}

\begin{figure}
     \begin{center}
     \begin{tabular}{c}
     \includegraphics[width=8cm]{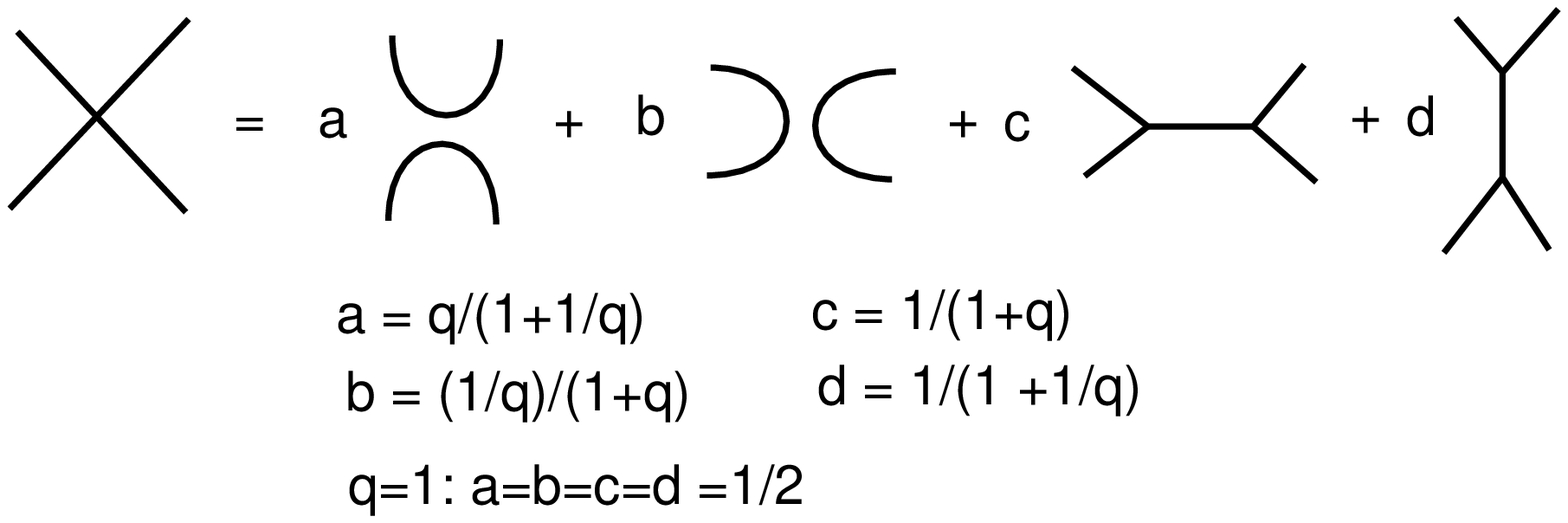}
     \end{tabular}
     \caption{\bf $G_2$ Crossing Expansion}
     \label{crossing}
\end{center}
\end{figure}

\begin{figure}
     \begin{center}
     \begin{tabular}{c}
     \includegraphics[width=8cm]{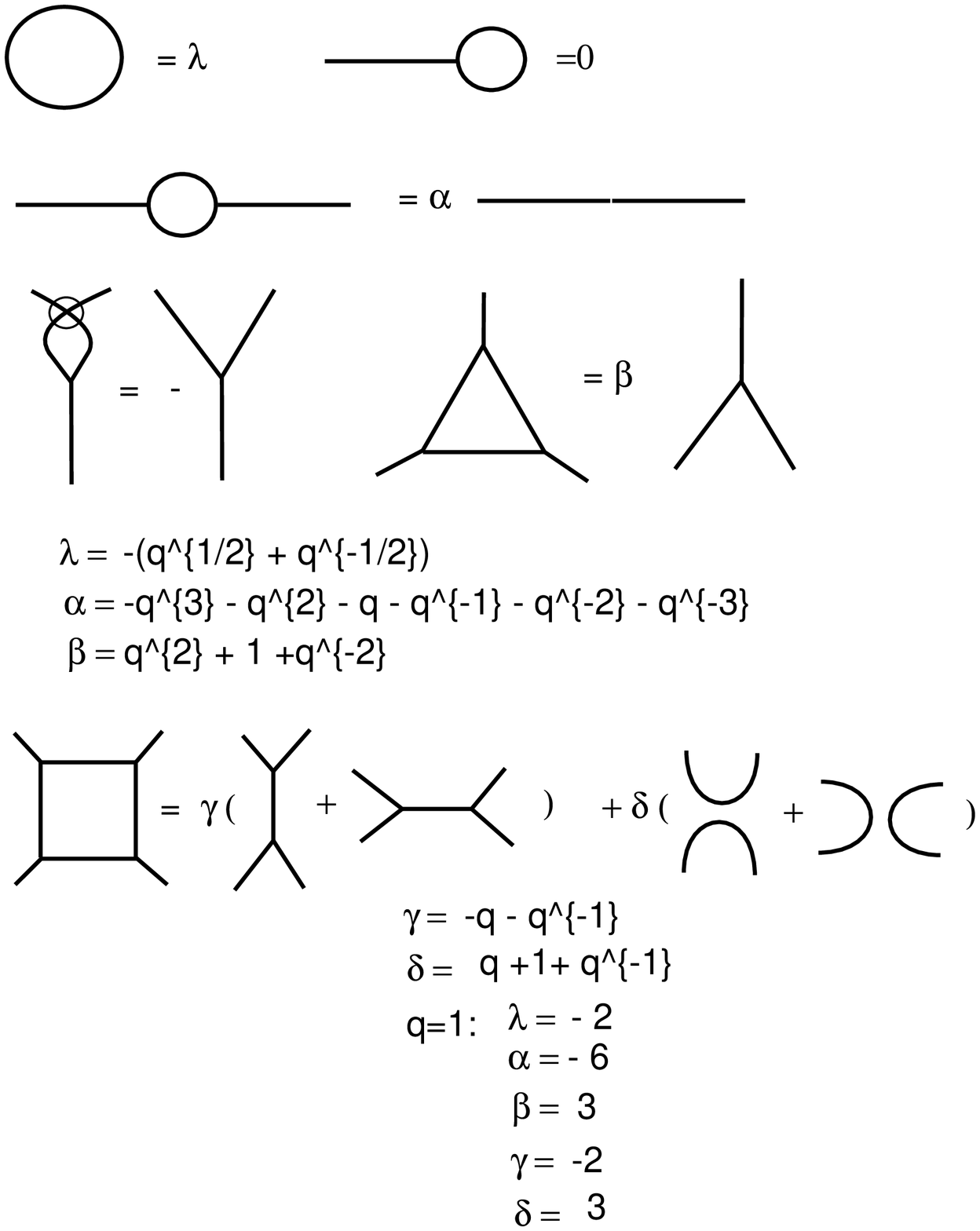}
     \end{tabular}
     \caption{\bf Loop, Triangle and Square Relations for the $G_2$ Bracket}
     \label{looptrisquare}
\end{center}
\end{figure}

\begin{figure}
     \begin{center}
     \begin{tabular}{c}
     \includegraphics[width=8cm]{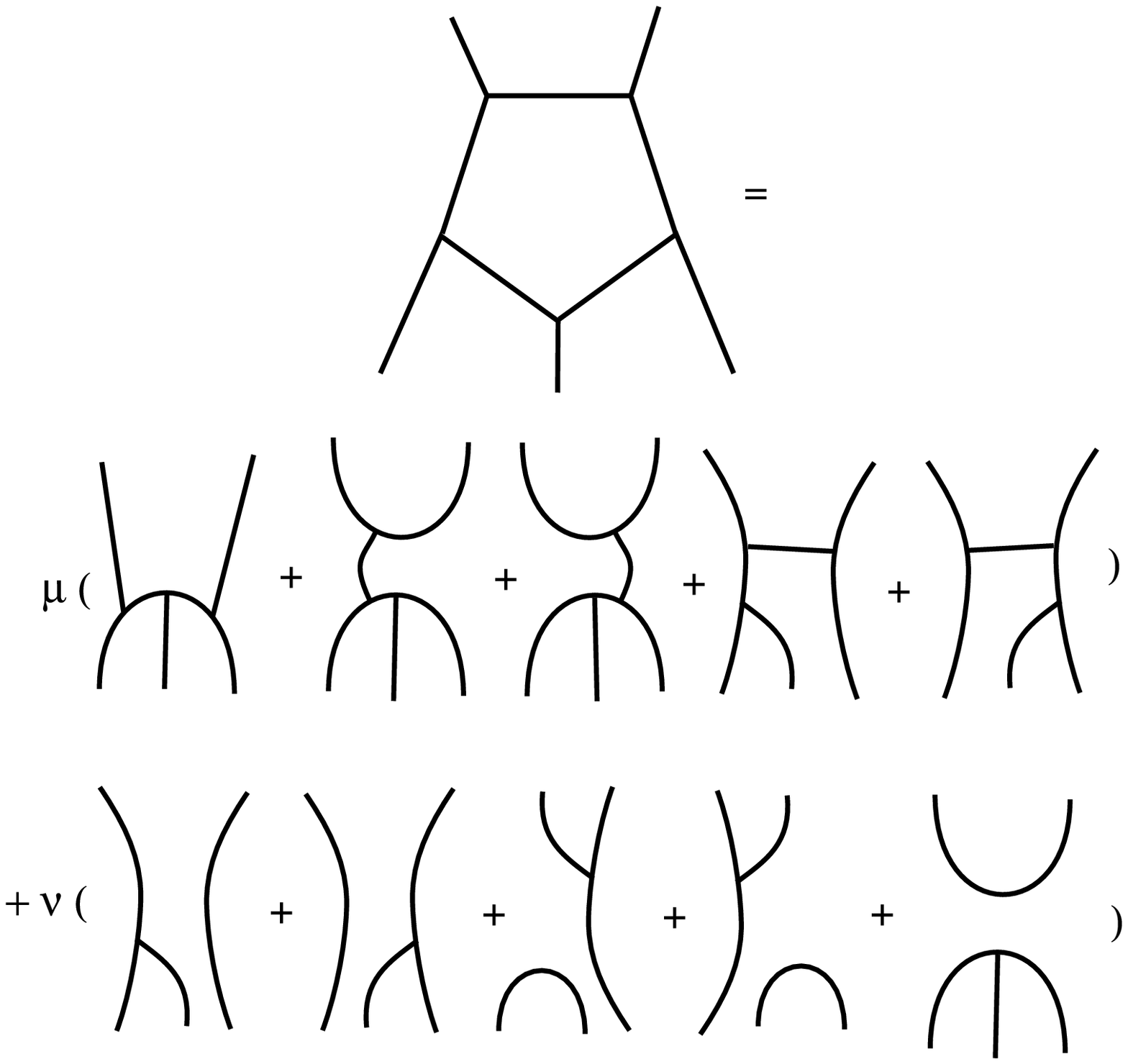}
     \end{tabular}
     \caption{\bf Pentagon Relation for the $G_2$ Bracket}
     \label{pentagon}
\end{center}
\end{figure}

For the case of the $sl(3)$ knot
invariant, one uses the relation shown in Figure~\ref{kupbra}, see \cite{Kup}.
This means that the left (resp., right) picture of (\ref{kupbra})
is resolved to a combination of the upper and lower pictures with
coefficients indicated on the arrows. The
advantage of Kuperberg's approach is that graphs of this sort which
can be drawn on the plane can be easily simplified, by using further
linear relations, to collections of Jordan curves, which in turn,
evaluate to elements from $\Z[A,A^{-1}]$. For planar graphs, these reductions continue all the way to scalars. In the case of non-planar graphs, some  states in the expansion may not contain quadrilaterals or bigons. Such states are irreducible and will receive a possibly non-zero polynomial coefficient in the 
new invariant.  Non-planar resolutions can leave irreducible graphs whose properties reflect the topology of virtual knots and links. The presence of a single such irreducible graph can guarantee the non-triviality and non-classicality of a virtual knot or a free knot. The oriented structure of the states of the $sl(3)$ invariant
makes it possible to sometimes reconstruct the knot from one of its states, but such reconstruction is not guaranteed.
\smallbreak

A triangle is {\it bad} if it has a
vertex with two outgoing edges; a quadrilateral is bad if it has two vertices each 
having two outgoing edges.  The $sl(3)$ invariant yields the minimality of those oriented framed $4$-regular graphs that  have no bad triangles and no bad quadrilaterals. \smallbreak

See Figure~\ref{kupdbl} for the expansion patterns for the $so(3)$ invariant. Here there are single and double lines in the unoriented graphs, and a graph with a maximal number of double lines can be sometimes used to decode the original knot. In the case of this expasion, one obtains an invariant of free knots.
\smallbreak

The main subject of the present paper is an extension of the Kuperberg $G_{2}$ invariant to an invariant of free knots. See Figure~\ref{crossing} for the expansion formula for this invariant, Figure~\ref{looptrisquare} for the loop, triangle and square relations, and Figure~\ref{pentagon} for the pentagon relation. Once again, this invariant can detect free knots when there are irreducible states.
\smallbreak

The present paper is organized as follows.  Section 2 introduces the basics of virtual knot theory including definitions of flat knots and free knots. Section 3 discusses parity and the parity bracket.
Section 4 contains the construction of the $sl(3)$ invariant for virtual knots, flat knots and free knots and the construction of the $G_2$ invariant for flat knots and free knots. Section 5 discusses minimality theorems and modes of detection of knottedness using these invariants. Section 6 gives a small collection of specific examples. Section 7 is a discussion of the Penrose coloring bracket, showing that it is a specialization of the $sl(3)$ invariant. Section 8 puts the $sl(3)$ invariant in the context of virtual braids and virtual Hecke algebra. Section 9 contains closing remarks.
\smallbreak

\noindent {\bf Acknowledgement.} We would like to take this opportunity to thank the 
Mathematisches Forschungsinstitut Oberwolfach for their hospitality and wonderful research atmosphere. A first paper on this subject  was written at the MFO in June of 2012 at a Research in Pairs of the present authors.  The present paper was completed at the MFO in June of 2014 at a second Research in Pairs of the authors.
\smallbreak

The second named author (V.O.M.) was partially supported by Laboratory of Quantum Topology of Chelyabinsk State University (Russian Federation government grant 14.Z50.31.0020) and by grants of the Russian Foundation for Basic Resarch, 13-01-00830,14-01-91161, 14-01-31288.
\bigbreak

\section{Basics of Virtual Knot Theory, Flat Knots and Free Knots}
This section contains a summary of definitions and concepts in virtual knot theory that will be used in the rest of the paper.
\smallbreak

{\it Virtual knot theory} studies the  embeddings of curves in thickened surfaces of arbitrary
genus, up to the addition and removal of empty handles from the surface. See \cite{VKT,DVK}.  
Virtual knots have a special diagrammatic theory, described below,
that makes handling them
very similar to the handling of classical knot diagrams.  \smallbreak  

In the diagrammatic theory of virtual knots one adds 
a {\em virtual crossing} (see Figure~\ref{Figure 1}) that is neither an over-crossing
nor an under-crossing.  A virtual crossing is represented by two crossing segments with a small circle
placed around the crossing point. 
\smallbreak

Moves on virtual diagrams generalize the Reidemeister moves for classical knot and link
diagrams (Figure~\ref{Figure 1}).  The detour move is illustrated in 
Figure~\ref{Figure 2}.  The moves designated by (B) and (C) in Figure~\ref{Figure 1}, taken together, are equivalent to the detour move. Virtual knot and link diagrams that can be connected by a finite 
sequence of classical and detour moves are said to be {\it equivalent} or {\it virtually isotopic}.
A virtual knot is an equivalence class of virtual diagrams under these moves.
\smallbreak

\begin{figure}
     \begin{center}
     \begin{tabular}{c}
     \includegraphics[width=10cm]{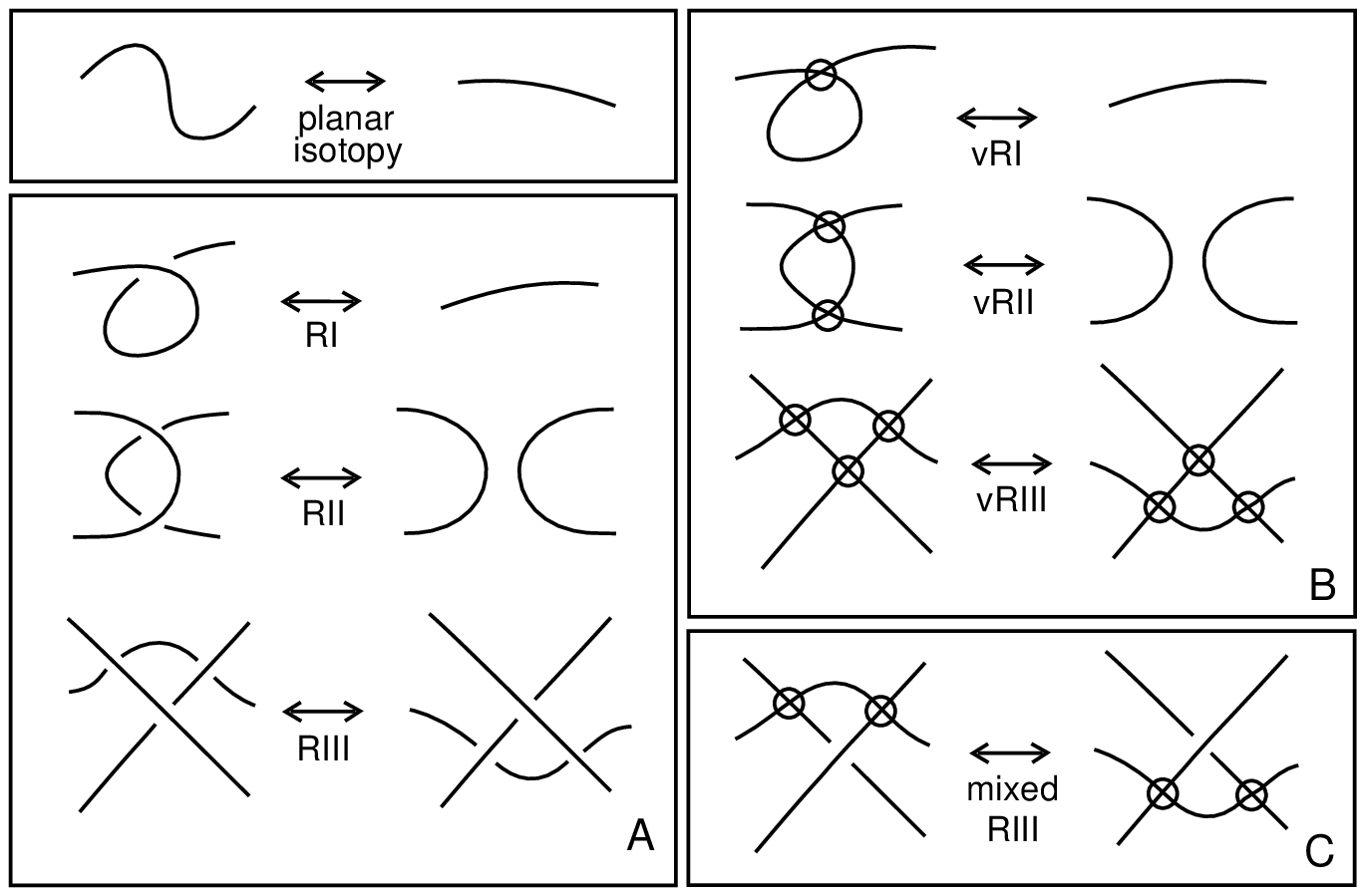}
     \end{tabular}
     \caption{\bf Moves}
     \label{Figure 1}
\end{center}
\end{figure}

\begin{figure}
     \begin{center}
     \begin{tabular}{c}
     \includegraphics[width=10cm]{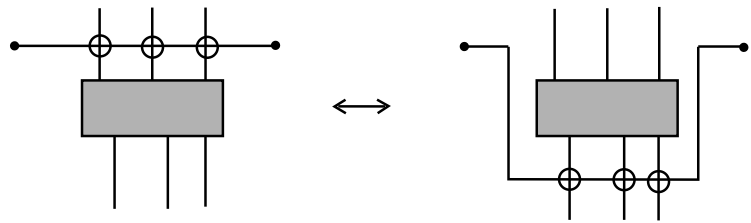}
     \end{tabular}
     \caption{\bf The Detour Move}
     \label{Figure 2}
\end{center}
\end{figure}

Virtual diagrams can be regarded as representatives for oriented Gauss codes \cite{GPV}, \cite{VKT,SVKT} 
(Gauss diagrams). Such codes do not always have planar realizations.   {\it Virtual isotopy is the same as the equivalence relation generated on the collection
of oriented Gauss codes by abstract Reidemeister moves on these codes.}   The reader can see this approach in \cite{DKT,GPV,MB}. It is of interest to know the least number of virtual crossings that can occur in a diagram of a virtual knot or link. If this virtual crossing number is zero, then the link is classical. For some results about estimating virtual crossing number see \cite{DyeKauff,ExtBr,MV} and see the results of Corollaries $3$ and $4$ in Section $3$ of
the present paper. In that section we not only count virtual crossings, we count combinatorial substructures of the diagram that may be unavoidable for a given invariant (in a sense that we specify later in the paper). Note that the breakthrough in the virtual crossing estimate was first achieved in \cite{MB} by applying graphical invariants.
\bigbreak

\begin{figure}
     \begin{center}
     \begin{tabular}{c}
     \includegraphics[width=5cm]{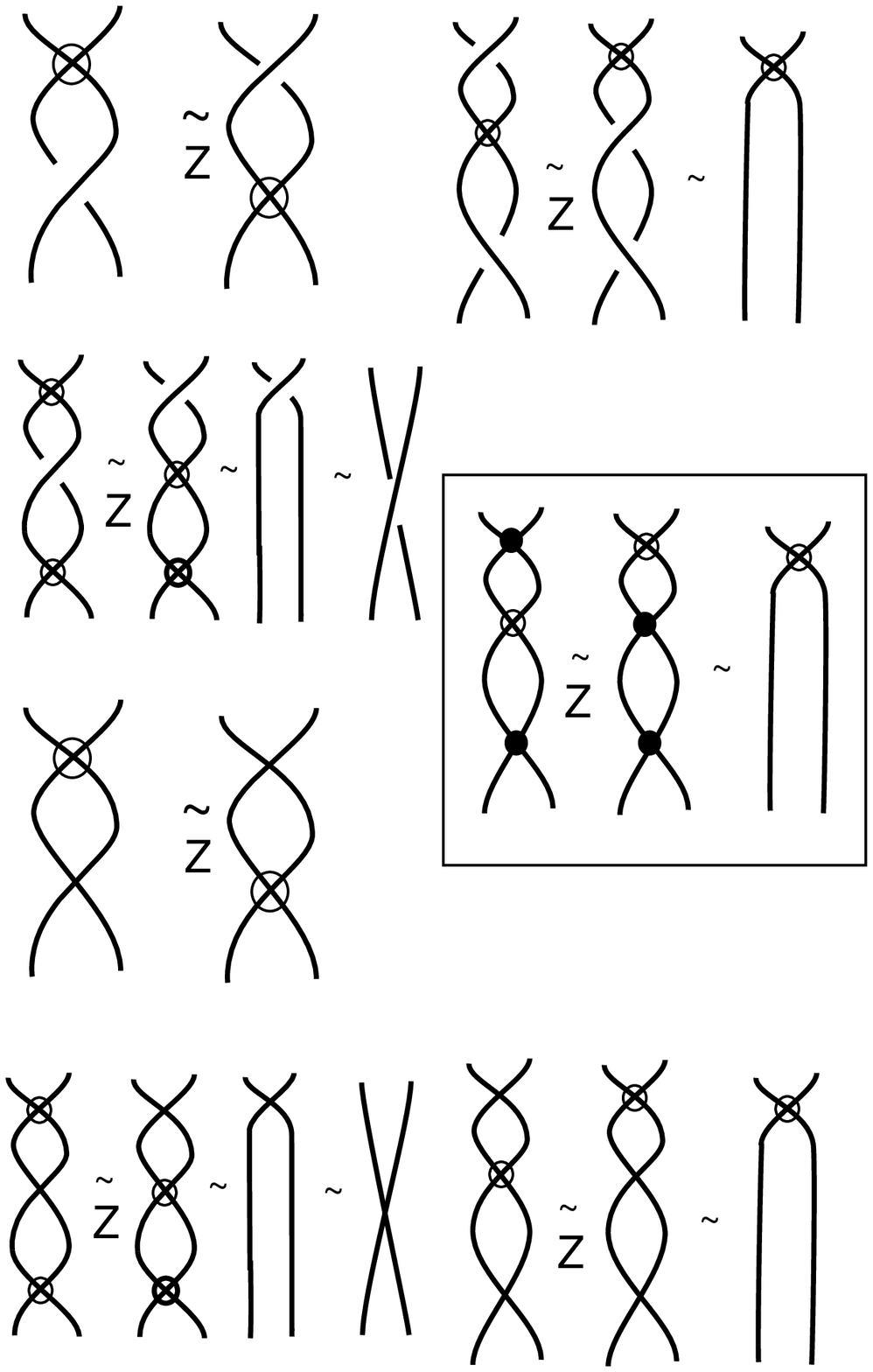}
     \end{tabular}
     \caption{\bf The Z-Move}
     \label{fig2}
\end{center}
\end{figure}

\noindent {\bf Flat Knots and Links.}
Every classical knot diagram can be regarded as a $4$-regular plane graph with extra structure at the 
nodes. Let a {\em flat virtual diagram} be a diagram with {\it virtual crossings} as we have
described them and {\em flat crossings} consisting in undecorated nodes of the $4$-regular plane graph, retaining the cyclic order at a node. Two flat virtual diagrams are {\em equivalent} if
there is a  sequence of generalized flat Reidemeister moves (as illustrated in Figure~\ref{Figure 1}) taking one to the other. A generalized
flat Reidemeister move is any move as shown in Figure~\ref{Figure 1} where one ignores the over or under crossing structure. The moves for flat virtual knots are obtained by taking Figure~\ref{Figure 1} and replacing all the classical crossings by flat (but not virtual) crossings.
In studying flat virtuals the rules for transforming only virtual crossings among themselves and the rules for transforming only
flat crossings among themselves are identical. Detour moves as in part C of Figure~\ref{Figure 1} are available for virtual crossings
with respect to flat crossings and {\it not} the other way around. 
\smallbreak

To each virtual diagram $K$ there is an associated 
flat diagram $F(K)$, obtained by forgetting the extra structure at the classical crossings in $K.$ 
We say that a virtual diagram {\em overlies} a flat diagram if the virtual diagram is obtained from the flat diagram by choosing a crossing type for each flat crossing in the virtual diagram. 
If $K$ and $K'$
are isotopic as virtual diagrams, then $F(K)$ and $F(K')$ are isotopic as flat virtual diagrams. Thus, if we can
show that $F(K)$ is not reducible to a disjoint union of circles, then it will follow that $K$ is a non-trivial  and non-classical virtual link.   
\smallbreak 

\begin{figure}
     \begin{center}
     \begin{tabular}{c}
     \includegraphics[width=8cm]{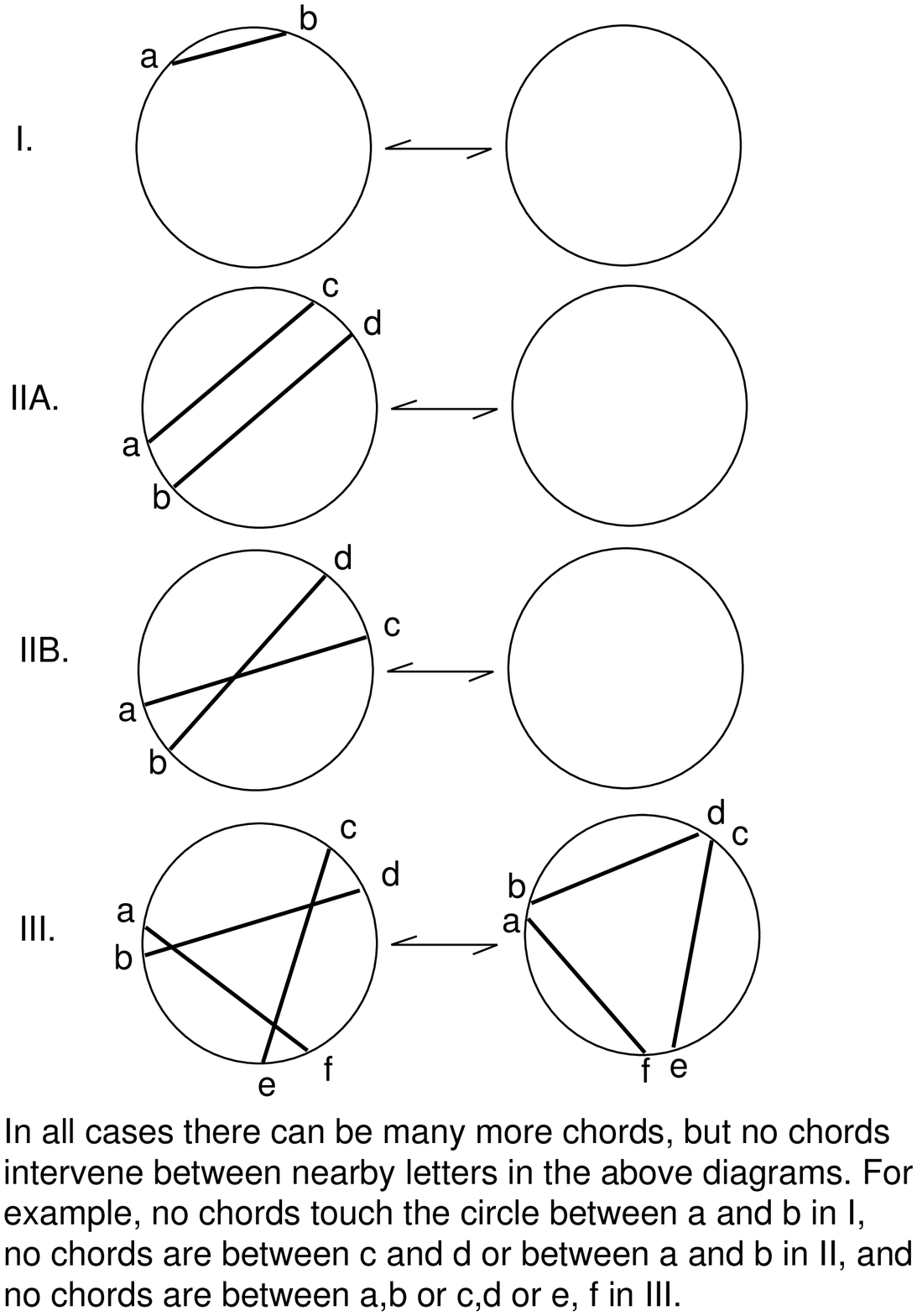}
     \end{tabular}
     \caption{\bf Reidemeister Moves on Chord Diagrams}
     \label{fig3}
\end{center}
\end{figure}

\begin{figure}
     \begin{center}
     \begin{tabular}{c}
     \includegraphics[width=8cm]{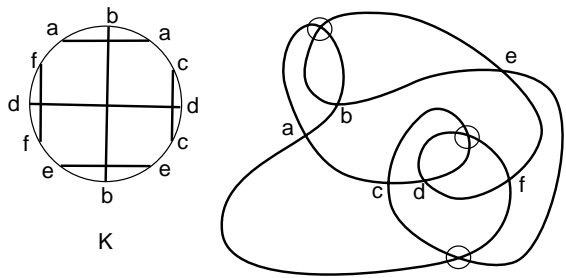}
     \end{tabular}
     \caption{\bf The Free Knot K - Chord Diagram and Virtual Diagram}
     \label{fig4}
\end{center}
\end{figure}

\begin{dfn}
A {\em virtual graph} is a $4-regular$ graph that is immersed in the plane giving a choice of cyclic orders at its nodes.  The edges at the nodes are connected according to the abstract definition of the graph and are embedded into the plane so that they intersect transversely. These intersections are taken as virtual crossings and are subject to the detour move just as in virtual link diagrams.  
We allow circles along with the graphs of any kind in our work with graph theory.
\label{vgraphs}
\end{dfn}

\noindent  {\bf Framed Nodes and Framed Graphs.}
We use the concept of a {\it framed $4$-valent node} where we only specify the pairings of {\it opposite edges} at the node. In the cyclic order, two  edges are said to be opposite if they are paired by skipping one edge as one goes around. If the cyclic order of a node is $[a,b,c,d]$ where these letters label the edges incident to the node, then we say that edges $a$ and  $c$ are {\it opposite}, and that edges
$b$ and $d$ are {\it opposite}.  We can change the cyclic order and keep the opposite relation.
For example, in $[c,b,a,d]$ it is still the case that the opposite pairs are $a,c$ and $b,d.$ A {\it framed $4$-valent graph} is a $4$-valent graph where every node is framed. When we represent a framed $4$-valent graph as an immersion in the plane, we use virtual crossings for the edge-crossings that are artifacts of the immersion and we regard the graph as a virtual graph. For an abstract framed $4$-valent graph, there are neither  classical crossings nor virtual crossings, only the framed nodes and their interconnections.
\smallbreak

\begin{dfn}
A  {\it component} of a framed graph is obtained by taking a walk on the graph so that the walk contains pairs of opposite edges from every node that is met during the walk. That is, in walking, if you enter a node along a given edge, then you exit the node along its opposite edge. Such a walk produces a cycle in the graph and such cycles are called the {\it components} of the framed graph. Since a link diagram or a flat link diagram is a framed graph, we see that the components of this framed graph are identical with the components of the link as identified  by the topologist. A framed graph with one component is said to be {\it unicursal}.  A circle with no nodes is considered as a graph; it is a single component. 
\end{dfn}

Flat knots (and links) are equivalence classes of virtual knots (and links).
Free knots (and links) are equivalence classes of free knots; thus, flat knots
occupy an intermediate position between virtual knots and free knots. It turns
out, however, that flat knots admit a reasonably simple classifications and
in most of the cases they have a unique minimal (w.r.t. flat crossing number)
diagram and in most of the cases there is a descending sequence to the minimal diagram.
For more details, see \cite{HS}. 
\bigbreak

     The reason is that minimal representatives of flat knots are homotopy
classes of curves in a given $2$-surface; virtual knots represent to {\em isotopy}
of curves in $3$-manifolds which is much more complicated than just
isotopy, and flat knots do not have an adequate geometrical representation.
Free knots are almost completely classified; in the present paper we give yet
another invariant which fills some classification gaps.
Thus, we draw our attention mostly to virtual knots and free knots. 
Flat and free knots play a very important role for understanding parities
and other graphical properties of virtual knots.
\bigbreak

\subsection{Free Knots}
\begin{dfn}
{\it Free knots} are equivalence classes of unoriented, unlabeled Gauss codes with the equivalence relation generated by the abstract Reidemeister moves for Gauss codes, as illustrated in 
Figure~\ref{fig3}. Figure~\ref{fig4} illustrates a non-trivial free knot and its corresponding virtual diagram.
Diagrams for free knots are formed just as we form diagrams for virtual knots and flat virtual knots. However, since we do not make any assumptions about cyclic orientation at nodes for the free knot, the virtual diagrams are only defined up to the framing of the classical flat nodes in the diagram in the sense of the discussion above. This means that the free knots can be modified by interchanging a classical flat crossing with an adjacent virtual crossing as shown in  Figure~\ref{fig2}. This interchange is called the 
$Z$-move.  Thus free knots are the same as framed $4$-valent graphs taken up to the flat
Reidemeister moves, and this is the same as flat virtual knots modulo the 
$Z$-move. 
\end{dfn}

Free knots are the most fundamental combinatorial structure underlying virtual knot theory.
If we forget all the structure about a virtual knot except its underlying free knot, then the non-triviality of the free knot (if it is non-trivial) will imply that the virtual knot is also non-trivial. 

Free knots, implicit as they are in the concept of flat virtuals and Gauss codes, were defined by Turaev
\cite{VST,Words} sometime after the concepts of virtual knots and flat virtual knots had been articulated.
It was not at first obvious that free knots could be non-trivial. In fact, Turaev had conjectured that all free knots were trivial in \cite{Words}.  As we shall see in the next section, it can be a subtle matter to prove that a free knot is non-trivial. A nice breakthrough in understanding this subject was made by Manturov when he discovered the role of parity in identifying 
non-trivial free knots. View again Figure~\ref{fig4} and note that every chord has an odd number of intersections with the other chords. As we shall see in the next section, this can be used to show that this is a non-trivial free knot.
\bigbreak

In the $Z$-move one can intechange a crossing with an adjacent virtual crossing even in the category of 
virtual knots and links.  We call virtual knots and links modulo the $Z$-move, $Z$-{\it knots}. At this writing 
we do not know if classical knots and links embed in $Z$-knots.
\bigbreak

\begin{dfn}
We say that a {\it virtualization move} has been performed on a crossing if it is flanked by two virtual crossings. We illustrate this operation in Figure~\ref{fig2} and show that virtualization does not change the equivalence class of a flat diagram under the $Z$-move. This means that any invariant of free knots must be invariant under virtualization.
\end{dfn}

\section{Parity in Knot Theory and Virtual Knot Theory}
This section discusses the use of parity in knot theory, virtual knot theory and particularly in the 
theory of free knots. See \cite{MB,MP,IMN,Projection} for recent work in this area. As we mentioned in the last section, free knots were defined by Turaev
\cite{VST,Words}.
It was not at first obvious that free knots could be non-trivial. A breakthrough in understanding this subject was made by Manturov when he discovered the role of parity in identifying 
non-trivial free knots. This breakthrough was not just in recognizing that virtuality often entails parity. 
We already knew that. The key realization is that through parity it is often possible to identify key combinatorial structures in a diagram or even to identify the whole diagram as such a structure, so that the object whose invariant one wants to compute becomes {\it itself} the invariant. 
It is also important that we localize this non-triviality in concrete crossings. We will illustrate this them with the Manturov Parity Bracket after an initial discussion of the odd writhe of a virtual knot.
\bigbreak

\subsection{The Odd Writhe}
\begin{figure}
     \begin{center}
     \begin{tabular}{c}
     \includegraphics[width=8cm]{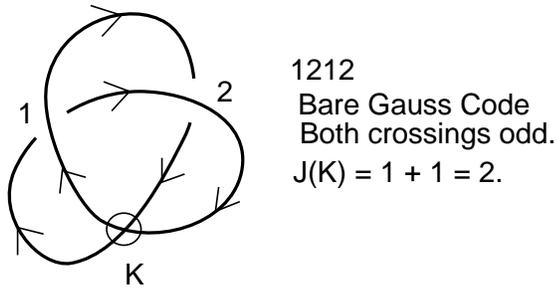}
     \end{tabular}
     \caption{\bf Example of Odd Writhe, J(K).}
     \label{oddwr}
\end{center}
\end{figure}

\begin{figure}
     \begin{center}
     \begin{tabular}{c}
     \includegraphics[width=8cm]{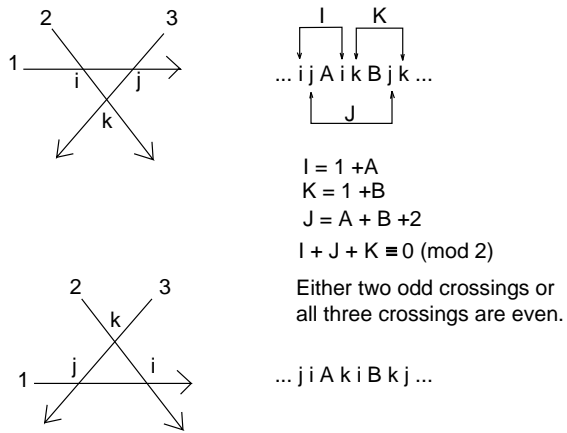}
     \end{tabular}
     \caption{\bf Gaussian Parities at the Third Reidemeister Move.}
     \label{gaussian}
\end{center}
\end{figure}

\noindent {\bf The Odd Writhe J(K).} In Figure~\ref{oddwr}  we show that the virtual knot $K$ is not trivial, not classical and not equivalent to its mirror image by computing its 
{\it odd writhe}  \cite{SL}.  The odd writhe \cite{SL}, $J(K),$ is the sum of the signs of the odd crossings. A crossing
in a knot diagram  is {\it odd} if it flanks an odd number of symbols in the Gauss code of the diagram. 
We call this {\it Gaussian parity} to distinguish it from other parities that can be defined for virtual diagrams. All crossings in a classical knot diagram are even.
Hence classical diagrams have zero odd writhe. In the figure, the flat Gauss code for $K$ is $1212$ with both  crossings odd. Thus we see that $J(K) = 2$ for the knot in the figure.  One proves that 
\begin{enumerate}
\item $J(K)$ is an isotopy invariant of virtual knots, 
\item $J(K^{*}) = -J(K)$ when $K^{*}$ is the mirror image of $K$ (obtained by switching all the crossings), 
\item $J(K) = 0$ when $K$ is isotopic to a classical knot. 
\end{enumerate}
The odd writhe is the simplest application of parity in virtual knot theory.
\bigbreak

Figure~\ref{gaussian} shows why the odd writhe is an invariant of virtual knots by focusing on the third
Reidemeister move and the parity of the three crossings involved in that move. We have indicated three crossings by the lowercase letters $i,j,k$ and have used the capital letters $I,J,K$ to indicate the number of symbols in the Gauss code between two appearances of the corresponding lowercase letter. The diagram in the Figure shows that $I+J+K$ is necessarily an even number. Thus, either two of the crossings at the third move are odd, or all of the crossings are even. Furthermore it follows from the figure that the crossings labeled $i,j,k$ in the before and after figures for the Reidemeister move have the same parity. It is then easy to see, using this information, that $J(K)$ is invariant under virtual isotopy. We will refer to this figure again in the next subsection where we discuss the Parity Bracket.  \smallbreak

\subsection{The Parity Bracket}
In this section we introduce the Manturov Parity Bracket  \cite{MP}.
This is a form of the bracket polynomial defined for virtual knots and for free knots that uses the parity of the crossings. To compute the parity bracket, we first make all the odd crossings into graphical vertices. Then we expand the resulting diagram on the remaining even crossings. The result is a sum of graphs with polynomial coefficients. 
\bigbreak

More precisely, let $K$ be a virtual knot diagram. Let $E(K)$ denote the result of making all the odd crossings in $K$ into graphical nodes as illustrated in  Figure~\ref{pbracket} .
Let $SE(K)$ denote the set of all bracket states of $E(K)$ obtained by smoothing each classical crossing in $E(K)$ in one of the two possible ways. Then we define the {\it parity bracket} 
$$<K>_{P} = (1/d)\Sigma_{S \in SE(K)} A^{i(S)} [S]$$ where $d=-A^2 - A^{-2}$, $i(S)$ denotes the 
product of $A$ or $A^{-1}$ from each smoothing site according to the conventions of  Figure~\ref{pbracket}, and $[S]$ denotes the reduced class of the virtual graph $S.$  The graphs are subject to a reduction move that eliminates bigons as in the second Reidemeister move on a knot diagram as shown in Figure~\ref{pbracket}.  Thus $[S]$ represents the unique minimal representative for the virttual graph $S$ under virtual graph isotopy coupled with the bigon reduction move. A graph that reduces to a circle (the circle is a graph for our purposes) is replaced by the value $d$ above. Thus $<K>_{P}$ is an element of a module  generated by reduced graphs with  coefficients Laurent polynomials in $A.$.
\bigbreak

With the usual bracket polynomial variable $A$, the parity bracket is an invariant of standard virtual knots. With $A=\pm 1$ it is an invariant of flat virtual knots. Even more simply, with $A=1$ and taken modulo two, we have an invariant of flat knots with loop value zero. See Figure~\ref{kishino} for an illustration of the application of the parity bracket to the Kishino diagram illustrated there. The Kishino diagram is notorious for being hard to detect by the usual polynomial invariants such as the Jones polynomial. It is a perfect example of the power of the parity bracket. All the crossings of the Kishino diagram are odd. Thus there is eactly one term in the evaluation of the Kishino diagram by the parity bracket, and this term is the Kishino diagram itself, with its crossings made into graphical nodes. The resulting graph is irreducible and so the Kishino diagram becomes its own invariant. We conclude that this diagram will be found from any isotopic version of the Kishino diagram. This allows strong conclusions about many properties of the diagram. For example, it is easy to check that the least surface on which this diagram can be represented with the given planar cyclic orders at the nodes) is genus two. Thus we conclude that the least genus for a surface representation of the Kishino diagram as a flat knot or virtual knot is two.
\bigbreak

Two virtual knots or links that are related by a $Z$-move have the same standard bracket polynomial. This follows directly from our discussion in the previous section. We would like to analyze the structure of $Z$-moves using the parity
bracket. In order to do this we need a version of the parity bracket that is invariant under the Z-move.
In order to accomplish this, we need to add a corresponding Z-move in the graphical reduction process for the parity bracket. This extra graphical reduction is indicated in Figure~\ref{fig2} where we show a graphical $Z$-move. The reader will note that graphs that are irreducible without the graphical $Z$-move can become reducible if we allow graphical $Z$-moves in the reduction process. For example, the graph associated with the Kishino knot is reducible under graphical $Z$-moves. However, there are examples of  graphs that are not reducible under graphical $Z$-moves and Reidemister two moves.
To obtain invariants of free knots using the parity bracket, we adopt the graphical $Z$-move, since we need this invariance for free knots.  
\bigbreak

\subsection{The Parity Bracket for Free Knots}
The rest of this section is devoted to the theory of the parity bracket for free knots.
Much of this material appears in \cite{Sbornik1,FirstFree} and some of it \cite{Graph}.
Some terminology will be useful: We refer to the {\it graph} of a free knot as the intersection graph of its chord diagram. That is, the nodes of the graph are in $1-1$ correspondence with the chords of the chord diagram, and two nodes are joined by an edge when the two chords intersect in the chord diagram.
 If a graph $\Gamma$ is the intesection graph for the chord diagram of a free knot, we say that the free knot is {\it generated} by  $\Gamma$. The reader should note that intersection graphs have neither loops nor bigons.
 In Figure~\ref{irred} we illustrate the relationship of an intersection graph and its chord diagram.
 \smallbreak
 
 \begin{dfn}
We call a free knot diagram {\it odd} if all of the nodes in its intersection graph have odd degree. (This is the same as saying that every crossing has odd Gaussian parity). We call a simple (no more than one edge between any two nodes) graph  
{\it irreducibly odd} if it is odd  and for every pair of nodes $u$ and $v$ there is a third node $w$ that is 
adjacent to exactly one of $u$ and $v.$ 
\end{dfn}

\begin{figure}
     \begin{center}
     \begin{tabular}{c}
     \includegraphics[width=6cm]{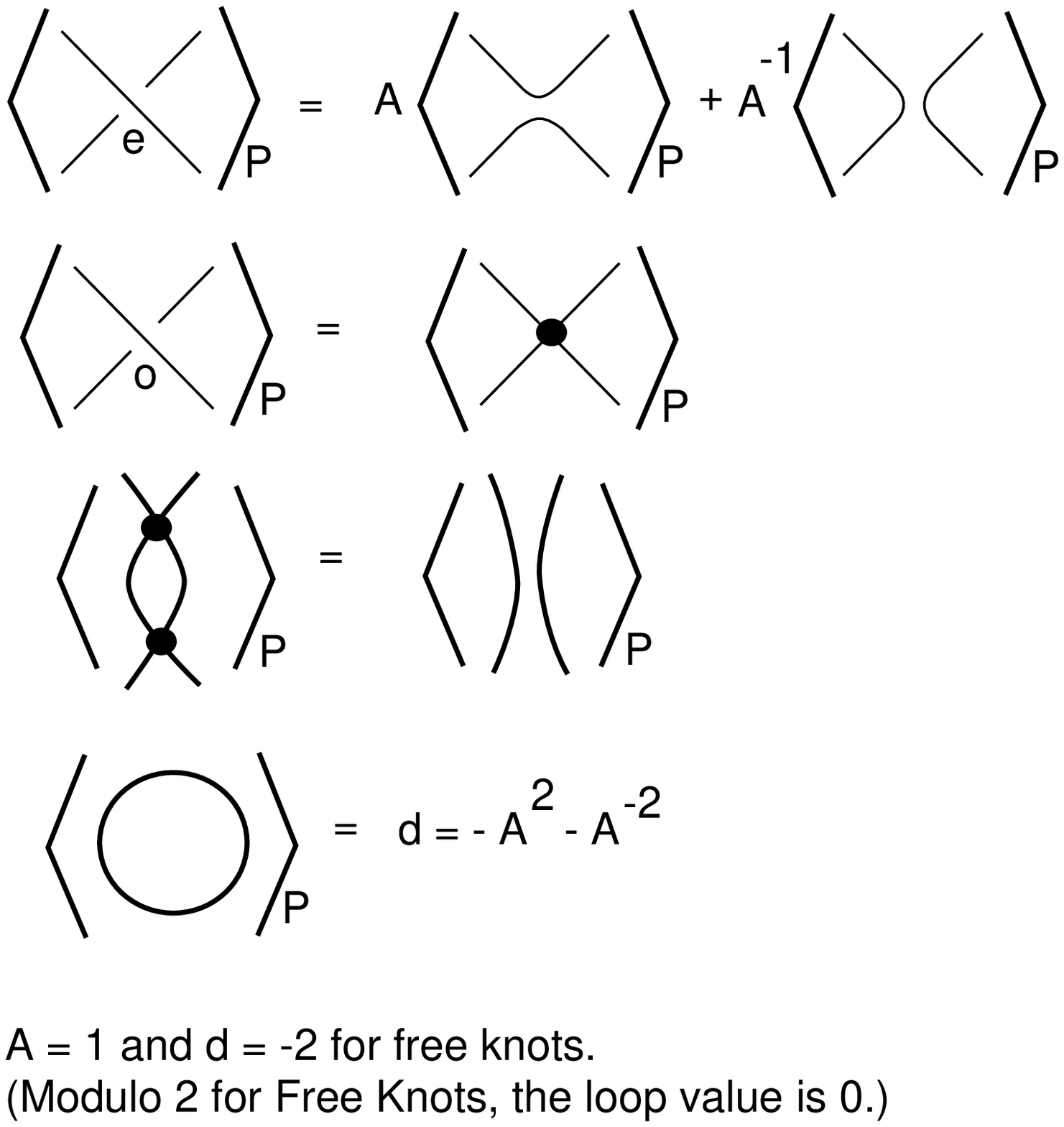}
     \end{tabular}
     \caption{\bf Parity Bracket Expansion}
     \label{pbracket}
\end{center}
\end{figure}

\begin{figure}
     \begin{center}
     \begin{tabular}{c}
     \includegraphics[width=6cm]{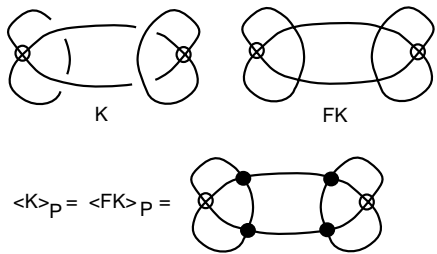}
     \end{tabular}
     \caption{\bf Parity Bracket Detects the Kishino Diagram}
     \label{kishino}
\end{center}
\end{figure}

\begin{figure}
\centering\includegraphics[width=150pt]{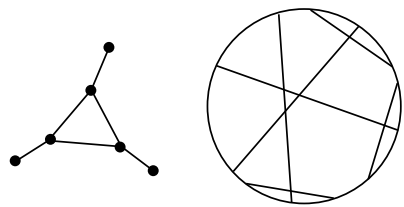} \caption{\bf An
irreducibly odd graph and its chord diagram} \label{irred}
\end{figure}

The simplest example of an irreducibly odd graph is depicted in 
Figure~\ref{irred}.
Assume an irreducibly odd graph $G$ generates a free knot $K$.
It turns out that the representative $G$ of the knot $K$ is indeed {\em
minimal}: any other representative of $K$ has the number of vertices
at least as many as those of $G$.
To this end, we shall describe a powerful invariant \cite{MP} that captures
graphical information about the free knot; but first we should introduce
some notation.
\smallbreak

\newcommand{\ZG}{\Z_{2}{\mathfrak{G}}}
Let ${\mathfrak{G}}$ be the set of all equivalence classes of framed
 graphs with one unicursal component modulo second Reidemeister moves.
Consider the linear space $\ZG$.
Let $G$ be a framed graph, let  $v$ be a vertex of $G$ with four
incident half-edges $a,b,c,d$, s.t.  $a$ is opposite to $c$ and $b$
is opposite to $d$ at $v$.
By {\em smoothing} of $G$ at $v$ we mean any of the two framed
$4$-regular graphs obtained by removing $v$ and repasting the edges as
$a-b$, $c-d$ or as $a-d,b-c$, see Figure~\ref{smooth}.
\smallbreak

\begin{figure}
\centering\includegraphics[width=100pt]{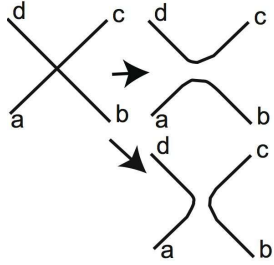} \caption{\bf Two
smoothings of a vertex of for a framed graph} \label{smooth}
\end{figure}

Herewith, the rest of the graph (together with all framings at
vertices except $v$) remains unchanged.
We may then consider further smoothings of $G$ at {\em several}
vertices.
Consider the following sum

\begin{equation}
[G]=\sum_{s\;even,1\; comp} G_{s},
\end{equation}
which is taken over all smoothings in all {\em even} vertices, and
only those summands are taken into account where $G_{s}$ has one
unicursal component. This is the mod-$2$ parity bracket, where the loop value for more than one loop in  state is zero. {\it Note that we do not make non-virtual odd nodes in the diagram into special graphical vertices. We just do not smooth them, but they are still subject to the $Z$-move and to reduction in the form of a second Reidemeister move. Thus in this state summation the remaing nodes take the role of the graphical vertices we previously described for the parity bracket.}
\smallbreak

Thus, if  $G$ has $k$ even vertices, then $[G]$ will contain at most
$2^{k}$ summands, and if all vertices of $G$ are odd, then we shall
have exactly one summand, the graph $G$ itself.
Consider  $[G]$ as an element of $\ZG$. In this case it is evident that if all vertices of $G$ are odd then
 $[G]=G$ (here $G$ is taken as a single graph to be reduce by R-2 moves). 
\smallbreak

\begin{thm}
If $G$ and $G'$ represent the same free knot then in $\ZG$ the
following equality holds: $[G]=[G']$.\label{mainthm}
\end{thm}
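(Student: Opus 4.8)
The plan is to verify that $[\,\cdot\,]$ is unchanged under each of the moves that generate the free-knot equivalence relation, namely the flat first, second and third Reidemeister moves on framed $4$-valent graphs (the $Z$-move acts trivially, since the bracket depends only on the opposite-edge pairings at the nodes, i.e.\ on the framed graph itself). Because the target $\ZG$ is already taken modulo second Reidemeister reductions and all coefficients are reduced mod $2$, this amounts to a finite list of local checks. Before doing them I would record the parity bookkeeping on which everything rests. First, a crossing not participating in a move keeps its Gaussian parity, since each move alters the Gauss code of such a crossing by inserting or deleting an even number of symbols between its two occurrences. Second, the crossings created or destroyed by a move have a constrained parity profile: an R1 kink is an isolated chord, hence even; the two crossings of an R2 move flank the same chords, hence share a common parity; and for an R3 move the relation $I+J+K\equiv 0 \pmod 2$ from Figure~\ref{gaussian} forces the number of odd crossings among the three to be even, i.e.\ either all three are even or exactly two are odd. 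Recall also that the state sum smooths only the even vertices and freezes the odd ones as graphical nodes.

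For R1, the kink crossing is even and is therefore smoothed in two ways: one smoothing splits off a disjoint circle, producing a state with more than one component, which is discarded by the one-component condition; the other yields precisely the kink-removed diagram. Hence R1 replaces each state by its reduced counterpart and preserves $[G]$. For R2 I would split on the common parity of the two crossings. If both are odd they remain frozen and form a bigon of graphical nodes; this bigon is an R2 configuration and is therefore killed by the R2 quotient defining $\mathfrak{G}$, giving a parity-preserving bijection between the surviving states on the two sides. If both are even they are both smoothed into four local resolutions: the parallel resolution reproduces the R2-reduced diagram; the doubled cap-cup resolution creates an extra closed component and is discarded; and the two mixed resolutions produce the identical graph in $\mathfrak{G}$ and hence cancel modulo $2$. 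In either case $[G]=[G']$.

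The third Reidemeister move is the crux, and by the parity profile there are only two cases. If all three crossings are even, all three are smoothed, and invariance follows from R2 invariance exactly as in the standard argument that an R2-invariant bracket is automatically R3-invariant: the eight local resolutions before the move are matched with the eight after by planar isotopy together with bigon cancellation. The genuinely new case is when exactly two crossings are odd. Here the two odd crossings are frozen as graphical vertices and, by preservation of parity under the move, remain frozen afterwards; only the single even crossing is expanded into its two smoothings. One then matches the two states before the move with the two after it, vertex for vertex, in $\mathfrak{G}$, and upgrades this local identity to the global equality by summing over the smoothings of the remaining (untouched) even vertices.

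The main obstacle I anticipate is precisely this last matching. Because $\mathfrak{G}$ is taken modulo second Reidemeister moves \emph{only}, the two frozen vertices may not be rearranged by a graphical third Reidemeister move; the before- and after-states must be identified using bigon reductions alone. Verifying that the two smoothings of the even crossing produce configurations coinciding after such R2 reductions --- while tracking the one-component condition so that the discarded multi-component states correspond on both sides --- is exactly where the two-odd parity hypothesis does its work, and it is the step I would treat most carefully. Once this local identity is established for every placement of the odd pair relative to the even crossing, summing over all smoothings of the other even vertices yields $[G]=[G']$ in $\ZG$.
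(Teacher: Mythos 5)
The paper does not actually prove this theorem: it is stated and then used, with the proof deferred to the parity-bracket papers \cite{MP,Sbornik1}, so there is no in-text argument to compare yours against. What you have written is a faithful reconstruction of the standard proof from those sources: reduce to invariance under the flat Reidemeister moves on framed $4$-graphs (the $Z$-move being invisible because only the framing is recorded), establish the parity bookkeeping (spectator crossings keep their Gaussian parity; an R1 crossing is even; the two R2 crossings share a parity; R3 forces all-even or exactly-two-odd), and then run the case analysis with mod-$2$ cancellation and the one-component filter. All of those steps are correct as you state them.

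The one place you stop short is the matching in the two-odd R3 case, which you rightly identify as the crux. For completeness: with the even crossing of the triangle smoothed one way, the two arcs of the triangle incident to it join into a single arc running between the two frozen odd vertices, so those vertices bound a bigon; the graphical second Reidemeister reduction in $\ZG$ deletes them, and the same happens on the other side of the move, leaving identical framed graphs. With the even crossing smoothed the other way, the turnback arc sits on one side of the third strand before the move and on the other side after it; these two pictures differ only in their planar immersion, hence are the \emph{same} abstract framed graph, which is all that $\ZG$ remembers. Since both reductions preserve the number of unicursal components, the discarded multi-component states also correspond, and the local identity sums to $[G]=[G']$. With that verification spelled out, your argument is complete and is essentially the proof the paper is implicitly relying on.
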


Theorem \ref{mainthm} yields the following
\begin{crl}
Let  $G$ be an irreducibly odd framed 4-graph with one unicursal
component. Then any representative $G'$ of the free knot
$K_{G}$,generated by $G$, has a smoothing  $\tilde G$ having the
same number of vertices as $G$.  In particular,  $G$ is a minimal
representative of the free knot $K_{G}$ with respect to the number
of vertices.\label{sld}
\end{crl}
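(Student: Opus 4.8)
The plan is to deduce Corollary~\ref{sld} directly from Theorem~\ref{mainthm} by exploiting the special structure of the parity bracket when $G$ is irreducibly odd. First I would observe that because $G$ has all vertices odd, the state sum defining $[G]$ collapses: there are no even vertices to smooth, so the bracket has exactly one summand and $[G]=G$ as an element of $\ZG$ (this is precisely the closing remark before the theorem). Thus $G$ is its own parity bracket, reduced only by $R$-$2$ moves.

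Next I would take an arbitrary representative $G'$ of the same free knot $K_G$ and apply Theorem~\ref{mainthm} to conclude $[G']=[G]=G$ in $\ZG$. The key point is to read off what $[G']=G$ means combinatorially. The bracket $[G']$ is a $\Z_2$-linear combination of framed graphs obtained by smoothing the even vertices of $G'$ and keeping only one-component states, each reduced by second Reidemeister moves. For this sum to equal the single nonzero class $G$ in $\ZG$, at least one of those one-component smoothed states, call it $\tilde G$, must reduce (under $R$-$2$ moves) to the class of $G$. I would then argue that an irreducibly odd graph admits no nontrivial $R$-$2$ reduction: the irreducibly-odd condition (for every pair of nodes $u,v$ there is a $w$ adjacent to exactly one of them) is exactly what rules out the bigons/adjacent parallel structures that an $R$-$2$ move would remove, so $G$ is already reduced and any state reducing to its class has at least as many vertices as $G$.

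From here the vertex-count conclusion follows by tracking how smoothing and $R$-$2$ reduction affect the number of vertices. Smoothing never increases the vertex count, and $R$-$2$ reduction strictly decreases it; since $\tilde G$ reduces to $G$, and $G$ is already $R$-$2$-irreducible with $|V(G)|$ vertices, the state $\tilde G$ must have at least $|V(G)|$ vertices, while being a smoothing of $G'$ it has at most $|V(G')|$ vertices. Hence $|V(G')|\ge |V(\tilde G)|\ge |V(G)|$, giving both the existence of the desired smoothing $\tilde G$ and the minimality of $G$.

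I expect the main obstacle to be the second step: making rigorous the claim that an irreducibly odd graph is $R$-$2$-irreducible, i.e.\ that the combinatorial irreducibility hypothesis genuinely prevents every bigon-reduction in $\ZG$. This requires translating the node-adjacency condition on the intersection graph into a statement about the framed $4$-valent graph and checking that no pair of edges forms a reducible bigon; the parity information (all vertices odd, so that the surviving term in $[G']$ is forced to carry the whole class) is what guarantees $\tilde G$ is nonzero and equal to $G$ rather than cancelling in characteristic two, and this cancellation-avoidance is the delicate point to verify carefully.
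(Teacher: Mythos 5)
Your proposal is correct and follows essentially the same route as the paper: both use $[G]=G$ for an irreducibly odd diagram, invoke Theorem~\ref{mainthm} to get $[G']=G$ in $\ZG$, and conclude that some one-component smoothing of $G'$ is $R$-$2$-equivalent to $G$, whence the vertex count. The only difference is one of emphasis: you make explicit the need to verify that an irreducibly odd graph admits no decreasing $R$-$2$ move, a point the paper leaves implicit in its definition of irreducible oddness and its lemma on unique $R$-$2$-minimal representatives.
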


It turns out that elements from $\ZG$ are easily encoded by their
minimal representatives. More precisely, the following lemma holds.
\smallbreak

\begin{lem}
Every $4$-valent framed graph $G$ with one unicursal component
considered as an element of $\ZG$ has a unique irreducible
representative, which can be obtained from $G$ by consequtive
application of second decreasing Reidemeister moves.
\end{lem}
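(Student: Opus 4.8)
The plan is to view decreasing second Reidemeister moves as a rewriting system on framed $4$-valent graphs with one unicursal component and to prove that this system is terminating and confluent; Newman's Lemma then yields a unique normal form, which is precisely the asserted unique irreducible representative.

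First I would establish termination. A decreasing second Reidemeister move deletes a bigon and therefore removes exactly two vertices, so the number of vertices strictly decreases along any chain of such moves. Since this number is a nonnegative integer, every chain starting from $G$ has length at most half the number of vertices of $G$ and hence stops at a graph admitting no further decreasing move, i.e.\ a bigon-free (irreducible) graph. This proves existence of irreducible representatives reachable from $G$ by decreasing moves, and along the way I would check that the single-component condition is preserved under the move (reconnecting the four external half-edges of a bigon keeps the strand passing through, so unicursality is not destroyed), so that the rewriting stays inside $\mathfrak{G}$.

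The heart of the argument is local confluence: whenever $G \to G_{1}$ and $G \to G_{2}$ are single decreasing moves removing bigons $B_{1}$ and $B_{2}$, the graphs $G_{1}$ and $G_{2}$ must possess a common reduct. I would argue by how $B_{1}$ and $B_{2}$ meet. If they are disjoint, the two moves act on non-overlapping parts of the graph, so they commute and $G_{1}$ and $G_{2}$ both reduce to the graph obtained by removing both bigons. If $B_{1}$ and $B_{2}$ share a vertex or an edge, I would perform the finite case analysis of the local pictures, keeping track of the framing (the opposite-edge pairing at each node). In each overlapping configuration one checks that after removing $B_{1}$ a bigon again remains---possibly a newly created one---whose removal produces a framed graph that coincides, up to the evident reconnection, with the one obtained symmetrically starting from $B_{2}$. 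Granting local confluence, Newman's Lemma upgrades it to global confluence, so all maximal decreasing chains from $G$ end at the same irreducible graph. Finally, since two graphs equal in $\ZG$ are related by a sequence of second Reidemeister moves in both directions, reversing each increasing move into a decreasing one and invoking confluence shows the normal form is constant on the whole equivalence class; hence the irreducible representative is unique, completing the proof.

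I expect the overlapping-bigon case analysis in the confluence step to be the main obstacle: the degenerate configurations---where the two bigons share an edge, or where the outer endpoints of a bigon's edges coincide and threaten to pinch off a loop---require careful bookkeeping of the framings and of the unicursal structure to verify that a genuine common reduct exists rather than two distinct irreducible graphs.
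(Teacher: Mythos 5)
The paper actually states this lemma without proof --- the \verb|proof| environment that follows it in the source belongs to the Corollary about irreducibly odd graphs, not to the lemma --- so there is no argument of record to compare yours against. Judged on its own terms, your strategy (termination by vertex count, local confluence of the bigon-removal rewriting system, Newman's Lemma, then transport of the normal form across increasing moves to get constancy on the whole $R2$-equivalence class) is the standard and correct way to prove such a statement, and the last step is exactly what upgrades ``unique endpoint of decreasing chains'' to ``unique irreducible representative in $\mathfrak{G}$.'' Two remarks on the case analysis you rightly flag as the crux. First, the overlaps are more benign than you fear, precisely because of unicursality: if two distinct bigons shared an edge $e$ between vertices $u,v$, then at each of $u,v$ the half-edge of $e$ would have to be non-opposite to both other bigon edges, forcing those two edges to be opposite to each other at both $u$ and $v$; they would then form a closed two-edge unicursal cycle, i.e.\ a second component, so this configuration cannot occur in $\mathfrak{G}$. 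The same mechanism excludes the ``pinch off a free circle'' degeneracy (a bigon whose two reconnected external half-edges are the two ends of a single edge already forces an extra component in the original graph), which simultaneously shows that decreasing moves never leave $\mathfrak{G}$. Second, when two bigons share exactly one vertex, the two one-step reducts are not merely joinable after further moves --- they are canonically isomorphic framed graphs, since both deletions produce the same surviving vertex with the same four far endpoints and the same induced opposite-edge pairing; so local confluence closes there in zero additional steps. With these observations (plus the trivial disjoint case and the four-edges-between-two-vertices case, where both reductions yield a single circle), your outline completes to a full proof.
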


This allows one to recognize elements $\ZG$  easily, which makes the
invariants constructed in the previous subsection digestable.
In particular, the minimality of a framed $4$-regular graph in $\ZG$  is
easily detectable: one should just check all pairs of vertices and
see whether any of them can be cancelled by a second Reidemeister
move (or in $\ZG$ one should also look for free loops). Create all minimal representatives in this way, and then compare them. Indeed, we see that two graphs are R-2-equivalent if their minimal representatives coincide.
\begin{proof}[Proof of the Corollary]
By definition of $[G]$ we have $[G]=G$. Thus if $G'$ generates the
same free knot as $G$ we have $[G']=G$ in $\ZG$.
Consequently, the sum representing $[G']$ in $\mathfrak{G}$ contains
at least one summand which is $a$-equivalent 
($a$-equivalent  means equivalent by Reidemeister two-moves only) to $G$. 
Thus $G'$ has
at least as many vertices as $G$ does.
Moreover, the corresponding smoothing of $G'$ is a diagram, which is
 $a$-equivalent to $G$. One can show that under some (quite natural)
``rigidity'' condition this will yield that one of smoothings of
$G'$ coincides with $G$.
\end{proof}

The use of parity gives this bracket considerable power. For example, consider the diagram in Figure~\ref{fig4}. We see that all the crossings in this free knot are odd. Thus the parity bracket is just the the diagram itself seen as a graph and reduced by $2$-moves. The reader can check that this graph is in fact irreducible in the free sense (see definition above) and the invariant of this free knot is the diagram itself as an irreducible graph. This not only means that this free knot is non-trivial, it also means that any diagram of this free knot will have states that reduce to the diagram in Figure~\ref{fig4}. 
\smallbreak

Parity is clearly an important theme in virtual knot theory and will figure in many future investigations of this subject. The type of construction that we have indicated for the bracket polynomial in this section
can be varied and applied to other invariants. Furthermore the notion of describing a parity for crossings
in a diagram is also susceptible to generalization. For more on this theme the reader should consult
\cite{MP,IMN,Projection,MV} and \cite{SL} for a use of parity for another variant of the bracket polynomial.
\smallbreak

\section{Construction of the Main Invariants}
In this section we shall describe the details of the construction of the $sl(3)$ and $G_{2}$ invariants.
The $so(3)$ invariant has been treated in \cite{AC} and we will not repeat the construction details here.
In all cases of these invariants, the work of verification involves checking consistency of the graphical expansions in situations where two polygons that are subject to expansion share an edge or edges. In our combinatorial approach, such situations must be calculated in detail. In some cases we show these detailed calculations in the paper. In other cases we give the reader instructions for doing the calculation. The purpose of this section is to organize these matters for the reader of the paper.
It is clear to us that a less calculational approach to the theory of these invariants would be preferable, and we will explore that theory in a separate paper.
\bigbreak.

\subsection{The Kuperberg  $sl(3)$ Bracket}

Let ${\cal S}$ be the collection of all trivalent bipartite graphs
with edges oriented from vertices of the first part to vertices of
the second part of the bipartite division of the graph.
Let ${\cal T}=\{t_{1},t_{2},\cdots\}$ be the (infinite) subset of
connected graphs from ${\cal S}$ having neither bigons nor quadrilaterals.
Let ${\cal M}$ be the module $\Z[A,A^{-1}][t_{1},t_{2},\cdots]$ of
formal commutative products of graphs from
${\cal T}$ with coefficients that are Laurent polynomials in one variable $A.$
Disjoint unions of graphs  are treated as products in ${\cal M}$.
Our main invariant will be valued in ${\cal M}$.
\smallbreak

\begin{st}
Figure~\ref{kupbra} shows the reduction moves for the Kuperberg $sl(3)$ bracket. The last three lines of the figure will be called the {\it relations} in that figure.
There exists a unique map $f:{\cal S}\to {\cal M}$ which satisfies
the relations in Figure~\ref{kupbra}. The resulting evaluation yields a topological invariant of virtual links when the first two lines of Figure~\ref{kupbra}  are used to expand the link into a sum of elements of ${\cal S}.$
\end{st}

\begin{proof}
The relations we are going to use to prove the statement are as shown in Figure~\ref{kupbra}.
Note that for the case of planar tangles this map to diagrams modulo relations was constructed
explicitly by Kuperberg \cite{Kup}, and the image was in
$\Z[A,A^{-1}]$. We are going to follow \cite{Kup}, however, in the
non-planar case, the graphs can not be reduced just to collections
of closed curves (in the case of the plane, Jordan curves) and so
later evaluate to polynomials. In fact, irreducible graphs will appear in the non-planar case.
First, we treat every 1-complex with all components being graphs
from ${\cal S}$ and circles: We treat it as the formal product of
these graphs, where each circle evaluates to the factor
$(A^{6} + A^{-6} +1)$.
We note that if a graph $\Gamma$ from ${\cal S}$ has a bigon
or a quadrilateral, then we can use the relations shown in Figure~\ref{kupbra}
(resolution of quadrilaterals, resolution of bigons, loop evaluation)
to reduce it to a smaller graph (or two graphs, then we consider it
as a product).
So, we can proceed with resolving bigons and quadrilaterals until we
are left with a collection of graphs $t_{j}$ and circles; this gives
us an element from ${\cal M}$; once we prove the uniqueness of the resolution, we set the
stage for proving the existence of the invariant. We must carefully check well-definedness and topological invariance.
\smallbreak

In what follows, we shall often omit the letter
$f$ by identifying graphs with their images or intermediate graphs
which appear after some concrete resolutions.
\smallbreak

Our goal is to show that this map $f:{\cal S}\to {\cal M}$ is
well-defined. We shall prove it by induction on the number of graph edges.
{\it The induction base is obvious} and we leave its articulation to the reader.
To perform the induction step, notice that all of Kuperberg's relations
are {\it reductive}: from a graph we get to a collection of simpler
graphs.
\smallbreak

Assume for all graphs with at most $2n$ vertices that the statement
holds. Now, let us take a graph $\Gamma$ from ${\cal S}$ with $2n+2$
vertices. Without loss of generality, we assume this graph is
connected. If it has neither bigon nor quadrilateral, we just take
the graph itself to be its image.
Otherwise we use the relations {\it resolution of bigons} or
{\it resolution of quadrilaterals} as in Figure~\ref{kupbra}  to reduce it to a linear combination of
simpler graphs; we proceed until we have a sum (with Laurent polynomial coefficients) of
(products of) graphs without bigons and quadrilaterals.
\smallbreak

According to the induction hypothesis, for all simpler graphs, there
is a unique map to ${\cal M}$. However, we can apply the relations
in different ways by starting from a given quadrilateral or a bigon.
We will show that the final result does not depend on the bigon or
quadrilateral we start with.
To this end, it suffices to prove that if $\Gamma$ can be resolved
to $\alpha \Gamma_{1}+\beta \Gamma_{2}$ from one bigon
(quadrilateral) and also to $\alpha' \Gamma'_{1}+\beta' \Gamma'_{2}$ from the other one, then
both linear combinations can be resolved further, and will lead to
the same element of ${\cal M}$. This will show that final reductions are unique.
\smallbreak

Whenever two nodes of a quadrilateral coincide, then two edges coincide and it is 
no longer subject to the quadrilateral reduction relation. Thus we assume that quadrilaterals under discussion have distinct nodes.
Note that if two polygons (bigons or
quadrilaterals) share no common vertex then the corresponding two
resolutions can be performed {\em independently} and, hence, the
result of applying them in any order is the same. So, in this case,
$\alpha \Gamma_{1}+\beta \Gamma_{2}$ and $\alpha' \Gamma'_{1}+\beta'
\Gamma'_{2}$ can be resolved to the same linear combination in one
step. By the hypothesis,
$f(\Gamma_{1}),f(\Gamma_{2}),f(\Gamma'_{1}),f(\Gamma'_{2})$ are all
well defined, so, we can simplify the common resoltion for $\alpha
\Gamma_{1}+\beta \Gamma_{2}$ and $\alpha' \Gamma'_{1}+\beta'
\Gamma'_{2}$ to obtain the correct value for $f$ of any of these two
linear combinations, which means that they coincide.
\smallbreak

If two polygons (bigons or quadrilaterals) share a vertex, then
they share an edge because the graph is trivalent.
If  a connected trivalent graph has two different bigons sharing an
edge then the total number of edges of this graph is three, and the
evaluation of this graph in ${\cal T}$ follows from an easy
calculation.
Therefore, let us assume we have a graph $\Gamma$ with an edge shared by a
bigon and a quadrilateral. We can resolve the quadrilateral first,
or we can resolve the bigon first.
The calculation in Figure~\ref{undobigonsquare}
\begin{figure}
\centering\includegraphics[width=250pt]{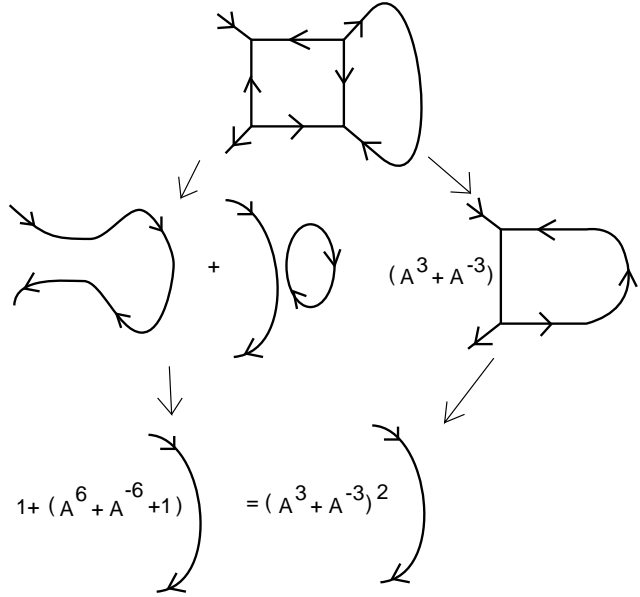}
\caption{\bf Two Ways of Reducing a Quadrilateral and a Bigon}
\label{undobigonsquare}
\end{figure}
shows that after a two-step resolution we get to the same linear
combination.
\smallbreak

A similar situation happens when we deal with two quadrilaterals
sharing an edge, see Figure~\ref{undotwosquares}. Here we have shown
just one particular resolution, but the picture is symmetric, so the
result of the resolution when we start with the right quadrilateral,
will lead us to the same result. See also Figure~\ref{triang}, Figure~\ref{twoquad} and  
Figure~\ref{annular}.
These figures illustrate two other ways in which the edge can be shared. Note that
Figure~\ref{triang} and Figure~\ref{twoquad} and  illustrate a possibly non-planar case and a virtual case, and that we use the abstract graph structure
(with a signed choice of relative order at the trivalent vertex as in Figure~\ref{kupbra} ) in the course of the evaluation. These cases 
cover all the ways that shared edges can occur, as the reader can easily verify. 

\begin{figure}
\centering\includegraphics[width=210pt]{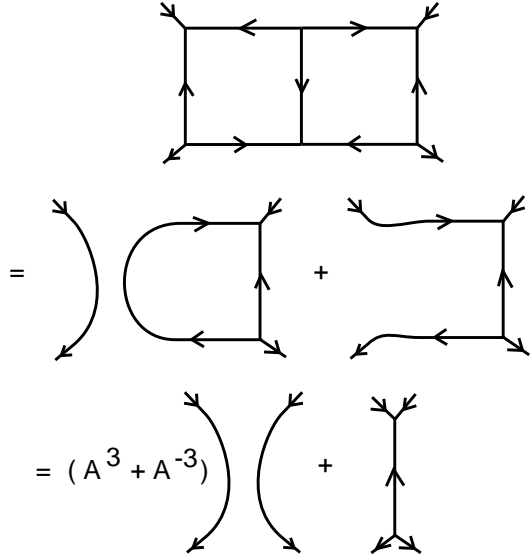}
\caption{\bf Resolving Two Adjacent Squares}
\label{undotwosquares}
\end{figure}

\begin{figure}
\centering\includegraphics[width=200pt]{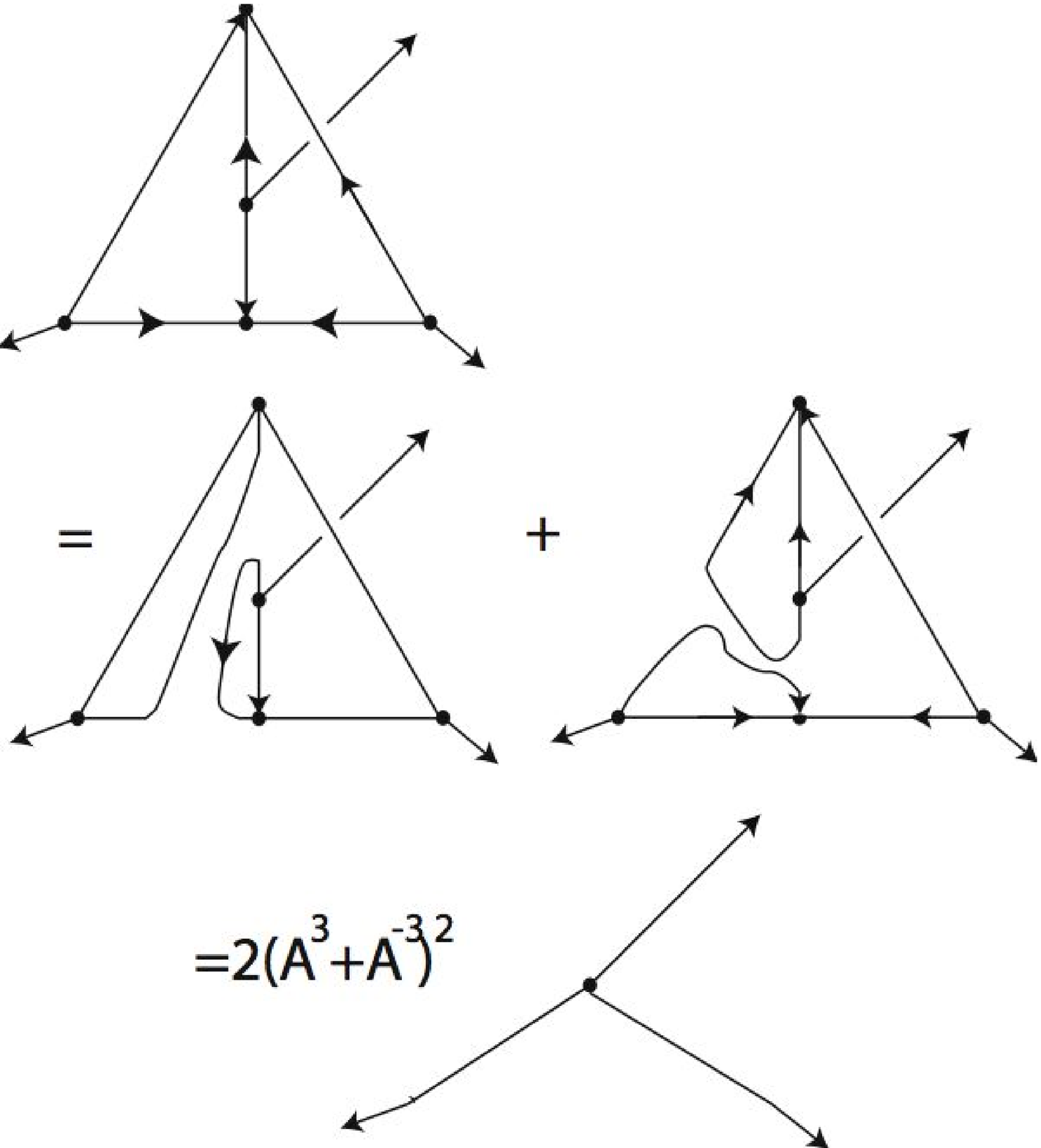}
\caption{\bf Resolving Two Different Adjacent Squares}
 \label{triang}
\end{figure}

\begin{figure}
\centering\includegraphics[width=200pt]{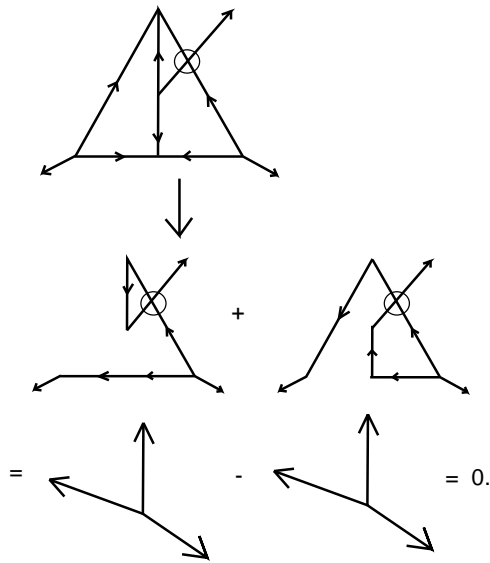}
\caption{\bf Resolving Two Different Adjacent Squares with Virtual Crossing}
 \label{twoquad}
\end{figure}

\begin{figure}
\centering\includegraphics[width=200pt]{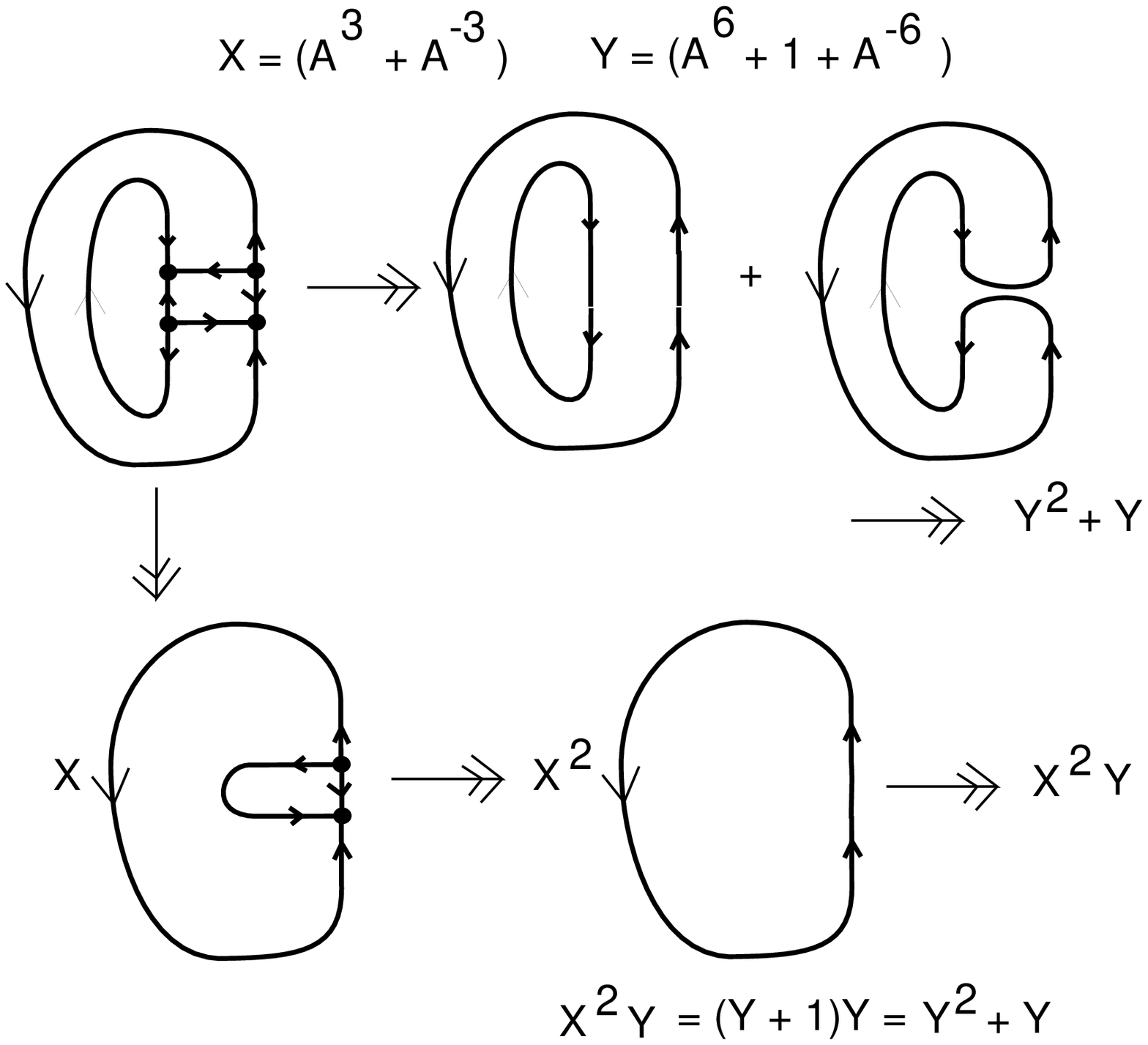}
\caption{\bf Resolving Two Annular Squares} \label{annular}
\end{figure}

Thus, we have performed the induction step and proved the
well-definiteness of the mapping. 
Note that the ideas of the proof are the same as in the classical case; however, we never assumed any planarity of the
graph; we just drew graphs planar whenever possible. Note that the situation in Figure~\ref{triang} is principally non-planar.
The invariance under the $Z$-move  follows from this definition because the graphical pieces into which we expand a crossing, as in Figure~\ref{kupbra}, are, as graphs, symmetric under the interchange produced by the $Z$-move.
\end{proof}

\noindent {\bf Remark.} We can, in the case of flat knots or standard virtual knots represented on surfaces, enhance the invariant by keeping track of the embedding of the graph in the surface and only expanding on bigons and quadrilaterals that bound in the surface. We will not pursue this version of the
invariant here. In undertaking this program we will produce evaluations that are not invariant under the 
$Z$-move for flats or for virtual knots. 
\smallbreak

Now we give a formal description of our main invariant. This evaluation is invariant under the 
$Z$-move. It is defined for virtual knots and links and it specializes to an invariant of free knots.
Let $K$ be an oriented virtual link diagram. With every classical
crossing of $K$, we associate two local states: the {\it oriented} one and
the {\it unoriented} one: the oriented one shown as an oriented smoothing
Figure~\ref{kupbra}, and the unoriented one shown as a connection of two trivalent nodes in 
Figure~\ref{kupbra}. A {\it state} of the diagram is a choice of local state for each crossing in the diagram.
\smallbreak

We define the bracket $[[\cdot]]$ (generalized Kuperberg  $sl(3)$ bracket) as follows. Let $K$ be an
oriented virtual link diagram.
For a state $s$ of a virtual knot diagram $K$, we define the weight
of the state as the coefficient of the corresponding graph according
to the Kuberberg relations Figure~\ref{kupbra}. More precisely, the weight of a
state is the product of weights of all crossings, where a weight of
a positive crossing is $A^{ 2wr}$ for the oriented resolution and
$-A^{- wr}$ for the unoriented resolution, $wr$ stands for the writhe
number (the oriented sign) of the crossing.

Set
\begin{equation}
[[K]] = \sum_{s} w(K_{s})\cdot f(K_{s}) \in {\cal M},
\end{equation}

where $w(s)$ is the weight of the state.

\begin{thm}
For a given virtual diagram $K,$ the normalized bracket $(A^{-8wr(K)})[[K]]$ is invariant under all Reidemeister moves
and the $Z$-move. Recall that $wr(K)$ denotes the writhe obtained by summing the signs of all the classical crossings in the corresponding diagram.
\end{thm}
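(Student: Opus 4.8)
The plan is to follow the classical scheme for bracket-type state sums: reduce the claim to invariance under each generating move, and verify each by a purely local computation in a disk neighborhood of the move. The key observation is that the preceding Statement already establishes that $f:{\cal S}\to{\cal M}$ is well defined and honors the graphical relations of Figure~\ref{kupbra} in ${\cal M}$. Since $[[K]]$ is obtained by expanding only the classical crossings (via the first two lines of Figure~\ref{kupbra}) and then applying $f$ to each resulting virtual trivalent graph, every virtual Reidemeister move and the detour move of Figure~\ref{Figure 2} is automatic: such a move rearranges only virtual crossings, which are transparent to $f$ because the graphs in ${\cal S}$ are taken up to virtual isotopy. Hence the state expansion is untouched and each state graph changes only by a detour, leaving its value in ${\cal M}$ unchanged. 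It therefore remains to treat the three classical Reidemeister moves and the $Z$-move.

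For the second and third Reidemeister moves I would expand every classical crossing involved, producing four (resp. eight) weighted state graphs, and then reduce each state using the bigon, quadrilateral and loop relations of Figure~\ref{kupbra}. Because these reductions take place inside a disk and use exactly the relations that $f$ respects in ${\cal M}$, the computation is formally identical to Kuperberg's planar verification; the only extra care is the bookkeeping of orientation sub-cases (parallel versus antiparallel strands at the two crossings of R2, and the corresponding cases for R3), since the oriented resolution is orientation-sensitive. In each case the weighted sum must collapse to the identity tangle for R2 and to a common element for the two sides of R3. I would either compute R3 directly or, more economically, deduce it once the bigon cancellations established for R2 are in hand.

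For the first Reidemeister move I would expand the single crossing of a curl into its oriented resolution (weight $A^{\pm 2}$) and its unoriented trivalent resolution (weight $-A^{\mp 1}$), then apply the loop and bigon relations. The curl evaluates to a scalar multiple of the identity strand, and the normalization $A^{-8wr(K)}$ is chosen precisely so that the factor $A^{8}$ (resp. $A^{-8}$) produced by a positive (resp. negative) curl is cancelled, leaving the normalized bracket unchanged; both curl types must be checked. For the $Z$-move, invariance follows as already noted after the Statement: each graphical piece into which a crossing is expanded is symmetric under the $Z$-interchange of Figure~\ref{fig2}, and the flanking virtual crossings are absorbed by the detour move, so no state is altered in ${\cal M}$.

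I expect the main obstacle to be the third Reidemeister move together with the orientation bookkeeping it forces: because the $sl(3)$ expansion carries an oriented resolution, each of the three crossings contributes its own cases, and one must verify that the bigon and quadrilateral reductions of the resulting state graphs agree on the two sides \emph{without} ever invoking global planarity (the local reductions are valid in ${\cal M}$ regardless, which is exactly what makes the extension to virtual diagrams legitimate). A secondary but genuinely necessary point is pinning down the precise $A^{8}$ curl factor in R1, which fixes the normalization; this is a finite computation, but it is the step where a stray sign or power would be easiest to introduce.
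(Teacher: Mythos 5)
Your proposal is correct and follows essentially the same route as the paper: the classical Reidemeister moves are verified by Kuperberg's local computations (with the curl factor absorbed by the writhe normalization), the genuinely new ingredient is the well-definedness of the graphical reduction $f$ established in the preceding Statement, and the $Z$-move follows from the symmetry of the expansion pieces. The paper's own proof states exactly this, only far more tersely, deferring all the local bookkeeping you describe to Kuperberg's original verification.
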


\begin{proof}
The invariance proof under Reidemeister moves repeats that of Kuperberg.
Note that the writhe behaviour is a consequence of the relations in Figure~\ref{kupbra}.
The only thing we require is that the Kuperberg relations
(summarized in Figure~\ref{kupbra}) can be applied to yield unique reduced graph polynomials.
 The discussion preceding the proof, proving Statement $1$,  handles this issue.
\end{proof}

From the definition of $[[K]]$ we have the following
\begin{crl}
If $[[K]]$ does not belong to $\Z[[A,A^{-1}]]\subset {\cal M}$ then
the knot $[[K]]$ is not classical. That is, if non-trivial graphs appear in the evaluation, then the knot is not classical.
\end{crl}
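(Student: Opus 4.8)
The plan is to prove the contrapositive: if $K$ is classical, then $[[K]]\in\Z[A,A^{-1}]$. Here $\Z[A,A^{-1}]$ is read as the ``scalar'' part of ${\cal M}$, namely those elements carrying no graph factor $t_{j}$; so the claim is equivalent to saying that no nontrivial graph survives in the evaluation of a planar diagram. The whole content is that planarity forces every state $K_{s}$ in the expansion of Theorem~2 to reduce, via the relations of Figure~\ref{kupbra}, all the way down to a $\Z[A,A^{-1}]$-combination of disjoint circles.

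First I would record that a classical knot has a diagram embedded in the plane with no virtual crossings. Applying the two local states of Figure~\ref{kupbra} at each classical crossing -- the oriented smoothing, which introduces no vertex, and the unoriented resolution, which introduces two trivalent nodes joined by an edge -- produces for each state $s$ a $1$-complex $K_{s}$ whose only vertices are trivalent and which is still embedded in the plane. Splitting into connected components (disjoint unions are products in ${\cal M}$, so each component may be treated separately), each component of $K_{s}$ is either a circle or a connected planar trivalent graph; and by the orientation conventions built into Figure~\ref{kupbra} it is bipartite, the edges running from the first part to the second. In particular every bounded face, being a cycle in a bipartite graph, is bounded by an \emph{even} number of edges.

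The key step is the Euler-characteristic count already sketched in the Introduction. For a connected planar trivalent graph with $V\ge 1$ vertices, $E$ edges and $F$ faces one has $3V=2E$ and $V-E+F=2$, so $F=(V+4)/2$, and since $\sum_{\text{faces}}(\text{size})=2E$ the average face size is $2E/F=6V/(V+4)<6$. Hence some face has fewer than six sides, and by the parity observation above it must have size $2$ or $4$, i.e.\ a bigon or a quadrilateral. Thus every planar trivalent bipartite graph that is not a union of circles admits one of the reduction moves of Figure~\ref{kupbra}; these moves are planar-local and orientation-preserving, so the resulting smaller graph is again planar, trivalent and bipartite. Iterating, each $K_{s}$ reduces to disjoint circles, each evaluating to $(A^{6}+A^{-6}+1)$, whence $f(K_{s})\in\Z[A,A^{-1}]$. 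Since the state weights are monomials in $A$, we conclude $[[K]]=\sum_{s}w(K_{s})\,f(K_{s})\in\Z[A,A^{-1}]$.

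The main obstacle, and really the only place care is needed, is to verify that the class of planar, trivalent, bipartite, even-faced graphs is \emph{closed} under the reduction moves, so that the Euler bound can be reapplied at every stage rather than merely at the outset; this is exactly where one invokes that the moves of Figure~\ref{kupbra} act locally in the plane and respect the bipartite orientation. Granting this, the corollary follows by contraposition: if $[[K]]\notin\Z[A,A^{-1}]$ then some $f(K_{s})$ contains a graph factor $t_{j}$, which by the above is impossible for a planar diagram, so $K$ cannot be classical.
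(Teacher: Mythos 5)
Your proof is correct and takes essentially the same route as the paper: the corollary is stated there without a separate argument (``from the definition of $[[K]]$''), resting on the Euler-formula observation already made in the paper's introduction that a planar trivalent oriented graph must contain a two- or four-sided face, which is precisely the count $2E/F=6V/(V+4)<6$ combined with even-facedness that you carry out. The one point you and the paper both leave implicit is that the small face produced by the Euler count must be a non-degenerate bigon or quadrilateral (distinct vertices and edges) for the reduction relations of Figure~\ref{kupbra} to apply.
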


Recalling that a free link is an equivalence of virtual knots modulo
$Z$-moves and crossing switches and taking into account that
the skein relations of Figure~\ref{kupbra} for $[[\cdot]]$ for $\skcrro$
and $\skcrlo$ are the same when specifying $A=1$, we get the
following

\begin{crl}
$[[K]]_{A=1}$ and $[[K]]_{A= -1}$ are invariants of free links.
\end{crl}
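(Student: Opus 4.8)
The plan is to reduce free-link equivalence to three families of moves and to handle each in turn, using the theorem above on the normalized bracket $(A^{-8wr(K)})[[K]]$ for two of them and a short local computation for the third. As recalled just before the statement, a free link is a virtual link taken up to (i) the classical and virtual Reidemeister moves, (ii) the $Z$-move, and (iii) crossing switches, i.e.\ the passage from a virtual diagram to its underlying flat diagram. Invariance of $(A^{-8wr(K)})[[K]]$ under (i) and (ii) is exactly the content of the preceding theorem, so the only new point at $A=\pm 1$ is invariance under (iii).

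First I would observe that the writhe normalization trivializes at these special values. Since $A^{-8wr(K)}=1$ when $A=\pm 1$, the element $[[K]]_{A=\pm 1}$ coincides with its own normalization; hence, by the preceding theorem, it is already invariant under all Reidemeister moves and the $Z$-move, with no residual writhe factor to carry along.

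Next I would perform the local computation establishing invariance under a crossing switch. By definition the weight of a crossing of sign $wr$ is $A^{2wr}$ on its oriented state and $-A^{-wr}$ on its unoriented state, so the local contribution of a positive crossing ($wr=+1$) is the oriented smoothing with coefficient $A^{2}$ together with the unoriented smoothing with coefficient $-A^{-1}$, while that of a negative crossing ($wr=-1$) is the oriented smoothing with coefficient $A^{-2}$ together with the unoriented smoothing with coefficient $-A$. Setting $A=1$ turns both into (oriented, coefficient $1$) plus (unoriented, coefficient $-1$), and setting $A=-1$ turns both into (oriented, coefficient $1$) plus (unoriented, coefficient $1$). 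In either case the two local expansions coincide; this is precisely the equality of the $\skcrro$ and $\skcrlo$ skein rules noted before the statement.

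Finally I would assemble the pieces. The bracket $[[K]]$ is a state sum whose weight factors as a product over the crossings, and the $f$-value $f(K_{s})$ of a state depends only on the chosen smoothings, not on over/under data. Thus, at $A=\pm 1$, replacing one crossing by its switch sends each state to a state with the same smoothings and the same weight, so $[[K]]_{A=\pm 1}$ is unchanged. Combined with the previous paragraphs, $[[K]]_{A=\pm 1}$ is invariant under all moves generating free-link equivalence, which is the claim. The part that most deserves care is the opening reduction of free-link equivalence to the move families (i)--(iii); once that is granted the remainder is a routine specialization, the one mild subtlety being the verification that the vanishing of the writhe correction at $A=\pm 1$ is exactly what lets us drop the normalizing factor while retaining invariance under the classical Reidemeister moves.
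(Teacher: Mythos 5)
Your proposal is correct and follows essentially the same route as the paper, which likewise derives the corollary from the preceding theorem together with the observation that the skein expansions of the two crossing types coincide at $A=\pm 1$, so that crossing switches (the remaining generator of free-link equivalence beyond Reidemeister and $Z$-moves) leave the bracket unchanged. Your explicit check at $A=-1$ and the remark that the writhe normalization $A^{-8wr(K)}$ becomes trivial at these values merely spell out details the paper leaves implicit.
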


By the unoriented state $K_{us}$ of virtual knot diagram (resp.,
free knot diagram) $K$ we mean the state of $K$ where all crossings
are resolved in the unoriented fashion where the crossing is replaced by two connected trivalent nodes. {\bf Notation:}
$K_{us}$. Note that $K_{us}$ is treated as a graph.

\begin{crl} Assume for a
virtual knot (or free knot) $K$ with $n$ classical crossings the
graph $K_{us}$ has neither bigons nor quadrilaterals. Then every
knot $K'$ equivalent to $K$ has a state $s$ such that $K'_{s}$
contains $K_{us}$ as a subgraph. This state can be treated as an element of ${\cal M}$.
In particular, $K$ is minimal.
 \label{crl1}
\end{crl}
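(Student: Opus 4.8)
The plan is to single out the all-unoriented state $K_{us}$ as the unique state of maximal graphical complexity and to show that, because it is irreducible by hypothesis, it survives into the bracket with a nonzero monomial coefficient; invariance of the normalized bracket then forces the same basis graph to appear for every equivalent diagram. Throughout, ``complexity'' means the number of trivalent vertices: resolving a crossing in the unoriented fashion creates two trivalent nodes, while the oriented resolution creates none, so a state of $K$ with $k$ unoriented resolutions has exactly $2k$ vertices, and $K_{us}$ (all $n$ crossings unoriented) is the unique state with the maximal number $2n$ of vertices.

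First I would record that $f(K_{us}) = K_{us}$: by hypothesis $K_{us}$ has neither bigons nor quadrilaterals, so none of the reduction relations of Figure~\ref{kupbra} applies and $K_{us}$ is already a basis element of ${\cal M}$. Next I would observe that every reduction move strictly decreases the vertex count --- bigon removal deletes two vertices and quadrilateral resolution deletes four --- so for any state $s$ each basis graph occurring in $f(K_{s})$ has at most $2k \le 2n$ vertices, with equality $2n$ forced only by $s = K_{us}$. Consequently, when $[[K]] = \sum_{s} w(K_{s}) f(K_{s})$ is written in the basis ${\cal T}$, the graph $K_{us}$ can be produced by no state other than $K_{us}$ itself; its coefficient is exactly the weight $w(K_{us}) = \prod_{\text{crossings}} (-A^{-wr})$, a nonzero monomial, and there is nothing to cancel it. Thus $K_{us}$ occurs in $[[K]]$ with nonzero coefficient.

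Now I would invoke the invariance of $(A^{-8wr})[[\cdot]]$ under all Reidemeister moves and the $Z$-move: for any $K'$ equivalent to $K$ the two normalized brackets are equal, and since normalization only multiplies by an overall monomial, the basis graph $K_{us}$ occurs with nonzero coefficient in $[[K']]$ as well. Writing $[[K']] = \sum_{s} w(K'_{s}) f(K'_{s})$, a nonzero total coefficient of $K_{us}$ forces at least one state $s$ of $K'$ for which $K_{us}$ appears in the reduction $f(K'_{s})$. Since reductions only lower the vertex count, $K'_{s}$ must itself have at least $2n$ vertices; as $K'_{s}$ has $2k' \le 2n'$ vertices (with $n'$ the number of crossings of $K'$), we obtain $n \le n'$, which is the minimality assertion.

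The main obstacle is upgrading ``$K_{us}$ appears in $f(K'_{s})$'' to the literal statement ``$K'_{s}$ contains $K_{us}$ as a subgraph.'' Here I would use that the bigon and quadrilateral relations are \emph{local}: each replaces a small polygon and its incident edges by a smaller reconnection while leaving the remainder of the graph untouched. Reading the reduction chain from $K'_{s}$ down to $K_{us}$ backwards therefore exhibits $K'_{s}$ as obtained from $K_{us}$ by inserting bigon and square gadgets along edges, so the vertices and adjacency pattern of $K_{us}$ persist inside $K'_{s}$. The delicate points to nail down are that no reduction step can erase part of the irreducible core $K_{us}$ (guaranteed by the maximality of its vertex count, which leaves no slack for the core itself to be reduced) and that the reconnection edges produced by the relations are matched correctly to edges of $K_{us}$; once these are checked, the state $s$ supplies the desired subgraph and the corollary follows.
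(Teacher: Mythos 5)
Your argument is correct and is essentially the one the paper intends: the paper states this corollary without proof as an immediate consequence of the construction of $[[\cdot]]$, and your vertex-count observation (reductions strictly decrease the number of trivalent vertices, so the $2n$-vertex graph $K_{us}$ can only arise from the all-unoriented state and cannot be cancelled) is the natural way to make the assertion precise. The final step, upgrading ``$K_{us}$ appears in $f(K'_{s})$'' to containment in $K'_{s}$, is handled exactly as you indicate, via the locality of the bigon and quadrilateral relations.
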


Note that the coincidence of $K_{us}$ and $K'_{us}$ does not
guarantee the coincidence of $K$ and $K'$. For example, if $K$ and
$K'$ differ by a third unoriented Reidemeister move, then, of
course, $[[K]]=[[K']]$. The corresponding resolutions $K_{us}$ and
$K'_{us}$ will coincide (they will have a hexagon inside).
 
\begin{crl}
Let $K$ be a four-valent framed graph with $n$ crossings and with
girth number at least five.
Then the hypothesis of Corollary \ref{crl1} holds. \label{crl2}
\end{crl}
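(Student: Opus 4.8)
The plan is to unwind what the hypothesis of Corollary~\ref{crl1} actually demands and then read it off directly from the girth assumption. Recall that $K_{us}$ is obtained from the framed $4$-valent graph $K$ by replacing each node with a pair of trivalent vertices joined by a short edge (the unoriented resolution into two connected trivalent nodes shown in Figure~\ref{kupbra}). I would first set up clean bookkeeping: call the edges of $K_{us}$ coming from arcs of $K$ the \emph{external} edges and the new short edges the \emph{internal} edges. External edges are in bijection with the arcs of $K$, internal edges with the nodes of $K$, and each trivalent vertex of $K_{us}$ is incident to exactly one internal edge. Since the hypothesis of Corollary~\ref{crl1} is precisely that $K_{us}$ contains no bigon ($2$-cycle) and no quadrilateral ($4$-cycle), the whole task reduces to excluding short cycles in $K_{us}$.

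The key step is a projection lemma. Let $\pi$ send each trivalent vertex of $K_{us}$ to the node of $K$ it was split from; I claim that every cycle $C$ in $K_{us}$ projects under $\pi$ to a closed walk in $K$ whose length equals the number of external edges used by $C$. The observation that makes the count work is that the internal edges traversed by $C$ are pairwise non-adjacent: each trivalent vertex carries a unique internal edge, so any internal edge $e_v$ used by $C$ is flanked by two external edges, and its two endpoints $v_{1},v_{2}$ both project to the same node $v$, contributing nothing to the projected walk. Hence if $C$ has length $\ell$ and uses $q$ internal edges, the projected walk has length $\ell-q$, and non-adjacency forces $q\leq \ell/2$.

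With this in hand I would run the short case analysis. For a quadrilateral ($\ell=4$) the possibilities are $q\in\{0,1,2\}$: $q=0$ projects to a closed walk of length $4$ in $K$; $q=1$ collapses one internal edge and projects to a walk of length $3$; and $q=2$, where the two internal edges must alternate with the two external ones, projects to a walk of length $2$, namely two arcs of $K$ between a single pair of nodes. In every case $K$ acquires a closed walk of length at most $4$, hence a cycle of length at most $4$, contradicting girth $\geq 5$. The bigon case ($\ell=2$) is identical and easier: $q=0$ yields a $2$-cycle or a loop and $q=1$ yields a loop of $K$. Thus girth $\geq 5$ forces $K_{us}$ to have neither bigons nor quadrilaterals, which is exactly the hypothesis of Corollary~\ref{crl1}, and the minimality conclusion then follows from that corollary.

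The step I expect to require the most care is the degeneracy analysis inside the projection lemma, that is, confirming that the projected closed walk genuinely witnesses a short cycle in $K$. A simple cycle of $K_{us}$ may contain both halves $v_{1},v_{2}$ of a single node $v$ \emph{without} using the internal edge $e_{v}$, in which case $\pi$ identifies two of the cycle's vertices and the projected walk is non-simple; one must check that such a walk still contains an honest cycle of length no larger than the stated bound. This is fine because a non-trivial closed walk of length at most $4$ always decomposes into strictly shorter closed walks, each still too short for girth $\geq 5$. I would also note at the outset that the precise edge-pairing in the unoriented smoothing is irrelevant to the length count---only the incidence ``one internal edge per trivalent vertex'' is used---so the argument is insensitive to the framing conventions. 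Apart from these bookkeeping points the case analysis is entirely mechanical.
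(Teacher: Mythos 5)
Your proof is correct. The paper states this corollary without any proof, treating it as immediate; your projection argument (a bigon or quadrilateral of $K_{us}$ collapses, after contracting the pairwise non-adjacent internal edges, to a closed walk of length at most $4$ in $K$ with distinct external edges, hence to a cycle of length at most $4$, contradicting girth $\ge 5$) is exactly the intended reasoning carefully formalized, as confirmed by the paper's later remark that certain quadrilaterals in a girth-$4$ diagram ``lead to quadrilaterals in $K_{us}$.''
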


So, this proves the minimality of a large class of framed
four-valent graphs regarded as free knots: all graphs having girth
$\ge 5$ and many other knots. For example, consider the free knot $K_{n}$
whose Gauss diagram is the $n$-gon, $n>6$: it consists of $n$ chords
where $i$-th chord is linked with exactly two chords, those having
numbers $i-1$ and $i+1$ (the numbers are taken modulo $n$). Then $K_{n}$
satisfies the condition of \ref{crl1} and, hence, is minimal in a
strong sense.

Note that the triviality of such $n$-gons as free knots was proven
only for $n\le 6$.

\begin{rk}
The above argument works for links and tangles as well as knots.
\end{rk}

From the construction of $[[\cdot]]$ we get the following corollary.
\begin{crl}
Let $K$ be a virtual (resp., flat) knot, and let
$\Gamma_{1}\cdots\Gamma_{k}$ be a product of irreducible
graphs which appear as a summand in $[[K]]$ (resp.,
$[[K]]|_{A=1}$) with a non-zero coefficient.
 Then the minimal virtual crossing number of $K$ is greater than or
equal to the sum of crossing numbers of graphs: $cr(\Gamma_{1})+\cdots + cr(\Gamma_{k})$ and the underlying genus of $K$ is
less than or equal to the
sum of genera $g(\Gamma_{1})+\cdots + g(\Gamma_{k})$
(in virtual or free knot category).
\end{crl}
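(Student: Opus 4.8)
The plan is to handle the two inequalities separately, in each case tracking a single product $\Gamma_1\cdots\Gamma_k$ back through the state sum that computes $[[K]]$. Since $[[D]]=[[K]]$ for any diagram $D$ of $K$ and the product occurs with nonzero coefficient, there is one state $K_s$ of $D$ — a choice of oriented or unoriented local resolution at each classical crossing — whose graphical reduction by the bigon and quadrilateral relations of Figure~\ref{kupbra} contributes the term $\Gamma_1\cdots\Gamma_k$. All of the work lies in comparing the complexity of this reduced state with that of $D$. Throughout I would use that both invariants are additive over disjoint unions: $cr(\Gamma_1\sqcup\cdots\sqcup\Gamma_k)=\sum_i cr(\Gamma_i)$ (draw each component in a separate disk) and $g(\Gamma_1\sqcup\cdots\sqcup\Gamma_k)=\sum_i g(\Gamma_i)$ (additivity of graph genus over blocks, Battle--Harary--Kodama--Youngs).

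For the crossing-number bound I would argue forward, descending from $D$ to $\Gamma_1\cdots\Gamma_k$. Choose $D$ realizing the minimal virtual crossing number $cr_v(K)$. Passing to the state $K_s$ replaces each classical crossing by a local picture — two parallel arcs, or two trivalent nodes joined by an edge — inside a small disk, which creates no new virtual crossing, so $K_s$ is drawn with at most $cr_v(K)$ virtual crossings. Each subsequent bigon or quadrilateral resolution is again a purely local simplification that removes structure and introduces no virtual crossing. Hence the disjoint union $\Gamma_1\sqcup\cdots\sqcup\Gamma_k$ is presented with at most $cr_v(K)$ virtual crossings, giving $\sum_i cr(\Gamma_i)=cr(\sqcup_i\Gamma_i)\le cr_v(K)$. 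The only thing to verify is that the quadrilateral resolution, in each of the planar and virtual configurations of Figures~\ref{undobigonsquare}--\ref{annular}, never secretly forces an extra virtual crossing; this is a finite inspection.

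For the genus bound the same forward argument gives the \emph{opposite} inequality $\sum_i g(\Gamma_i)\le g(K)$ (embedding the reduced state inside a minimal supporting surface of $K$), so to obtain the stated bound $g(K)\le\sum_i g(\Gamma_i)$ I would instead invert the reduction and reconstruct a diagram of $K$ on a small surface. Start from a minimal framed embedding of $\Gamma_1\sqcup\cdots\sqcup\Gamma_k$ on a surface $\Sigma_0$ of genus $\sum_i g(\Gamma_i)$ and undo the reduction sequence that produced this product from $K_s$. Inverting a bigon resolution (re-doubling an edge into a bigon) is manifestly local, stays inside a disk of $\Sigma_0$, and preserves the framing. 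Inverting a quadrilateral resolution — reinserting a $4$-gon between the two arcs the forward move produced — is likewise local \emph{provided} those two arcs cobound a disk in the current embedding, in which case the square is inserted in that disk with no new handle. Granting this, one rebuilds the framed trivalent graph $K_s$ on $\Sigma_0$, and then inverts the crossing expansions by collapsing each local state picture back to a classical crossing carrying its original framing; these moves are confined to disks and add no genus. The result is a diagram of $K$ embedded on $\Sigma_0$, whence $g(K)\le g(\Sigma_0)=\sum_i g(\Gamma_i)$.

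The main obstacle is exactly the reverse quadrilateral step: one must guarantee that every reinserted square lands in a disk of $\Sigma_0$ (so no handle is created) and that the cyclic order restored at each recreated node is the framing of $K$. I would control the first point by undoing the reductions in the reverse of the order in which they were applied, so that at each stage the two arcs about to be rejoined were themselves produced by a planar resolution one step earlier and hence still cobound an embedded disk; the framing at each recreated trivalent or $4$-valent node is then read off from the local models of Figures~\ref{kupbra} and~\ref{smooth}, which are symmetric under the $Z$-move ambiguity and therefore well-defined at the free-knot level. Keeping all of these local disks disjoint and embedded simultaneously on $\Sigma_0$, rather than arranging them one at a time, is the delicate point at which the genus argument must be made rigorous.
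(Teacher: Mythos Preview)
Your crossing-number argument is correct and is exactly what the paper intends by saying the corollary follows ``from the construction of $[[\cdot]]$'': pick a diagram realizing $cr_v(K)$, pass to the state $K_s$, and carry out each bigon and quadrilateral reduction locally in the given immersion; none of these steps creates a virtual crossing, so $\bigsqcup_i\Gamma_i$ is drawn with at most $cr_v(K)$ virtual crossings. The paper offers no further detail than this.

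For the genus bound you have been misled by a misprint. The printed inequality $g(K)\le\sum_i g(\Gamma_i)$ is backwards; the intended statement is the lower bound $g(K)\ge\sum_i g(\Gamma_i)$, parallel to the crossing-number bound, and your \emph{forward} argument already proves it: put a minimal diagram on a surface of genus $g(K)$, perform the state resolution and all graphical reductions locally on that surface, conclude that $\bigsqcup_i\Gamma_i$ embeds there, and invoke additivity of graph genus. That the authors mean the lower bound is clear from their own use of the corollary in Section~6, where an irreducible state graph of genus~$3$ is used to conclude that the link $L$ ``cannot be represented on any surface of genus smaller than $3$,'' i.e.\ $g(L)\ge 3$. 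Your reconstruction argument for the printed direction cannot be salvaged: the obstacle you isolate --- that on a \emph{minimal} surface for $\bigsqcup_i\Gamma_i$ the two arcs left by a quadrilateral resolution need not cobound a disk --- is genuine, and reinserting the square will in general force a new handle. In fact the printed inequality is simply false (for instance, a split union $K_1\sqcup K_2$ of two genus-one virtual knots can have a single genus-one irreducible graph appearing with nonzero coefficient in $[[K_1]]\cdot[[K_2]]$, yet $g(K_1\sqcup K_2)=2$), so no argument can work. Discard the reconstruction and keep your forward argument with the inequality reversed.
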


The above corollary easily allows one to reprove the theorem first proved in \cite{MV},
that the number of virtual crossings of a virtual knot grows quadratically with respect
to the number of classical knots for some families of graphs. In \cite{MV}, it was
done by using the parity bracket. Now, we can do the same by using $$Free[[K]] = [[K]]|_{A=1}.$$
With this invariant one can easily construct infinite series of trivalent bipartite graphs which serve as $K_{us}$ for some sequence of knots $K_{n}$ and such
that the minimal crossing number for
these graphs grows quadratically with respect to the number of crossings. Recalling that
the number of vertices comes from the number of classical crossings of $K_{n}$, we
get the desired result. 
\smallbreak

\subsection{The Generalized Kuperberg  $G_{2}$ Bracket}
The analysis for the Kuperberg $G_{2}$ invariant follows the same lines as our analysis for the 
$sl(3)$ invariant. The expansion rules for this invariant are given in Figure~\ref{crossing}, Figure~\ref{looptrisquare} and Figure~\ref{pentagon}. It is implicit in Kuperberg's work that collisions of bigons, triangles, quadrilaterals and pentagons that do not involve virtual crossings lead to consistent results for the $q$ variable indicated in these figures. We do not repeat these verifications here. However, when we make the analysis with collisions involving virtual crossings we find that it is in fact necessary to restrict to the case of flat virtual knots or free knots in order to obtain consistency. Thus we take $q=1$ for our $G_{2}$ invariant. The reasons for restricting to $q=1$ becomes apparent from the verifications but we will not show the details here.
\bigbreak

Along with restricting to $q=1$ we have to explain how to handle the reduction calculus for virtual graph diagrams in the plane. A graph diagram in the plane may have (flat) $4$-regular crossings and trivalent crossings. Each such crossing is endowed with a cyclic order by its embeddding in the plane.
For flat virtual knots we do not allow the cyclic order at a crossing to change. This means that for flat knots, we do not consider the reduction of collisions of polygons that involve virtual crossings that cannot be isotoped away from the polygon. For example, view Figure~\ref{fourvirtfive} where we have a four-sided region that can be reduced and a five-sided region that has a virtual intersecting arc. We can for free knots perform the reduction of the four-sided region, but there is no way to reduce the five-sided region. Consequently there is no need for the free-knot $G_{2}$ invariant to examine, as is shown in the figure, what will happen when we change a local cyclic order and reduce the five-sided region. For the invariant of free knots that we are about to describe, we need that the results of expansion from the two local drawings in Figure~\ref{fourvirtfive} give the same results. We will discuss that below.
\bigbreak

The reader should now see that the $G_2$ invariant has two distinct flavors, one for flat virtual knots an links and one for free knots and links. The free knot version involves more checking for collisions than the flat knot version. One can verify that for those collisions that do not involve virtual crossings such as illustrated in Figure~\ref{fivefivedir},    Figure~\ref{fivefour},   Figure~\ref{fivefourend} the expansions are independent of which polygon is first expanded. This provide the proof of validity for the flat knot invariant. The flat knot invariant is of interest, but it should be pointed out that flat virtual knots and links are fully classified via \cite{Kadokami,HS}. %
\bigbreak

\noindent{\bf The $G_2$ Invariant for free knots.}
For free knots, we allow the $Z$-move as part of the equivalence relation, and hence allow the corresponding change in cyclic orde at a trivalent vertex. For the trivalent vertices we use the rule indicated in Figure~\ref{looptrisquare} that shows a change of sign if there is a transposition applied to the cyclic order at a trivalent vertex.  That is, the invariant will be valued in an algebra of graphs with integer coefficients where there is an equivalence relation on the graphs generated by the reduction and transformation rules indicated in the figures such as the above and the ones we now discuss. We can formalize this just as we have already done for $sl(3).$
Furthermore, {\it we only allow reductions on bigons, triangles, quadrilaterals and pentagons that have an interior that is clear of any other aspects of the graph.} This principle is illustrated in Figure~\ref{quad} where we start with a quadrilateral that has an arc crossing it virtually. We change a cyclic order at one of its trivalent vertices, move the arc by a detour move, and then expand the quadrilateral.
\bigbreak

\begin{dfn}
We let $\{\{K\}\}_{Free}$ and $\{\{K\}\}_{Flat}$ denote the $G_2$ invariants for $K$ free and flat respectively. 
\end{dfn}

\begin{dfn}
By a {\em leading state} of a framed $4$-regular graph $G$ on $n$ vertices
(considered as a representative of a free link or a flat link) we mean a state where each
of the $n$ vertices is resolved with an additional edge. 
\end{dfn}

Thus, every $4$-regular graph $G$ has $4^{n}$ states in the expansion
of $\{\{G\}\}_{Free}$ or $\{\{G\}\}_{Flat}$, $2^{n}$ of which are leading
Note that all these graphs $G_{s}$ corresponding to various leading states
$S$, appear in the expansion of $\{\{G\}\}_{Free}$ (resp., $\{\{G\}\}_{Flat}$)
with the coefficient $1/2^{n}$.
\bigbreak

\begin{figure}
     \begin{center}
     \begin{tabular}{c}
     \includegraphics[width=8cm]{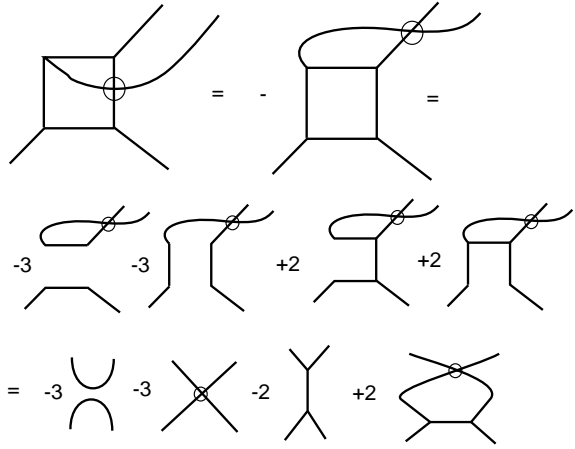}
     \end{tabular}
     \caption{\bf Evaluating a Virtual Quad}
     \label{quad}
\end{center}
\end{figure}

In Figure~\ref{twist} we show that the resulting flat virtual evaluation is invariant under the $Z$-move.
Hence our $G_2$ evaluation is defined on free knots. Note that we use crucially the sign changes from the reordering of cyclic vertices in this derivation. In both Figure~\ref{quad} and Figure~\ref{twist}, we are using the $q=1$ values for the expansions. We leave it to the reader to see that this invariant will not work except at $q=1.$ Figure~\ref{flink} shows the expansion and evaluation of the $G_2$ invariant for the simplest free link. Figure~\ref{fivefivedir},    Figure~\ref{fivefour} and Figure~\ref{fivefourend} illustrate the consistency of polygon collisions where there are no local virtual crossings. 
Figure~\ref{fourvirtfour},    Figure~\ref{fourvirtfive},   Figure~\ref{fivevirtfive},   Figure~\ref{fivevirtfiveinsert}
and Figure~\ref{fivexp} illustrate aspects of collision consistency for four and five sided polygons. We have not given all details, but we have specified in this section how to make these expansions and compare them. For example, in  Figure~\ref{fourvirtfive} we point out that due to the change of cyclic order there should be a sign change when expanding on four versus five in this configuration. We assert that this does indeed happen and leave out the long details.  In  Figure~\ref{fivevirtfive} we indicate a sign change due to cyclic order and that the righthand side of the first equality can be transformed by a $180$ degree rotation (keeping tangle ends fixed) to the lef-hand side. This means that if we expand on just the left-hand side, the resulting expansion should go into the negative of itself under this tangle rotation. Figure~\ref{fivevirtfiveinsert} shows explicitly how certain free trivalent virtual tangles are transformed under the $180$ degree rotation. The arrows on the tangle-boxes at the top of the figure are placed to remind the reader that the tangle-box turns through $180$ degrees, with the ends of the tangle (at its four corners) remaining fixed. Finally Figure~\ref{fivexp}  shows the end-result of the expansion of the lefthand side of  Figure~\ref{fivevirtfive}. It is then easy for the reader to check that this expansion does have the requisite property under the $180$ degree rotation. This completes our sketch of the proof of the validity of the $G_2$ invariant for free knots and links.
\bigbreak

\begin{figure}
     \begin{center}
     \begin{tabular}{c}
     \includegraphics[width=8cm]{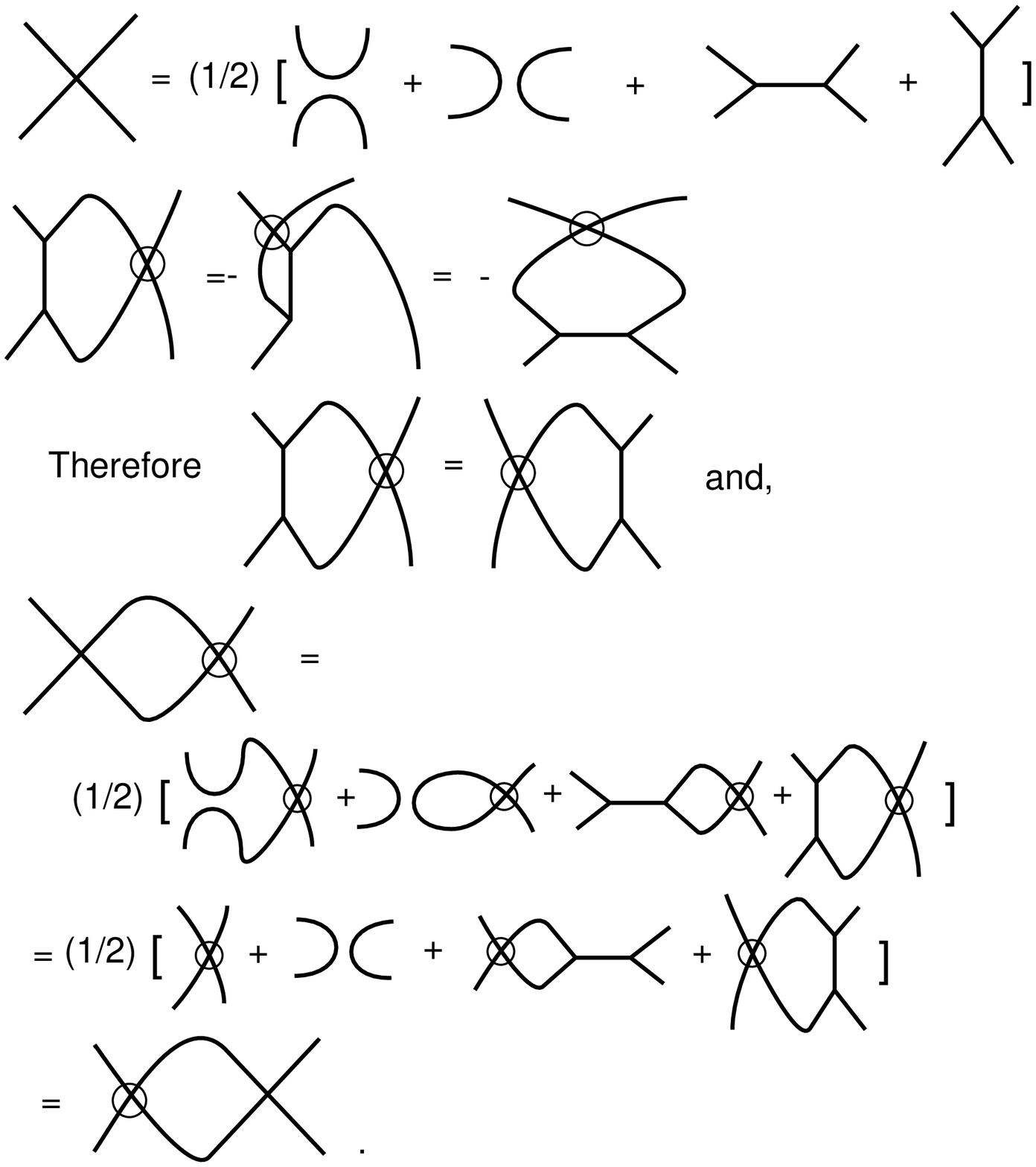}
     \end{tabular}
     \caption{\bf $G_2$ Evaluation is invariant under the $Z$-move.}
     \label{twist}
\end{center}
\end{figure}

\begin{figure}
     \begin{center}
     \begin{tabular}{c}
     \includegraphics[width=8cm]{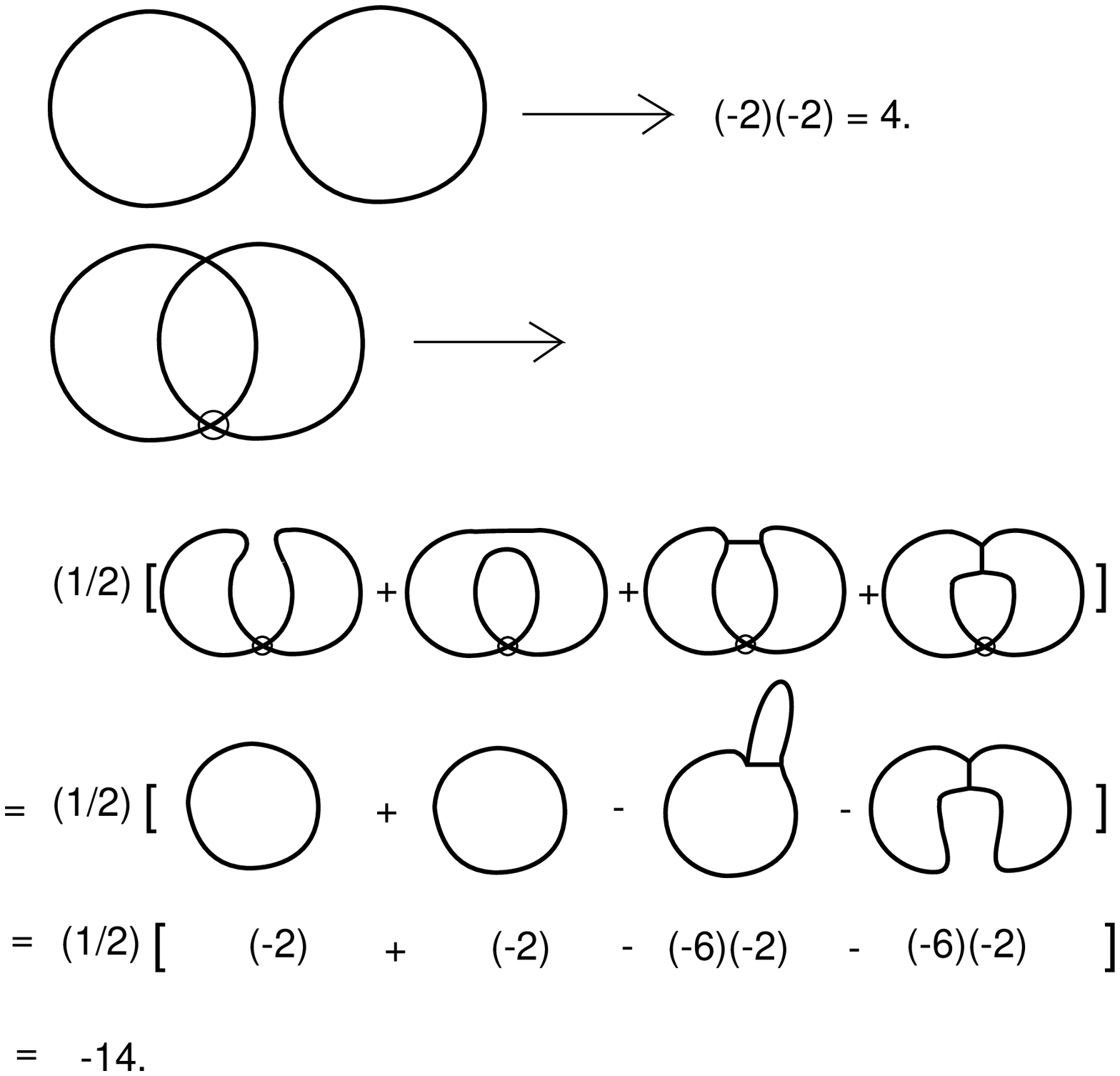}
     \end{tabular}
     \caption{\bf $G_2$ Evaluation of Simplest Free Link.}
     \label{flink}
\end{center}
\end{figure}

\begin{figure}
     \begin{center}
     \begin{tabular}{c}
     \includegraphics[width=5cm]{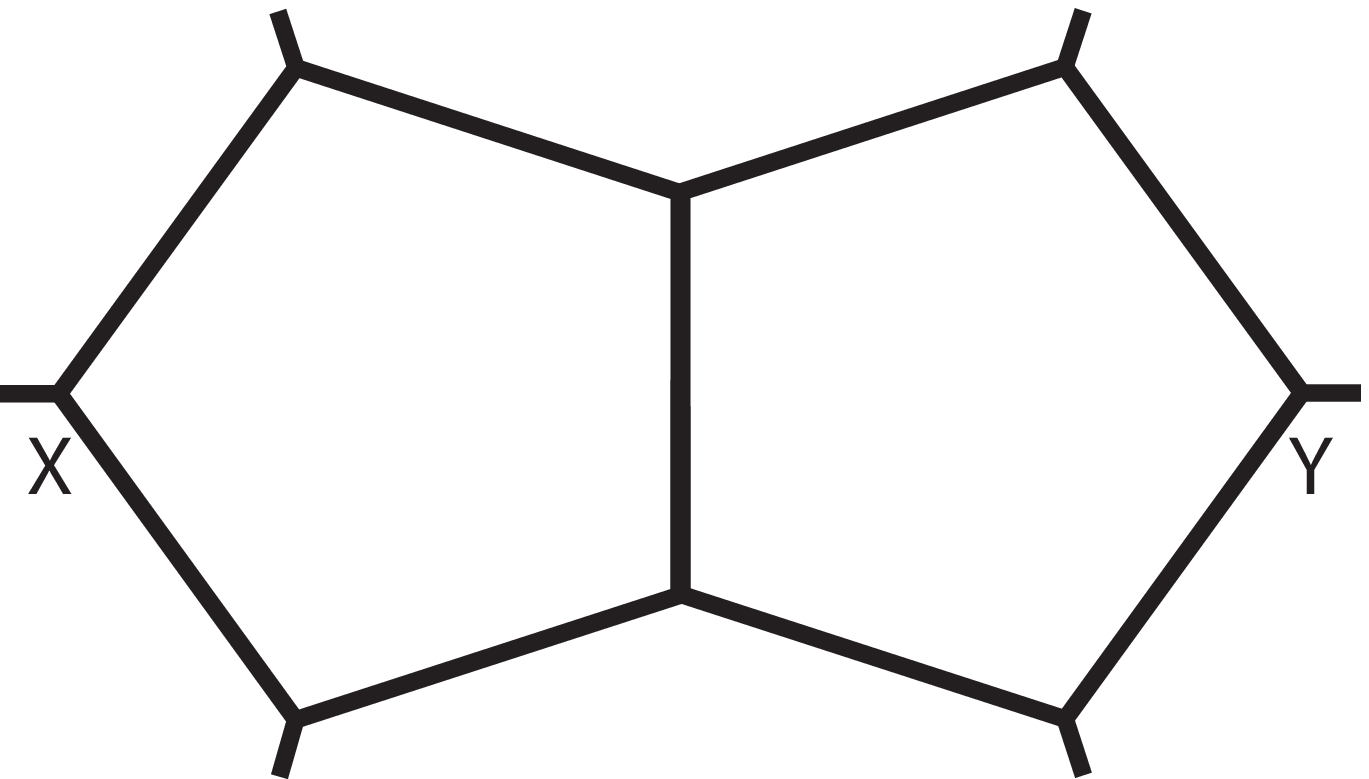}
     \end{tabular}
     \caption{\bf Five and Five Direct}
     \label{fivefivedir}
\end{center}
\end{figure}

\begin{figure}
     \begin{center}
     \begin{tabular}{c}
     \includegraphics[width=6cm]{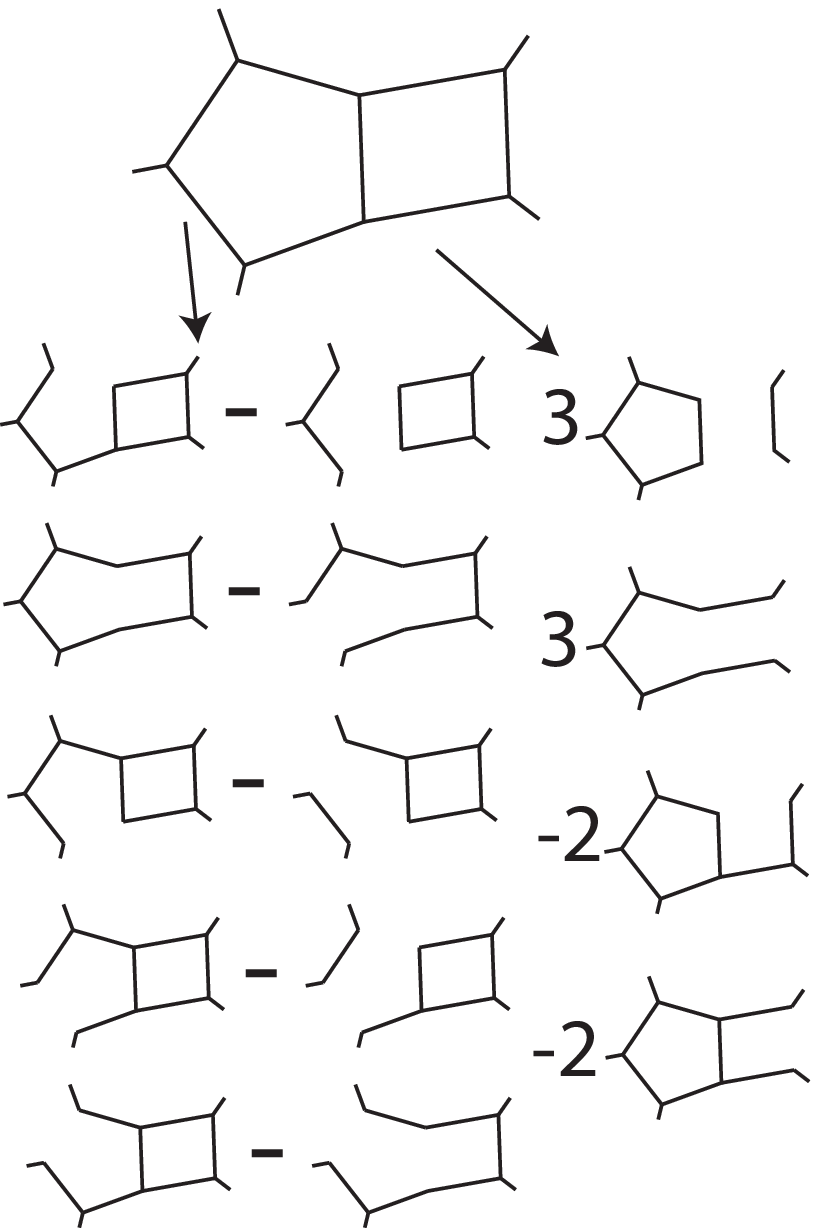}
     \end{tabular}
     \caption{\bf Five and Four Direct}
     \label{fivefour}
\end{center}
\end{figure}

\begin{figure}
     \begin{center}
     \begin{tabular}{c}
     \includegraphics[width=10cm]{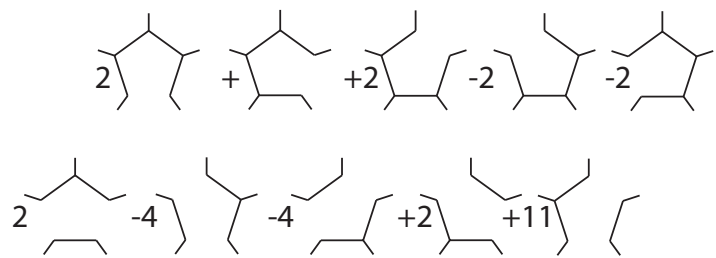}
     \end{tabular}
     \caption{\bf End Result of Five and Four}
     \label{fivefourend}
\end{center}
\end{figure}

\begin{figure}
     \begin{center}
     \begin{tabular}{c}
     \includegraphics[width=8cm]{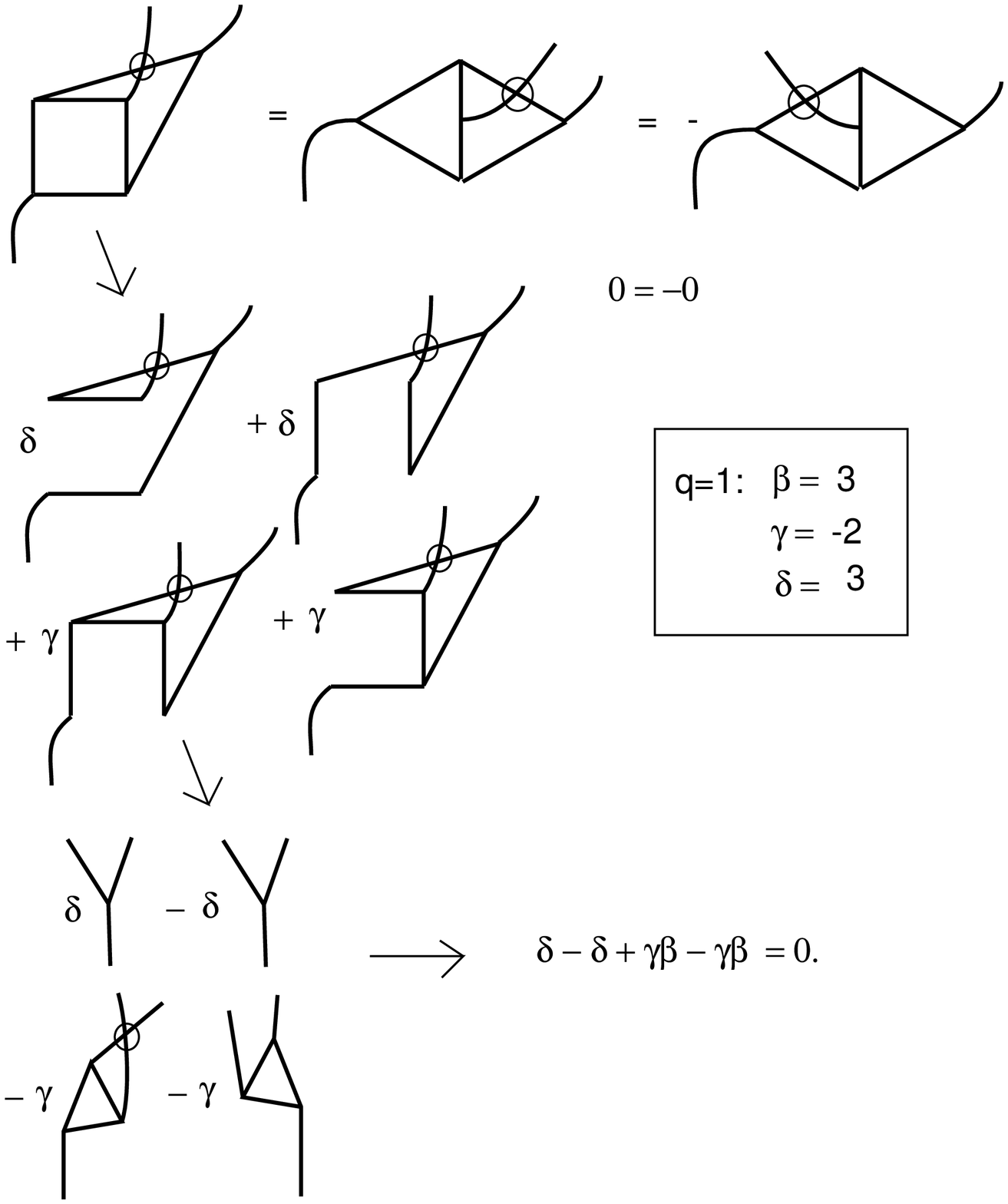}
     \end{tabular}
     \caption{\bf Four Interacting Virtually With Four}
     \label{fourvirtfour}
\end{center}
\end{figure}

\begin{figure}
     \begin{center}
     \begin{tabular}{c}
     \includegraphics[width=8cm]{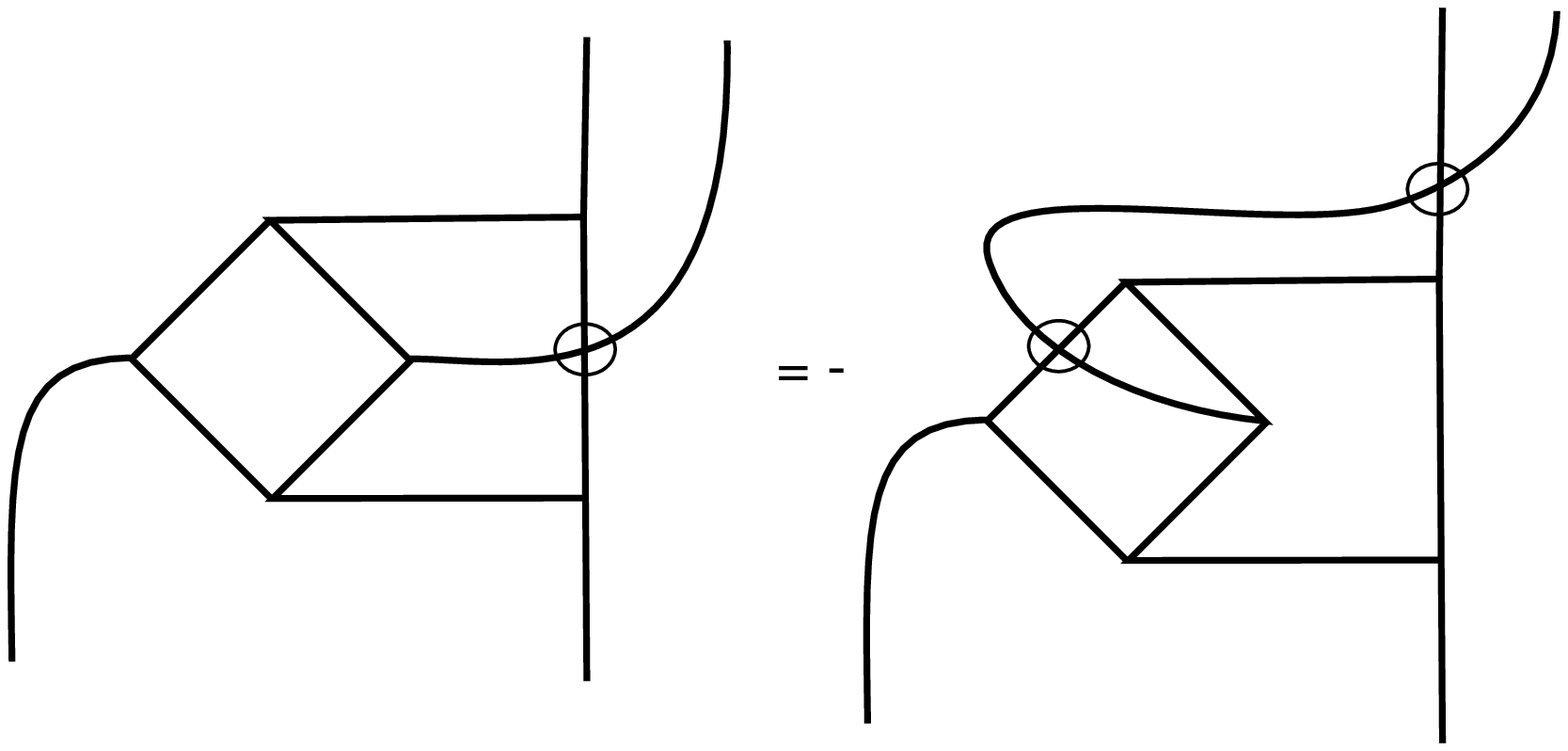}
     \end{tabular}
     \caption{\bf Four Interacting Virtually With Five}
     \label{fourvirtfive}
\end{center}
\end{figure}

\begin{figure}
     \begin{center}
     \begin{tabular}{c}
     \includegraphics[width=7cm]{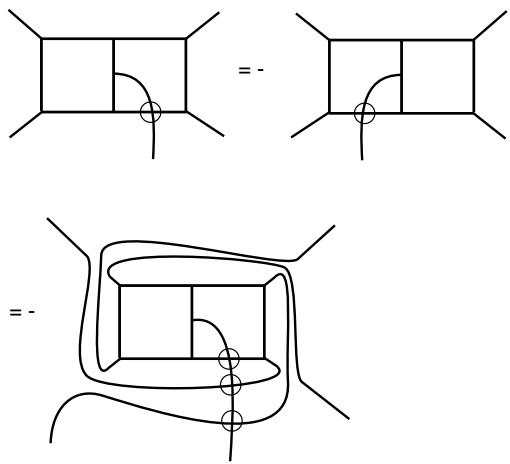}
     \end{tabular}
     \caption{\bf Five Interacting Virtually With Five Twist Identity}
     \label{fivevirtfive}
\end{center}
\end{figure}

\begin{figure}
     \begin{center}
     \begin{tabular}{c}
     \includegraphics[width=7cm]{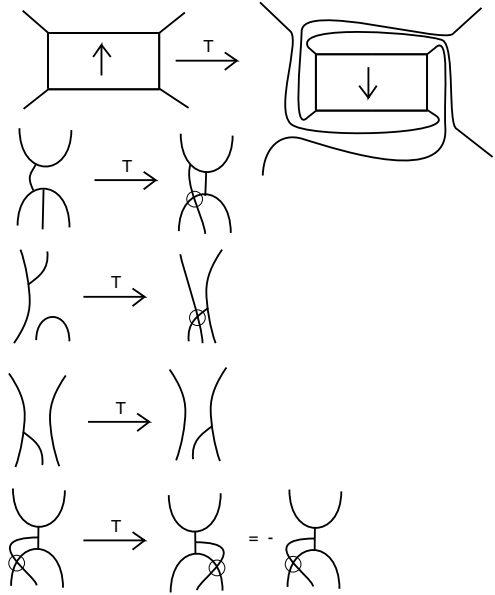}
     \end{tabular}
     \caption{\bf Five Interacting Virtually With Five Insertions}
     \label{fivevirtfiveinsert}
\end{center}
\end{figure}

\begin{figure}
     \begin{center}
     \begin{tabular}{c}
     \includegraphics[width=7cm]{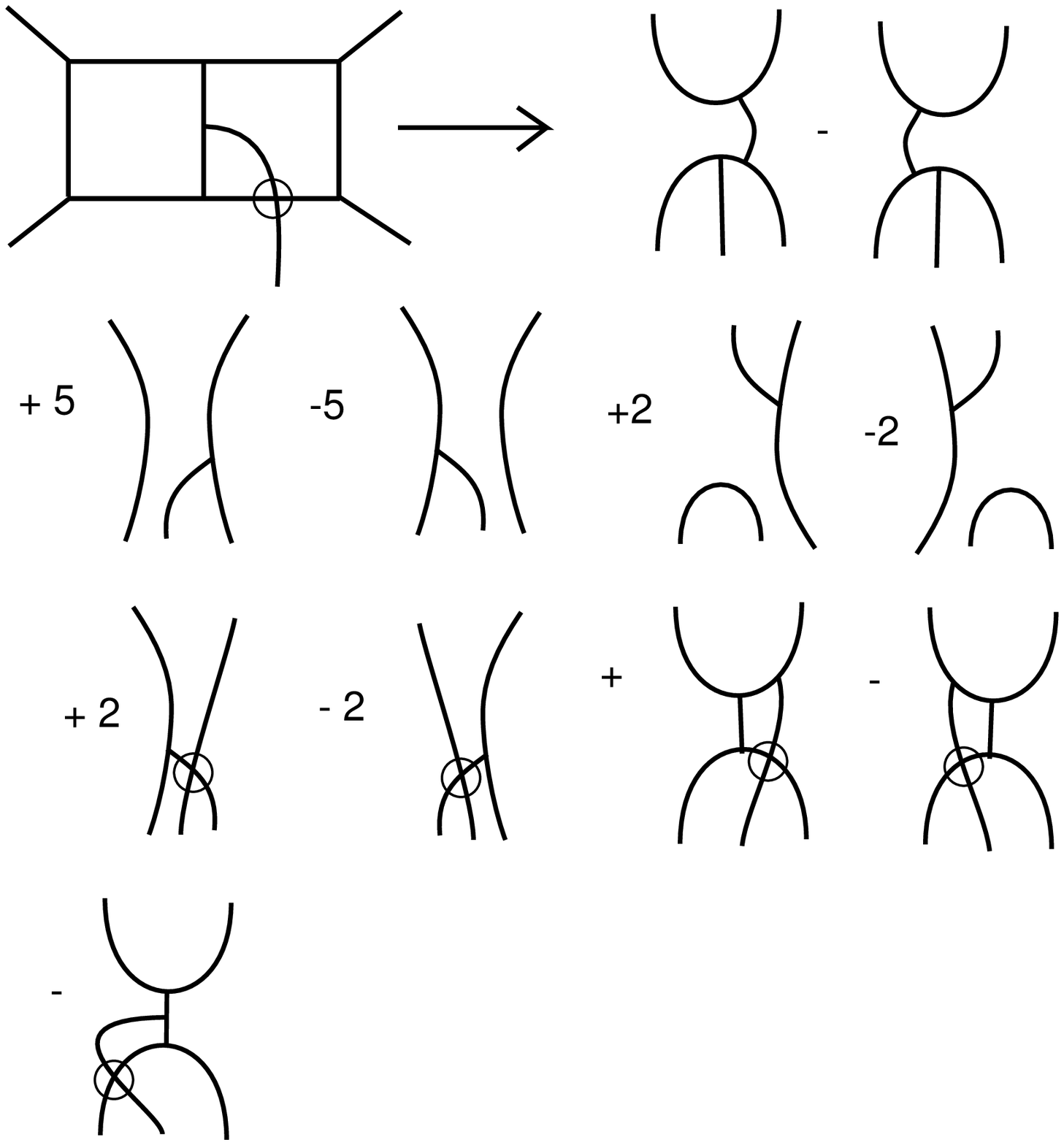}
     \end{tabular}
     \caption{\bf Five Interacting With Five Virtually - The  Expansion}
     \label{fivexp}
\end{center}
\end{figure}

\noindent{\bf Discussion.} For the $sl(3)$ invariant we have the advantage of using the orientation on the original free knot, which leads to the oriented states of the invariant and the restrictions to even numbers o sides to small polygons appearing in the expansion of the knot to the states. The $sl(3)$ invariant can be sometimes used to show non-invertibility of free knots. See \cite{Graph}.
\smallbreak

We are now in possession of the $G_2$ invariant for free knots and links.
We shall refer to it as the {\it free $G_2$ invariant} if necessary.
If a diagram has no bigons, triangles, quadrilaterals or pentagons, then no reduction is possible and the resulting free knot is non-trivial via this invariant. 
\smallbreak

The main advantage of the $so(3)$ invariant is that if there are no triangles in the expansion to graphs of the free knot, then one can reconstruct the free knot from that expansion. Examine Figure~\ref{kupdbl} and note that the $sl(3)$ invariant can, if there are no triangles in the skein expansion,  give us enough information to reconstruct a free knot. (We have referred to \cite{AC} for the theory of the $so(3)$ invariant. Its verification for free knots goes by the same lines as the verifications we have done here for $sl(3)$ and $G_2$.) 
\smallbreak

If there are triangles, then we can hope to use the $G_2$ invariant to make a finer discrimination.
We can combine the information already known from the $sl(3)$ and $so(3)$ invariants to narrow down the possibilities for applying the  $G_2$ invariant. Thus we can assume that we have a diagram that has triangles in its skein expansion. Such an unoriented diagram slips through the discrimination of both the 
$sl(3)$ and $so(3)$ invariants and can be tested with the $G_2$ invariant. We will discuss the construction of such examples in section 6.
\bigbreak

\clearpage

\section{Minimality Theorems and Uniqueness of Minimal Diagrams}

The existence of non-trivial free knots was first proved in $2009$ (See arxiv version of 
\cite{FirstFree}. See also \cite{Sbornik1}.). The proof relied upon the notion of {\em parity}:
the way of discriminating between {\em even} and {\em odd} crossings
of the knot diagram (respectively, chords of the chord diagram). The
simplest example of parity is {\em the Gaussian parity} as described in the third section of this paper. In the third section of this paper, we have discussed Corollary 1 showing that an irreducible odd diagram of a free knot $K$ is minimal in the strong sense that every diagram $K'$ equivalent to $K$ has
a smoothing identical with $K.$
\smallbreak

 Corollary $1$ of the previous section  is a strong statement: for
example, it follows from it that the minimal diagram is unique: if
$K$ is an $n$-crossing diagram, then no other $n$-crossing diagram
$K'$ can have $K$ as a smoothing since non-trivial smoothing leads
to a smaller number of classical crossings.
This result partially solves the recognition problem for free knots,
but it is solved for a very small class of them. Indeed, there
are lots of minimal diagrams of free knots which are minimal and
have even crossings.
\smallbreak
 
The result stated in Corollaries \ref{crl1} and \ref{crl2}
does not claim the uniqueness of the minimal diagram.
The main reason is that we can not restore the initial knot (or
link) $K$ from $K_{us}$ in a unique way. Indeed, given a trivalent
graph, in order to get a framed four-valent graph, we have to
contract some edges (those corresponding to unoriented smoothings of
vertices). This means that we have to find a matching for the set of
vertices of the whole trivalent graph $K_{us}$. But there might be
many matchings of this sort. After performing such a
matching, we still do not have a free knot diagram (framed
four-valent diagram), because we do not
know which incoming edge is opposite to which emanating edge.
Thus, there may be many diagrams $K$ having the same $K_{us}$. Of
course, in some cases, the whole invariant $Free[[K]]$ itself
does distinguish between them, nevertheless, sometimes it is not the
case.
View Figure~\ref{thirdunoriented}.
The bracket $K \mapsto Flat[[K]]$ is certainly invariant under any
Reidemeister moves; so is $K_{us}$ in the case of Corollary
\ref{crl1}.

\begin{figure}
\centering\includegraphics[width=200pt]{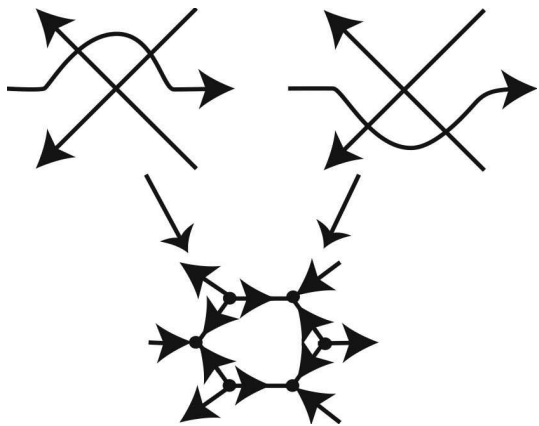}
\caption{\bf The third move does not change the bracket}
\label{thirdunoriented}
\end{figure}

Now, we see that in Figure~\ref{thirdunoriented}, two diagrams which
differ by a third Reidemeister move, lead to the same $K_{us}$.
However, we do not know whether there are different minimal knot
diagrams {\em of different knot types} sharing $K_{us}$ (or, even,
sharing $Free[[K]]$).
If the unoriented third Reidemeister move were the only case when
$Free[[K]]$ does not change then we would have not only a
criterion for minimality of a large class of diagrams, but also a
recognition algorithm for a large class of free knots.
\smallbreak

A free knot diagram (a framed four-valent graph) can not
contain loops or bigons.
If a free knot diagram $K$ contains a triangle, then it is
equivalent to a diagram $K'$ obtained from $K$ by a third
Reidemeister move.
If $K$ has girth greater than or equal to four, then no decreasing
Reidemeister move can be applied to $K$ and no third Reidemeister
move can be applied to $K$.
Thus we arrive at the following natural conjecture about free knots.

\begin{cj}
If a framed four-valent graph $K$ has girth greater than or equal to
$4$ then this diagram is the unique minimal diagram for the knot
class of $K$.
\end{cj}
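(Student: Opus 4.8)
The statement is a conjecture, and the honest situation is that the $sl(3)$ machinery of this section already delivers it for girth $\ge 5$ but leaves a genuine gap at girth exactly $4$; the plan is therefore to isolate that gap and close it with the triangle-free reconstruction furnished by the $so(3)$ bracket. First I would record the elementary translation: for a framed $4$-valent graph, girth $\ge 4$ is equivalent to the absence of loops, bigons and triangles (the first two being automatic for any free knot diagram). By the three observations stated just above the conjecture, this means that no decreasing first or second Reidemeister move and no third Reidemeister move can be applied to $K$; thus $K$ admits no crossing-nonincreasing local simplification whatsoever. The claim then splits into a minimality assertion (no equivalent diagram has fewer crossings) and a uniqueness assertion (every minimal equivalent diagram equals $K$).

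For minimality, the case girth $\ge 5$ is exactly Corollary \ref{crl2}, which guarantees that $K_{us}$ has neither bigons nor quadrilaterals, so Corollary \ref{crl1} applies and $K$ is minimal. The remaining case, girth $=4$, is the crux: here $K$ contains a $4$-cycle, the unoriented state $K_{us}$ can acquire quadrilaterals, and the $sl(3)$ argument of Corollary \ref{crl1} is no longer available. I would replace it by the $so(3)$ invariant. Since $K$ is triangle-free, its leading state carrying a double edge at every vertex is $K$ itself, and this state is irreducible, so it survives with nonzero coefficient in the $so(3)$ evaluation. Equality of the $so(3)$ invariants of $K$ and of any equivalent $K'$ then forces $K'$ to possess a state whose double-edge graph contains a smoothing of $K$, whence $cr(K') \ge cr(K)$, exactly as in the proof of Corollary \ref{sld} but with double edges in place of the $sl(3)$ subgraph condition.

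Uniqueness I would obtain by sharpening the same reconstruction. If $K'$ is a second minimal representative with girth $\ge 4$, then $K$ appears among the reconstructible states of $K'$; but minimality of $K'$ forbids any nontrivial smoothing of $K'$ from retaining the full crossing number, so the state realizing $K$ must be $K'$ with no smoothing performed. This identifies $K'$ with $K$ as framed $4$-valent graphs. In spirit this is the triangle-free sharpening of Theorem \ref{mainthm} and Corollary \ref{sld}: the no-triangle hypothesis is precisely what lets the $so(3)$ bracket record the whole diagram rather than merely a subgraph, and the symmetric minimality of the two diagrams then upgrades \emph{contains} to \emph{equals}. I would prefer this invariant route over a direct analysis of Reidemeister sequences between $K$ and $K'$, since the latter reduces to a confluence problem for flat moves that is precisely the kind of obstruction that makes the recognition problem hard.

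The main obstacle lives entirely in the girth-$=4$ case. The $sl(3)$ invariant is blind there, so everything rests on showing that the $so(3)$ reconstruction is both valid and sharp when the diagram is triangle-free but carries $4$-cycles: one must check that the smoothings at even vertices, together with the double-edge reduction calculus, never create a triangle that would spoil reconstruction, and that no $4$-cycle collision produces a competing irreducible state realizing $K$ with fewer crossings. Equivalently, one must rule out that a non-minimal, triangle-bearing representative $K'$ could be driven by third Reidemeister moves and decreasing second moves into a distinct girth-$\ge 4$ diagram of the same free knot. Controlling these quadrilateral and double-edge collisions is a bookkeeping problem of the same character as the $G_2$ verifications of this section, and it is exactly the point at which a complete argument is still missing; this is why the statement is offered as a conjecture rather than a theorem.
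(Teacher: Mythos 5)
You have correctly recognized that this statement is offered as a conjecture, and your write-up, like the paper, stops short of a proof; the useful comparison is therefore between the two sets of partial results. Where you overlap with the paper: both of you obtain the minimality half for girth $\ge 5$ from Corollary \ref{crl2} (hence Corollary \ref{crl1}), and both of you locate the real difficulty in the girth-$4$ case and in uniqueness. Where you diverge: the paper stays inside the $sl(3)$ framework and identifies the exact obstruction there --- a quadrilateral in $K$ contributes a quadrilateral to $K_{us}$ only when all four of its edges emanate from two opposite vertices (Figure~\ref{quadrispecialsort}), so minimality already holds for girth-$4$ diagrams avoiding that configuration; for uniqueness the paper points only to the ``matching ambiguity'' (many framed four-valent graphs share the same $K_{us}$) and concedes that essentially nothing is known. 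You instead propose abandoning $sl(3)$ at girth $4$ in favour of the $so(3)$ bracket and its triangle-free reconstruction property, which, if it worked, would also deliver uniqueness --- something the paper's route does not even attempt. That is a genuinely different and sensible strategy, consistent with the paper's own remark in Section 4 that the $so(3)$ invariant permits reconstruction of the free knot when no triangles appear in the expansion. But note that the step you flag as missing is the whole content: girth $\ge 4$ constrains cycles in $K$ itself, not in the smoothed and double-edge-reduced $so(3)$ states, so ``triangle-free diagram'' does not automatically yield ``triangle-free expansion,'' and until that is controlled your uniqueness argument (forcing the state of $K'$ realizing $K$ to be $K'$ with no smoothing performed) is no further along than the paper's. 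Both accounts honestly leave the conjecture open; yours proposes a concrete alternative attack, while the paper's pins down the precise residual configuration blocking the $sl(3)$ attack.
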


In fact, the ``minimality'' part partially follows from Corollary
\ref{crl2}. Indeed, all diagrams of girth $5$ and many diagrams of
girth $4$ satisfy the conjecture. For a diagram of girth $4$ to
satisfy the conjecture, one should forbid the quadrilaterals of the
following sort (see Figure~ \ref{quadrispecialsort}) which lead to
quadrilaterals in $K_{us}.$

\begin{figure}
\centering\includegraphics[width=200pt]{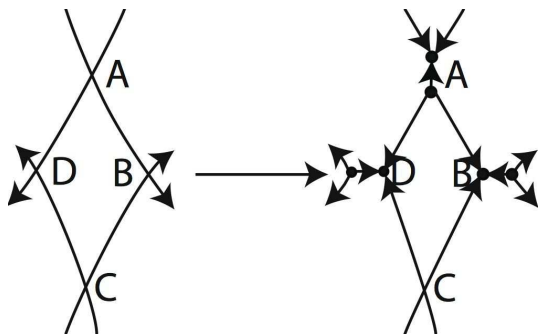}
\caption{\bf Quadrilaterals of a special sort in $K$ yield
quadrilaterals in $K_{us}$} \label{quadrispecialsort}
\end{figure}

For such a quadrilateral, there are two opposite vertices (say, $A$
and $C$) such that all the four edges of the quadrilateral emanate
from $A$ and $C$ (and come into the other two vertices, $B$ and
$D$).
Concerning the uniqueness of such minimal diagram, a very little is
known because of the ``matching ambiguity'' discussed above.
\smallbreak

\subsection{Minimality related to  $G_{2}$}
\begin{dfn}
Let $G$  be a flat (resp. free) trivalent graph.
We shall say that $G$ {\it contains} a graph $G'$ if $G'$ can be obtained from $G$ by a squence of the Kuperberg reduction rules for the flat(resp. free) $G_{2}$ invariant.
\end{dfn}

Let $K$ be a flat (resp. free) link diagram.
Note that all those graphs $K_{s}$ corresponding to various leading states
$s$ of $K$, appear in the expansion of $\{\{K\}\}_{Free}$ (resp., $\{\{K\}\}_{Flat}$)
with the coefficient $1/2^{n}.$ Some of these graphs are reducible, and some of them are not. 
However, no irreducible leading states will cancel in the state sum.
This leads to the following

\begin{thm}
Assume $K$ be a flat (resp. free)  $4$-regular graph, and let $S$ be a leading state of $K$
such that $K_{s}$ is irreducible in the flat (resp., free) category.
Then every graph $K'$ equivalent to $K$ as a flat (resp., free) link 
 has a state that contains $K_{s}.$

In particular, $G$ is minimal, non-classical and non-trivial.
\label{thmin}
\end{thm}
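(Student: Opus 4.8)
The plan is to follow the template of Corollary~\ref{crl1} for the $sl(3)$ bracket, replacing the oriented expansion by the $G_2$ crossing expansion of Figure~\ref{crossing} and the reduction calculus of Figure~\ref{looptrisquare} and Figure~\ref{pentagon}. Since the free (resp. flat) $G_2$ evaluation $\{\{\cdot\}\}_{Free}$ (resp. $\{\{\cdot\}\}_{Flat}$) is an invariant of free (resp. flat) links, as established in the construction above, we have $\{\{K\}\}_{Free}=\{\{K'\}\}_{Free}$ as elements of the module of reduced graphs with integer coefficients. Because this module is freely generated by irreducible graphs, it suffices to compare the coefficient of the single fixed irreducible graph $K_s$ on the two sides.

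First I would show that $K_s$ occurs with nonzero coefficient in $\{\{K\}\}$; this is exactly the assertion that irreducible leading states do not cancel, and it rests on a vertex-counting argument. A leading state of the $n$-vertex graph $K$ carries $2n$ trivalent vertices, the maximum possible, because each of the two leading local resolutions inserts an extra edge between two new trivalent nodes, whereas every non-leading resolution produces strictly fewer nodes. All Kuperberg reductions (loop, triangle, square, pentagon) strictly decrease the number of trivalent vertices, so no reduction can raise a state to $2n$ vertices. Consequently the only states whose reductions can produce the $2n$-vertex irreducible graph $K_s$ are leading states, and among these $K_s$ itself requires no reduction. Since every leading state enters $\{\{K\}\}$ with the same coefficient $1/2^n$, the contributions to the $K_s$ term share one sign and add rather than cancel; hence the coefficient of $K_s$ in $\{\{K\}\}$ is a positive multiple of $1/2^n$, in particular nonzero.

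Transporting this through the equality $\{\{K\}\}=\{\{K'\}\}$, the graph $K_s$ must appear with the same nonzero coefficient in the expansion of any equivalent $K'$. As $\{\{K'\}\}$ is the sum, over the states of $K'$, of their reduced forms, at least one state $K'_{s'}$ must reduce to a combination in which $K_s$ survives; by the definition of containment this says precisely that $K'$ has a state that contains $K_s$. For minimality, if $K'$ has $n'$ vertices then its states carry at most $2n'$ trivalent vertices, and a state containing the $2n$-vertex graph $K_s$ can have no fewer than $2n$ vertices after reduction, so $2n\le 2n'$ and $n\le n'$. Non-classicality and non-triviality then follow because a planar (classical) diagram, and likewise the trivial diagram, reduces entirely to a scalar by the planar $G_2$ calculus, leaving no irreducible graph such as $K_s$ in the evaluation.

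The main obstacle is the no-cancellation step. The delicate point is the sign bookkeeping forced by the transposition rule at trivalent vertices (the sign change of Figure~\ref{looptrisquare}): to be sure that distinct leading states reducing to the same $K_s$ reinforce rather than annihilate one another, I must confirm that the uniform coefficient $1/2^n$ recorded for leading states already absorbs these signs consistently, so that every leading contribution to a fixed irreducible graph carries one and the same sign. Verifying that the $q=1$ specialization makes this sign assignment coherent, together with the independence of the reduction order guaranteed by the collision checks of Figures~\ref{fivefivedir}, \ref{fivefour} and \ref{fivefourend}, is where the real work lies; the counting and invariance portions are then routine.
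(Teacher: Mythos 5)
Your argument is essentially the paper's: the paper proves Theorem~\ref{thmin} by observing that all leading states enter $\{\{K\}\}$ with coefficient $1/2^{n}$ and that irreducible leading states cannot cancel in the state sum, then transporting this through the invariance of the $G_2$ bracket exactly as you do (following the template of Corollary~\ref{crl1}). Your vertex-counting justification of the no-cancellation step, and your flagging of the sign bookkeeping at trivalent vertices, supply detail that the paper leaves implicit but do not change the route.
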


Note that one can construct free knot diagrams satisfying the conditions of Theorem \ref{thmin}
with all even crossings, having some triangles and not satisfying minimality conditions
for $sl(3)$ and $sl(3)$ graphical invariants for free knots. For example, this can be achieved by
using chord diagrams.

\section{Examples}
This section gives examples related to the discussions in the paper. 
The first example we consider is given in Figure~\ref{octalink}. Here we have an example of a flat virtual 
link diagram $L.$ In Figure~\ref{octastate} we illustrate a state of this link in the expansion of the flat $sl(3)$ bracket $\{\{L\}\}_{Flat}.$ We call this the ``octahedral state" of $K.$  In 
Figure~\ref{octasurface} we show how the state (and hence the corresponding link) embeds in a surface with boundary. We now analyze this state graph $G$ and show that it is irreducible in the $sl(3)$ bracket expansion of $K,$ thus proving that $K$ is a non-trivial flat link.
\bigbreak

Examine the surface $S$ in Figure~\ref{octasurface} and it is apparent that it has four boundary components with the following numbers of edges from the graph for the corresponding faces:
$8,8,16,16.$ Thus there are no small faces in this graph and so it is not possible to make any reductions of the graph in the setting of the $sl(3)$ invariant. It is not hard to see that this state receives a non-zero coefficient in the invariant, and hence the link $L$ is a non-trivial flat link. Incidentally, if $F$ is the surface obtained from $S$ by adding a disc to each boundary component, and $g$ is the genus of $F,$ then we have $v = 16, e = 24, f = 4$ where $v,e,f$ denote the number of vertices, edges and faces of the corresponding decomposition of $F.$ Then $2-2g = v - e + f = 16 -24 + 4 = -4$ and hence $g = 3.$ The surface $F$ has genus equal to three. Note that this means that the link $L$ cannot be represented on any surface of genus smaller than $3$ since the irreducible state graph $G$ must appear in every 
such representation. Since $L$ can be represented in genus three, this proves that $L$ has surface genus three as a flat link. 
\bigbreak

The example shown in Figure~\ref{hard} may be a non-trivial free knot, but it is not detected by the invariants discussed in this paper.
\bigbreak

\begin{figure}
     \begin{center}
     \begin{tabular}{c}
     \includegraphics[width=8cm]{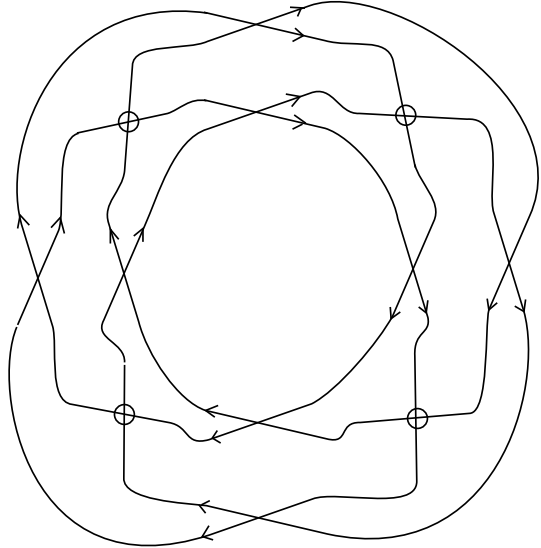}
     \end{tabular}
     \caption{\bf The flat link L}
     \label{octalink}
\end{center}
\end{figure}

\begin{figure}
     \begin{center}
     \begin{tabular}{c}
     \includegraphics[width=8cm]{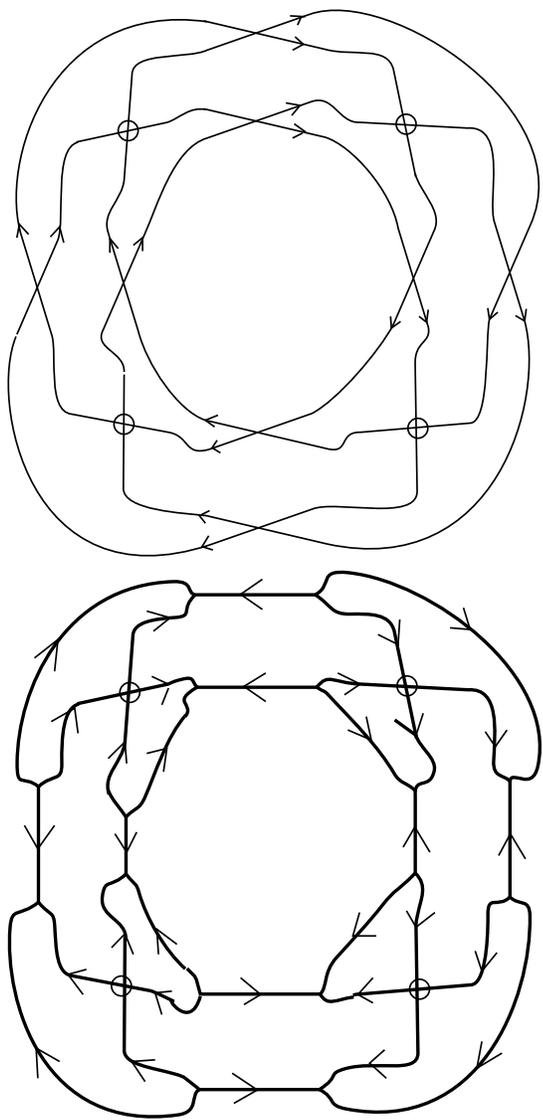}
     \end{tabular}
     \caption{\bf A state of the flat link L}
     \label{octastate}
\end{center}
\end{figure}

\begin{figure}
     \begin{center}
     \begin{tabular}{c}
     \includegraphics[width=8cm]{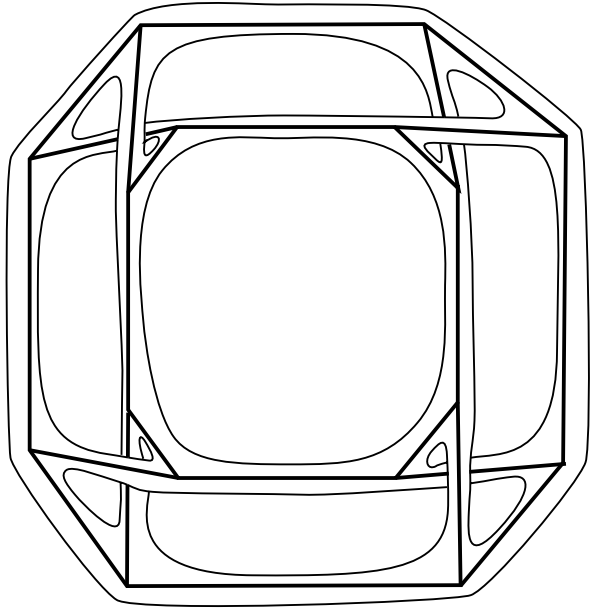}
     \end{tabular}
     \caption{\bf A surface for the state of the flat link L}
     \label{octasurface}
\end{center}
\end{figure}

\begin{figure}
     \begin{center}
   \begin{tabular}{c}
    \includegraphics[width=8cm]{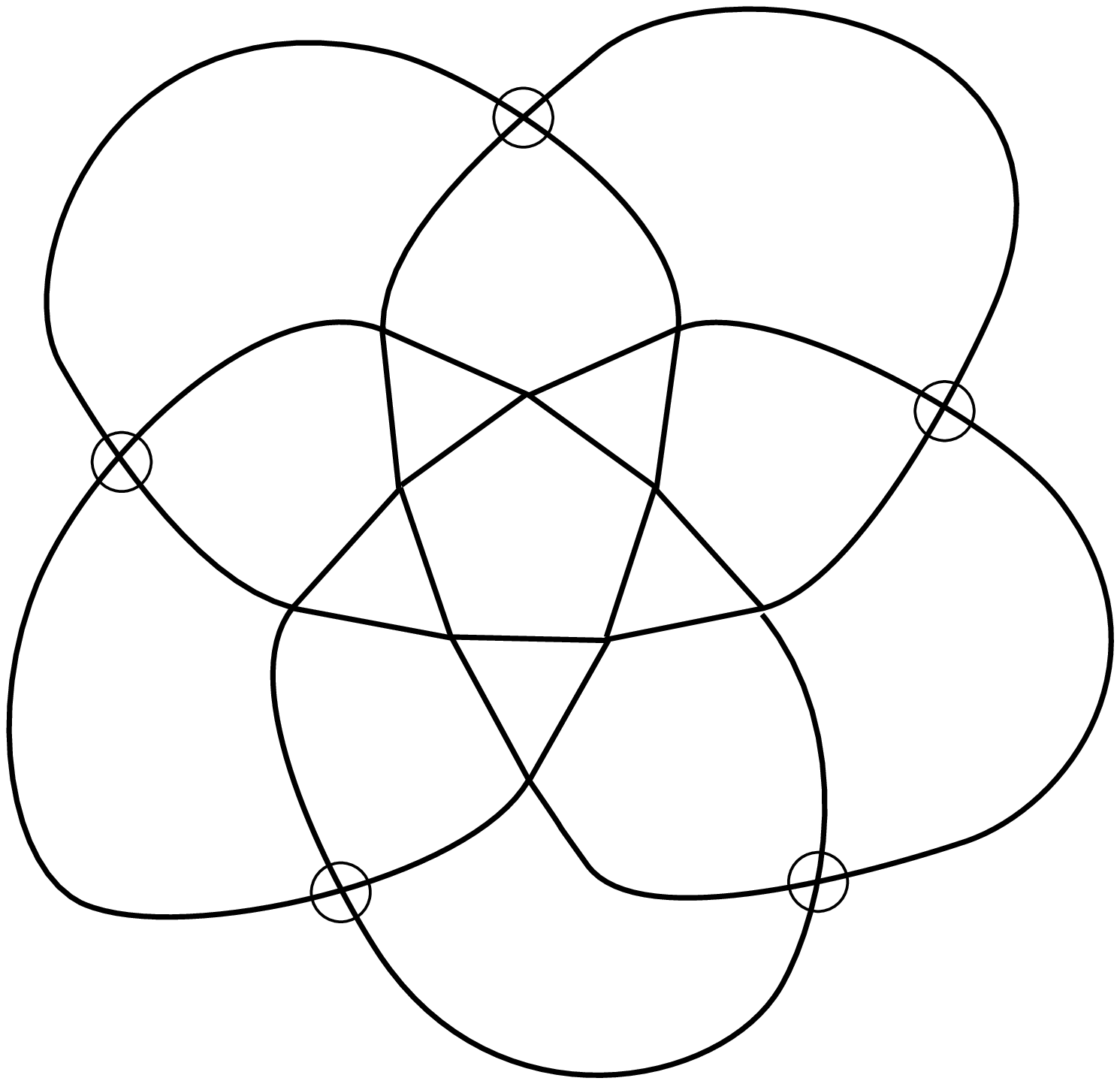}
     \end{tabular}
     \caption{\bf Possibly Irreducible but Undectectable by $G_2$ Invariant}
    \label{hard}
\end{center}
\end{figure}

The next example, depends  on Figure~\ref{wtick} and Figure~\ref{whirl}.
In Figure~\ref{whirl} we show a free knot $G$ where the dark nodes are the classical vertices of this free knot and the simple crossings are the virtual crossings (since there are many virtual crossings we deemed this change of convention necessary). We show that the $G_{2}$ bracket detects the non-triviality of $G$ by exhibiting a state that survives in the invariant and is irreducible. The state is constructed in analogy with the state construction shown in Figure~\ref{wtick}. In Figure~\ref{wtick}
we show a typical pair of adjacent classical vertices and we show the corresponding $G_2$ state.
We illustrate this in Figure~\ref{wtick} for a circle of $3$ vertices and ask the reader to consider the same pattern for the $17$ vertices of $G$ in Figure~\ref{whirl}. It is then apparent to inspection that $G$ has no cycles (just as a graph with no restrictions on framing at crossings) of small length ($2,3,4$ or $5$) and that the same is true of the corresponding state $S$ of $G.$ Thus $S$ is irreducible and it is easy to see that $S$ appears with non-zero coefficient in the $G_2$ invariant of $G,$ since any irreducible sate with a maximal number of paired trivalent replacements has positive coefficient in the invariant. This completes the proof that $G$ is non-trivial.
\bigbreak

We should further remark that there is a special state that can be constructed from a free knot $L.$
This state is the one where crossings are replaced by pairs of trivalent vertices such that the resulting state graph is bipartite. Call this state $B(L).$ The reader will be interested to find examples of free knots that are distinguished by their bipartite state.
\bigbreak

\begin{figure}
     \begin{center}
     \begin{tabular}{c}
     \includegraphics[width=8cm]{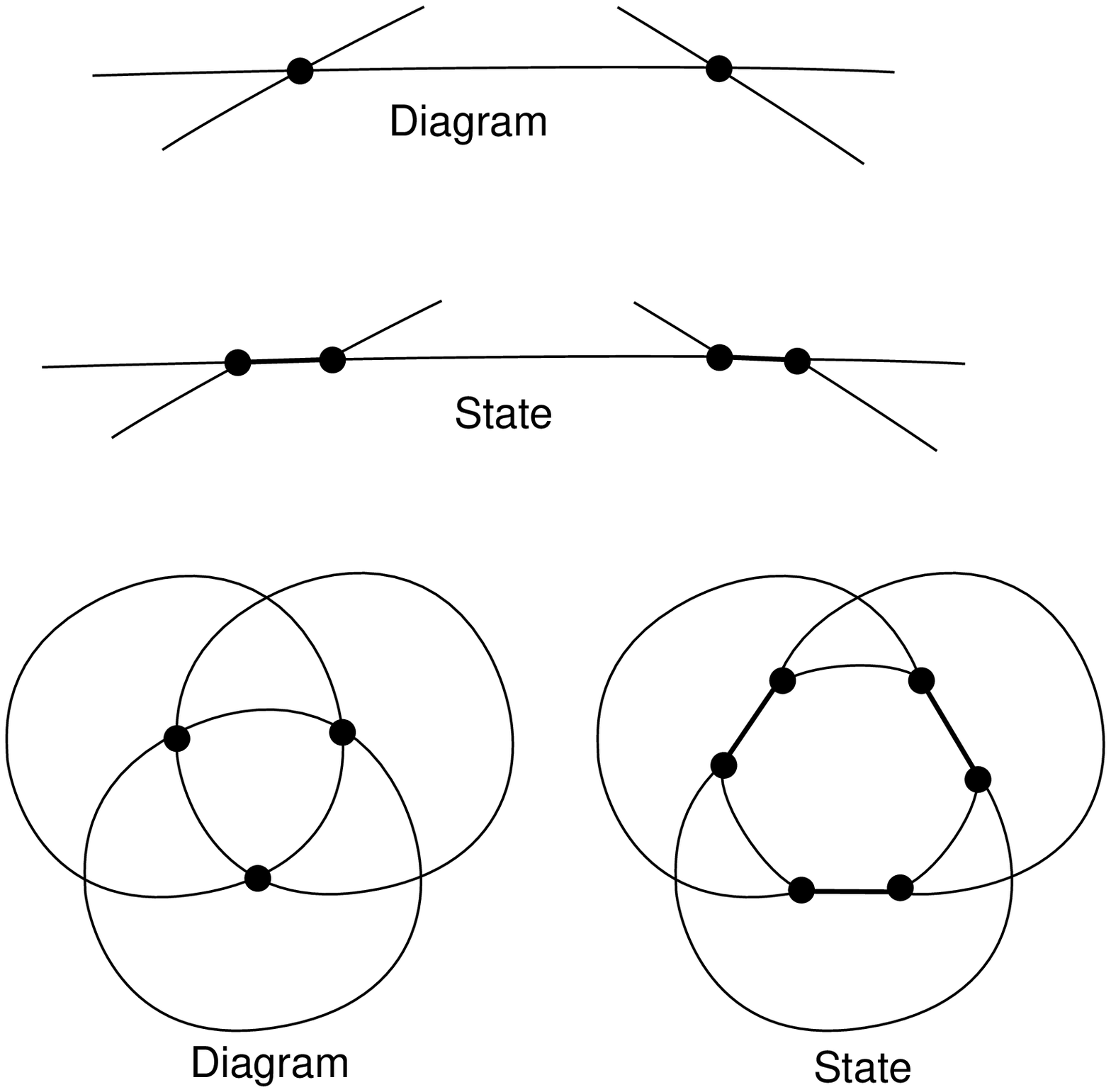}
     \end{tabular}
     \caption{\bf Diagram and State}
     \label{wtick}
\end{center}
\end{figure}

\begin{figure}
     \begin{center}
     \begin{tabular}{c}
     \includegraphics[width=8cm]{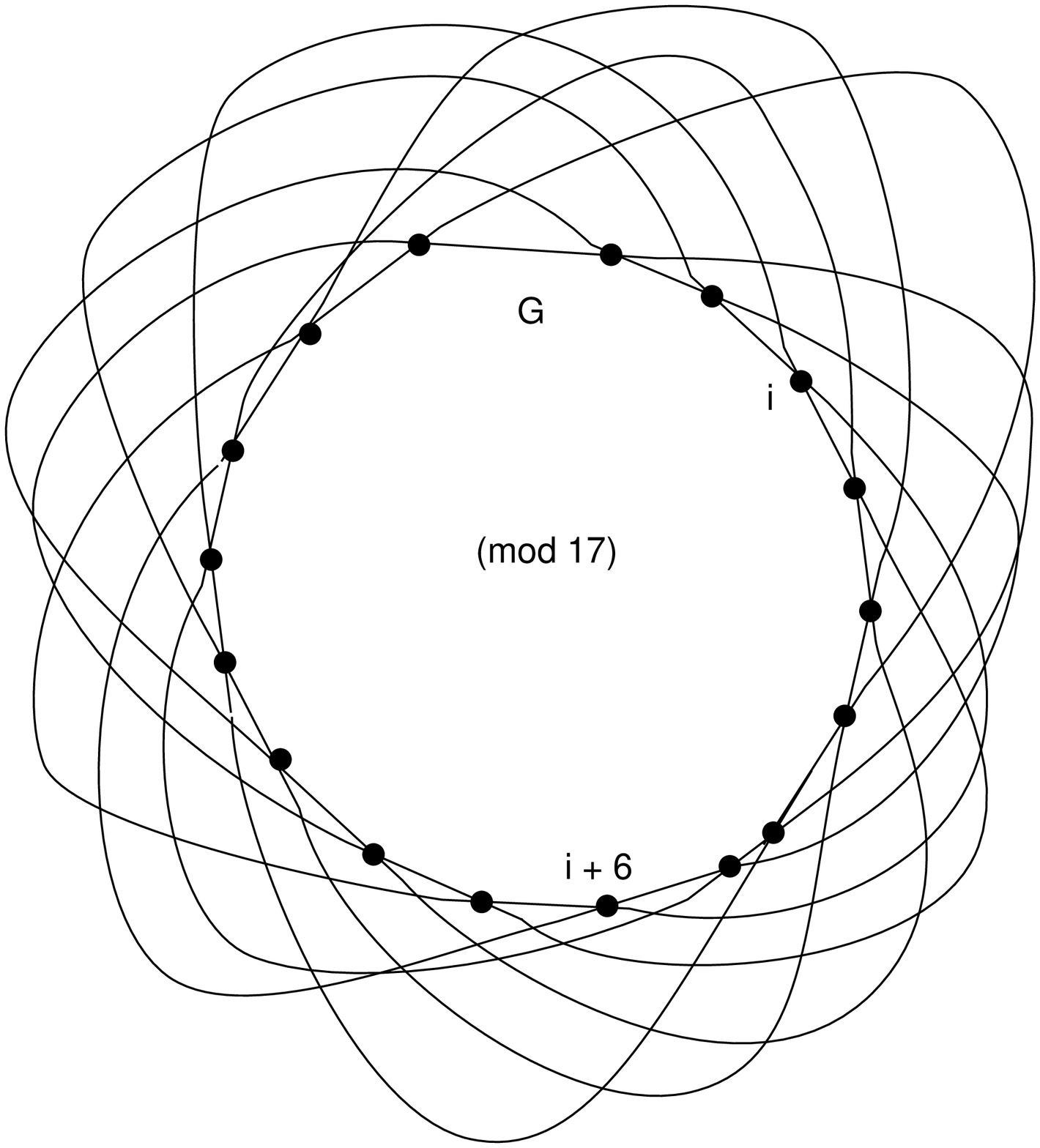}
     \end{tabular}
     \caption{\bf A Non-trivial Free Knot G}
     \label{whirl}
\end{center}
\end{figure}

\bigbreak

\section{On the Penrose Coloring Bracket}

A {\it proper edge $3$-coloring} of a trivalent graph is an
assignment of three colors (say from the set $\{ 0,1,2 \}$) to the
edges of the graph such that three distinct colors appear at every
vertex. In \cite{Penrose} R.Penrose gives a bracket that counts the
proper edge $3$-colorings of a trivalent plane graph. \smallbreak

\begin{figure}
     \begin{center}
     \begin{tabular}{c}
     \includegraphics[width=8cm]{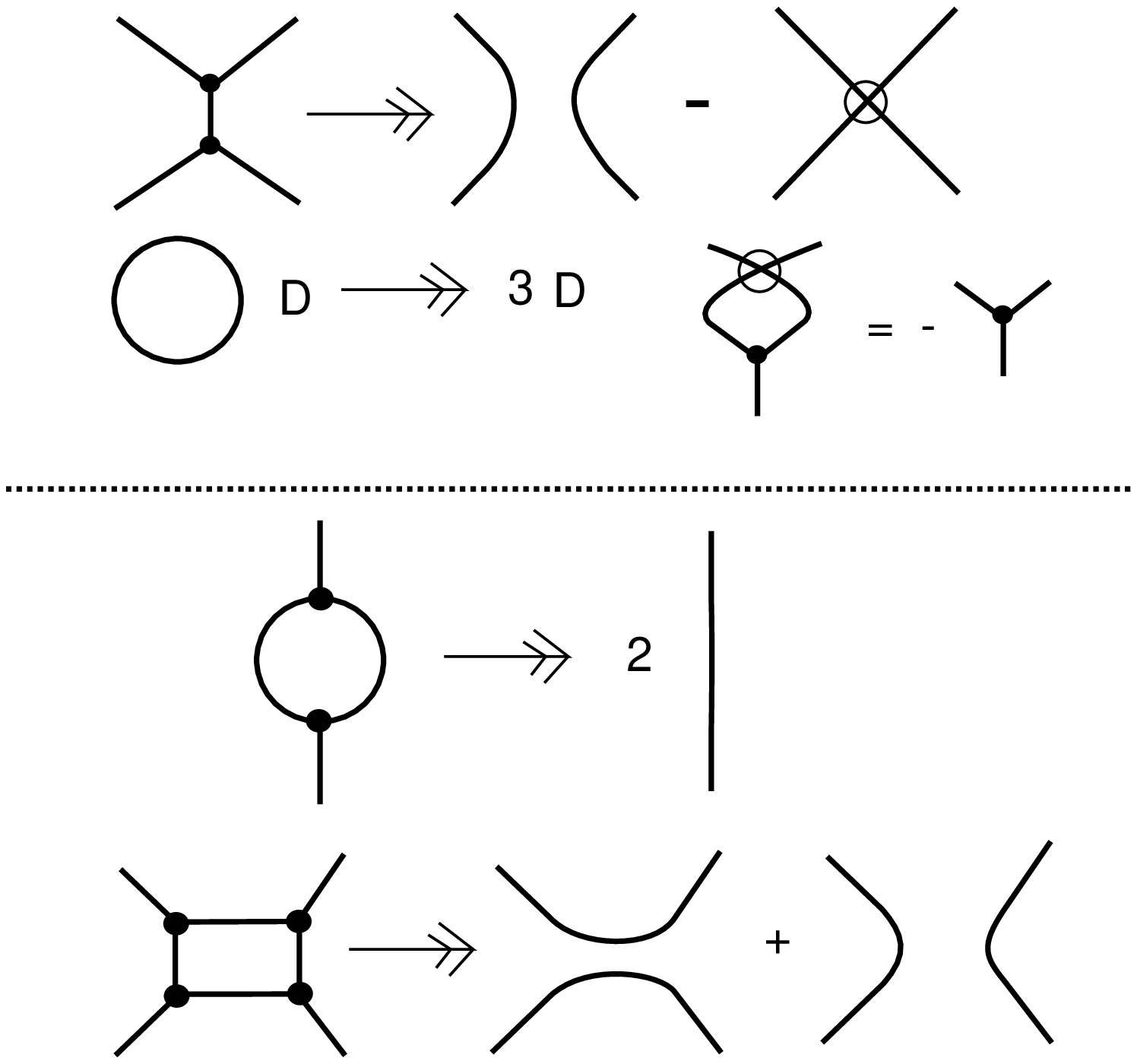}
     \end{tabular}
     \caption{\bf Penrose Bracket and Identities}
     \label{penrose}
\end{center}
\end{figure}

\begin{figure}
     \begin{center}
     \begin{tabular}{c}
     \includegraphics[width=8cm]{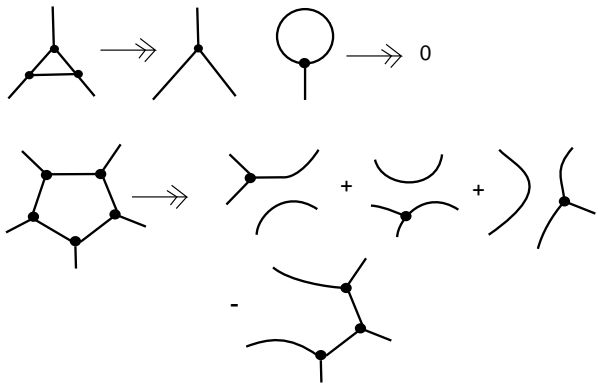}
     \end{tabular}
     \caption{\bf Further Penrose Bracket Identities}
     \label{penrose2}
\end{center}
\end{figure}

\begin{figure}
     \begin{center}
     \begin{tabular}{c}
     \includegraphics[width=8cm]{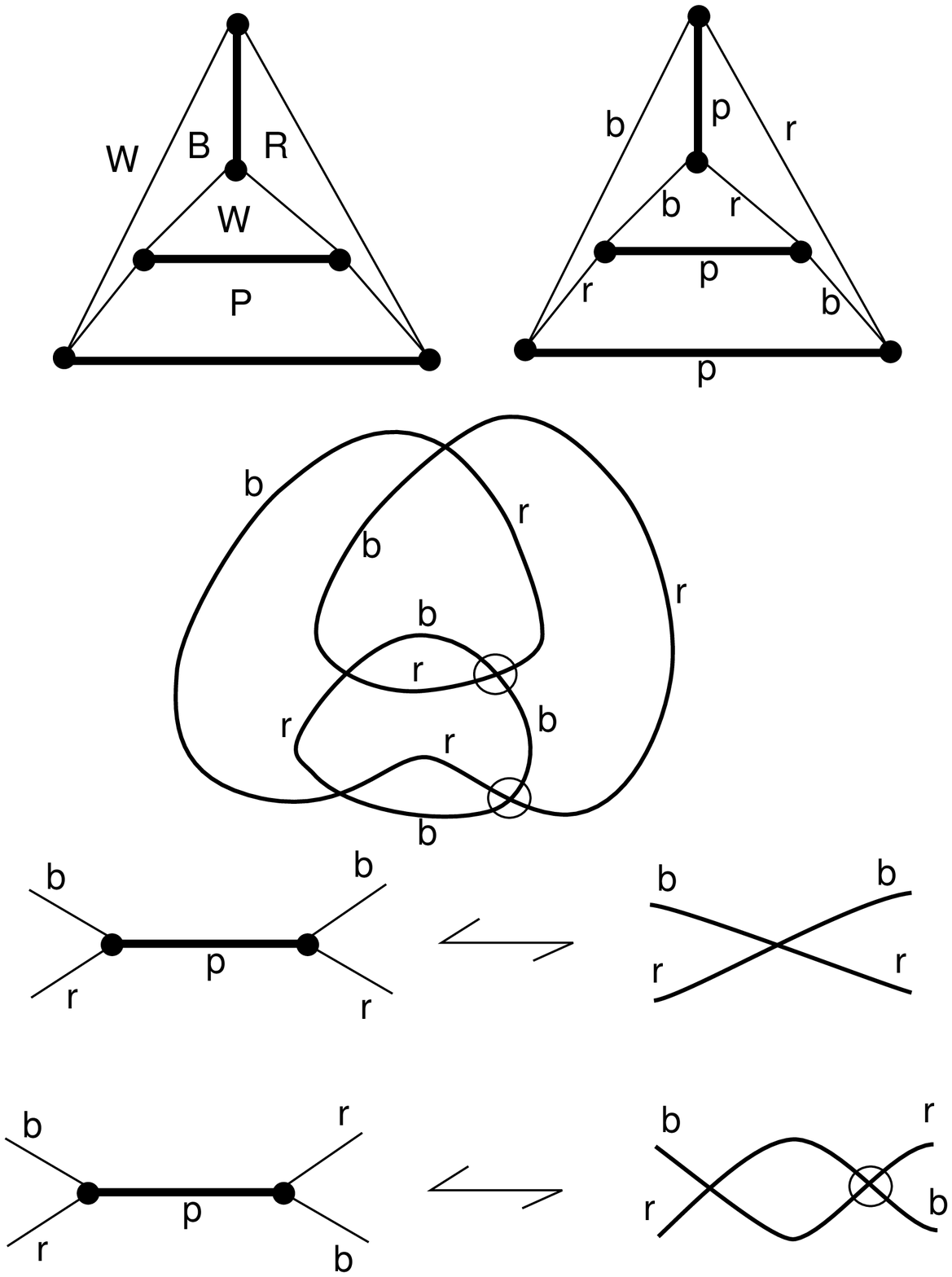}
     \end{tabular}
     \caption{\bf Coloring Graphs and Free Knots}
     \label{penrose3}
\end{center}
\end{figure}

The Penrose bracket \cite{Penrose} is defined as shown in
Figure~\ref{penrose}. Penrose devised this bracket and other
combinatorial expansions based on abstract tensors. In the case of
this Penrose bracket, we will show that it is a special case of the
Kuperberg $sl(3)$ bracket and explain how flat virtul knots are
related to classical graph coloring problems. \bigbreak

The Penrose bracket is an evaluation of trivalent graphs immersed in
the plane so that cyclic orders at the nodes are given. It can be
defined independently of the planar immersion so long as the cyclic
orders at the nodes are specified, but counts colorings correctly
only for planar {\it embeddings.} Thus the Penrose bracket is
well-defined for virtual graphs (See Definition~\ref{vgraphs}.) as we have defined them earlier in
the paper. Note that in the expansion formula for the Penrose
bracket we have one term with crossed lines and a virtual crossing.
In completely expanding a graph by this formula one has many curves
with virtual self-crossings. Each curve is evaluated as the number
$3$ and thus the evaluation of the Penrose bracket is a signed sum
of powers of $3.$ \smallbreak

Note in  Figure~\ref{penrose} the identities that we have listed.
These follow from the the expansion formulas above the line. It is
then apparent that the evaluation of the Kuperberg $sl(3)$ bracket
at $A=1$ on an oriented trivalent plane graph is equal to the value
of the Penrose bracket for this graph. The Kuperberg bracket is
calculated by the same reduction formulas as the Penrose bracket,
and for the oriented graph, we only need the bigon and quadrilateral
reductions. This observation is the main reason we mention the
Penrose bracket in this context. \smallbreak

In Figure~\ref{penrose2}  we show the other reduction identities for
the Penrose bracket (for triangles, pendant loops and five-sided
regions). These identites can be used to evaluate the Penrose
bracket for unoriented plane graphs, since an Euler characteristic
argument tells us that any trivalent graph in the plane with no
isthmus must have a region with less then six sides. We can take the
reduction formalism of the Penrose bracket and attempt to use it to
create new invariants of free knots. This will be the subject of a
separate paper. \smallbreak

Finally, we point out a relationship between coloring maps, graphs
and free knots and links. Examine Figure~\ref{penrose3}. At the top
left of the figure we show a trivalent graph in the plane with a
four-coloring of its regions by the colors $\{W,R,B,P\}.$ Two
regions that share an edge are colored with different colors. This
graph is a simplest example of a plane graph that requires four
colors. Adjacent to the coloring of the regions of this graph we
have illustrated a coloring of its edges by the three colors
$\{r,b,p\}$ so that every node of the graph has three distinct
colored edges incident to it. The edge coloring can be obtained from
the face coloring by regarding $\{W,R,B,P\}$ as isomorphic to the
group $\Z_{2} \times \Z_{2}$ with $W$ the identity element and $R^2
= B ^2 = P^2 = W$ and $RB = BR = P, RP = PR = B, BP = PB = R.$ Then
the edge coloring is obtained by coloring each edge with the product
of the region colors on either side of it. \smallbreak

\begin{dfn}
Call a component  in a free link diagram (represented as a planar
diagram) {\it even} if there are an even number of virtual crossings
on this component that occur between the given component and other
components of the link. (See Section 2.3 for definitions related to
framed and free knots and links.) Call a free link diagram {\it
componentwise  even} if every component of the link is even. Note
that a single component free knot diagram is componentwise even
since there are zero virtual crossings between the one component and
any other component.
\end{dfn}

In the right hand side in Figure~\ref{penrose3} we illustrate a free
link diagram  that is obtained from the edge-colored graph above it
by using the translations illustrated  below the free link diagram.
In these translations we use two-colors for the edges of a free
link. {\it Opposite edges are colored by different colors from the
binary set $\{ r,b \}$. }

\begin{dfn}
We call a free link diagram  {\em $2$-colored} if it is colored in
$\{ r,b\}$ so that opposite edges have different colors. It is clear
from the definition that {\it a free link diagram can be $2$-colored
if and only if it is componentwise even.}
\end{dfn}

By using the translations in the figure we see that {\it an
edge-colored trivalent graph immersed in the plane corresponds
uniquely to a colored free link diagram.} In the fiigure we see that
a colored planar graph may correspond to a free link diagram that
has virtual crossings. The relationship between planar colorings and
properties of free links needs further study. \smallbreak

\section{Braids and the Kuperberg  $sl(3)$ Bracket}

\begin{figure}
     \begin{center}
     \begin{tabular}{c}
     \includegraphics[width=8cm]{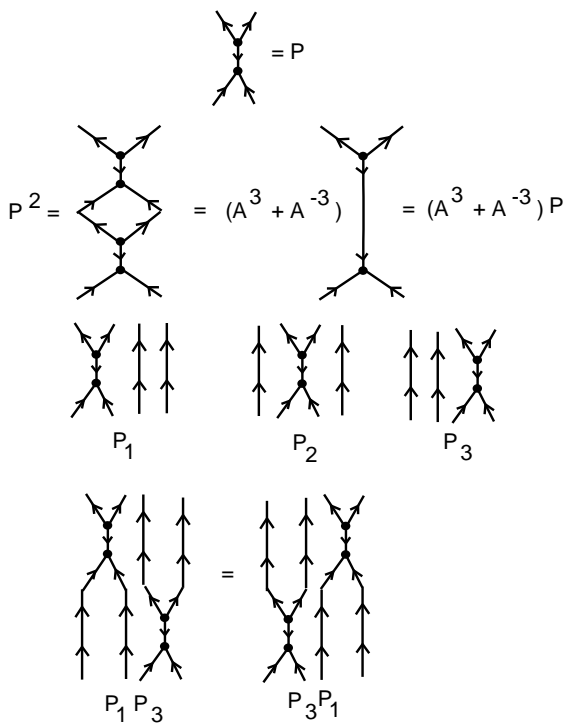}
     \end{tabular}
     \caption{\bf Hecke Algebra 1}
     \label{hecke1}
\end{center}
\end{figure}

\begin{figure}
     \begin{center}
     \begin{tabular}{c}
     \includegraphics[width=8cm]{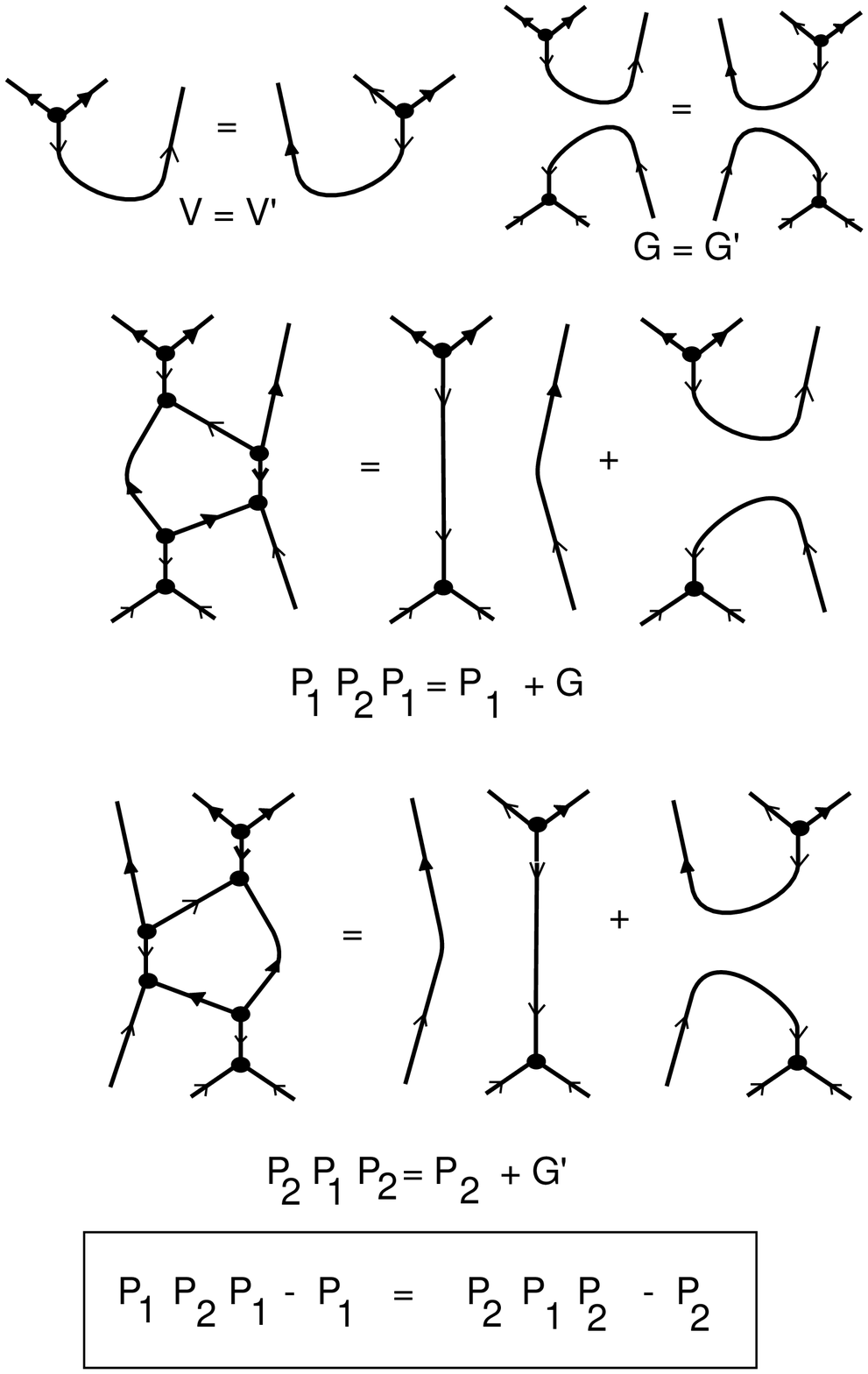}
     \end{tabular}
     \caption{\bf Hecke Algebra 2}
     \label{hecke2}
\end{center}
\end{figure}

\begin{figure}
     \begin{center}
     \begin{tabular}{c}
     \includegraphics[width=8cm]{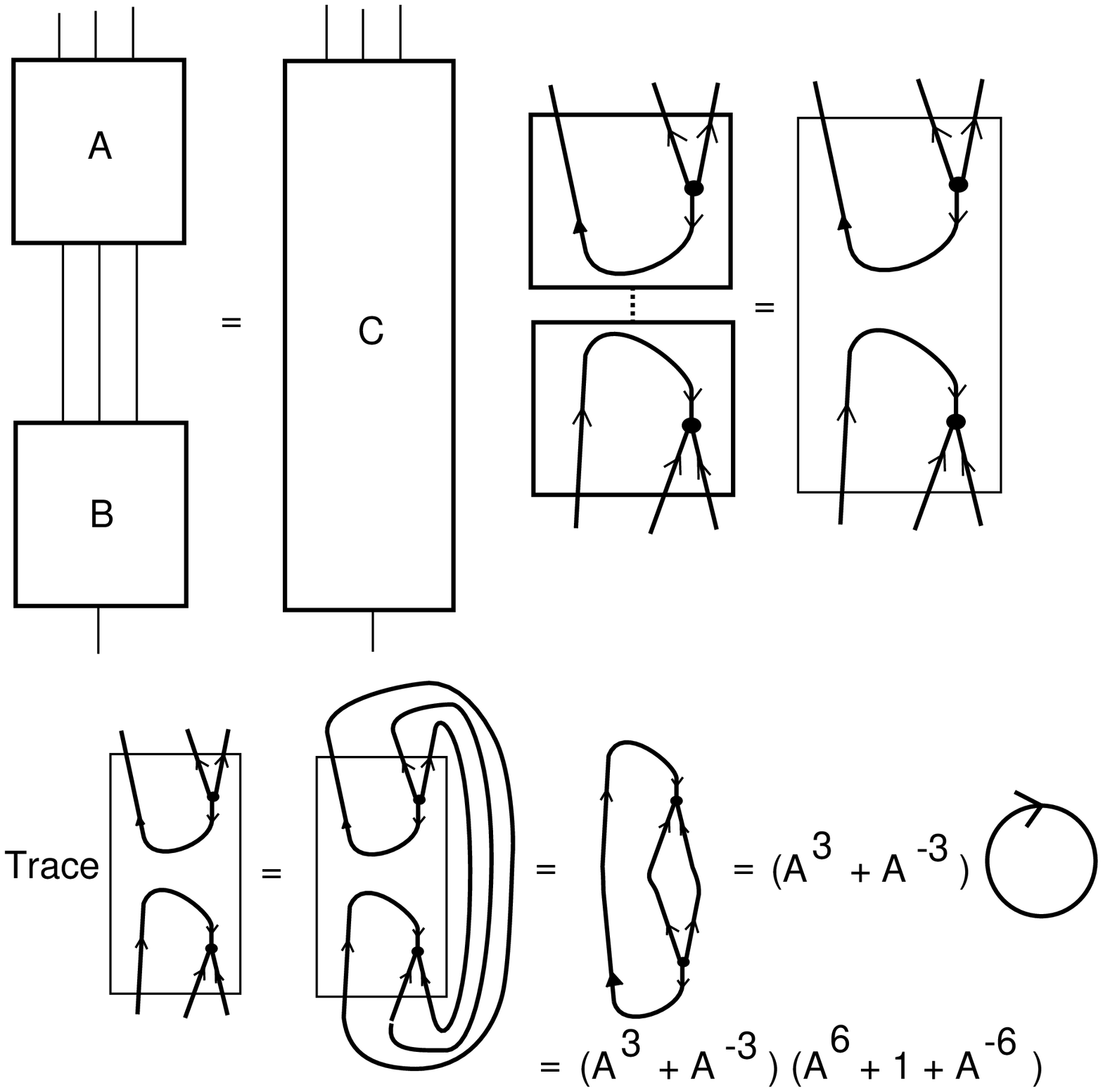}
     \end{tabular}
     \caption{\bf Graph Category and Trace}
     \label{graphcat}
\end{center}
\end{figure}

Graphs can act on graphs by systmatically attaching edges of one graph to the edges of another.
Braid groups and representation of them into graphical cateegories can be defined in this manner.
All the invariants considered in this paper can be defined for
braids with values in a special algebra of tangles with polyomial or
numerical coefficients. In this section we concentrate on the
formulation of the $sl(3)$ invariant for braids.  \bigbreak

We will use our graphical reduction method to give a
graph-polynomial valued trace function on the Virtual Hecke algebra.
Our procedure is analogous to the graphical trace on the
Temperley-Lieb algebra \cite{KP} that is implicit in the bracket
model for the Jones polynomial. Here we apply the same ideas to our
graphical analysis of the Kuperberg $sl(3)$ bracket for virtual
knots. In order to accomplish this, we first recall the formalism of
the Kuperberg  $sl(3)$ bracket as described in \cite{Ohtsuki} and
translate this into braid form. View Figure~\ref{kupbra}. Here we
show the expansion formulas for the Kuperberg  $sl(3)$ bracket. The
reader can regard the expansion of a crossing as the expansion of a
generator $\sigma_{i}$  of the Artin braid group. Then we see that
this expansion has the form
$$\sigma_{i} = A^{2} I  - A^{-1} P_{i}$$ where $P_{i}$ stands for the double-$Y$-graph depicted in
Figure~\ref{hecke1}. In that Figure we have shown how $P_{i}$ is
formed for $i=1,2,3$ in the image of the braid group on $4$ strands.
We have also illustrated how $$P_{i}^{2} = (A^{3} + A^{-3})P_{i}$$
and in Figure~\ref{hecke2} we have illustrated how the quadrilateral
expansion formula for the Kuperberg  $sl(3)$ bracket leads to the
graphical identities:
\begin{enumerate}
\item $P_{i}P_{i+1}P_{i} = P_{i} + G_{i}$
\item  $P_{i+1}P_{i}P_{i+1} = P_{i+1} + G'_{i}$
\item  $G_{i} =  G'_{i}$
\end{enumerate}
Thus one can conclude the graphical identity
$$P_{i}P_{i+1}P_{i} - P_{i} = P_{i+1}P_{i}P_{i+1} - P_{i+1}. $$
and it is easy to see that
$$P_{i}P_{j} = P_{j}P_{i}$$ for $|i-j|>2.$
\smallbreak

\begin{dfn}
We formalize this multiplication of graphs via the pattern shown in
Figure~\ref{graphcat}. We make a category, $GraphCat,$ by using
graphs with free ends (nodes of order one) immersed in the plane.
Other than the free ends, the graphs have  trivalent vertices
(oriented as in this paper with all edges pointing either in to a
vertex or out from the vertex). They may have virtual crossings in
their planar immersions. We include the circle among these graphs,
but it is reduced to a scalar just as earlier in this paper. The
ends of the graphs are arranged so that each graph is in a rectangle
with its sides parallel to the standard vertical and horizontal
directions. The free ends of the graph are either at the top of the
box or at the bottom and they are oriented compatibly with the
graph. A box with no free ends from the top or from the bottom is
given a dotted line at the top or bottom just for notational
purposes. The objects in $GraphCat$  consist in the set $Obj = \{
[0],[1],[2],\cdots \}$ in $1-1$ correspondence with the non-negative
integers. A graph $G$ with $m$ free lower ends and $n$ free upper
ends is a morphism
$$G: [m] \longrightarrow [n].$$
The most general morphisms in $GraphCat$ are linear combinations of
graph-morphisms from $[n]$ to $[m]$ ($n$ and $m$ are fixed for a
given linear combination)  with coefficients in the ring
$\Z[A,A^{-1}]$. Products are defined by taking the set of
graph-morphisms as generators for a module over the ring
$\Z[A,A^{-1}]$ and formally multiplying elements of this ring.
Individual graph morphisms multiply by their categorical composition
as described above.
\end{dfn}
\smallbreak

The graph in the box is taken up to equivalences using the Kuperberg
$sl(3)$ bracket expansion. This means that bigons and quadrilaterals
are expanded out and any circles that result are evaluated.

\begin{dfn}
A {\em morphism} in this category is the equivalence class of such a
box.
\end{dfn}

Note that this equivalence class will often be a linear combination
of graph classes. Morphisms are composed by direct multiplication
(attachment of bottom ends to top ends as in the Figure) of
graph-boxes, and the distributive law applies to the linear
combinations. Note that two graph-boxes can be multiplied only if
they have a matching number of free edges. We call this category
$GraphCat.$ \smallbreak

Recall that ${\cal M}$ is the module
$\Z[A,A^{-1}][t_{1},t_{2},\cdots]$ of Laurent polynomials over
formal commutative products of graphs from $\cal T$ where $\cal T$
denotes the set of equivalence classes of graphs without free ends.
We define $$Trace:GraphCat \longrightarrow {\cal M}$$ by the formula
$$Trace(G) = [[\bar{G}]]$$ where $\bar{G}$ denotes the closure of $G$ directly analogous to the standard closure of a braid, as shown in Figure~\ref{graphcat} and $[ [\bar{G}]]$ denotes the reduced equivalence class in  ${\cal M}$ of the linear graph combination $\bar{G}.$
\smallbreak

From the point of view of the theory of braids the Hecke algebra
$H_{n}(q)$ is a quotient of the group ring $\Z[q,q^{-1}][B_{n}]$ of
the Artin braid group by the ideal generated by the quadratic
expressions
\begin{equation} \label{HeckeQuad}
\sigma_{i}^2 - z\sigma_{i} - 1
\end{equation}
for $i = 1, 2, \ldots n-1,$ where $z = q - q^{-1}.$ This corresponds
to the identity $\sigma_{i} - \sigma_{i}^{-1} = z1$, which is
sometimes regarded diagrammatically as a  skein identity for
calculating knot polynomials. By the same token, we define the {\it
virtual Hecke algebra} $VH_{n}(q)$ to be the quotient of the group
ring of the virtual braid group $VB_{n}$ (See \cite{VirtualC})
${\Bbb Z}[q,q^{-1}][VB_{n}]$ by the ideal generated by
Eqs.~(\ref{HeckeQuad}). Here the reader can verify that with $q =
A^{2}$ and the algebraic expressions ($I$ is the identity element in
the algebra),
\begin{enumerate}
\item $\sigma_{i} = A^{2} I  - A^{-1} P_{i},$
\item $\sigma^{-1}_{i} = A^{-2} I  - A^{1} P_{i},$
\item$ P_{i}^{2} = (A^{3} + A^{-3})P_{i},$
\item $P_{i}P_{i+1}P_{i} - P_{i} = P_{i+1}P_{i}P_{i+1} - P_{i+1}.$
\item $P_{i}P_{j} = P_{j}P_{i}$ for $|i-j|>2.$
\end{enumerate}

Using these definitions we  arrive at the identical Hecke algebra
that is obtained from the equation~(\ref{HeckeQuad}). The equations
involving the elements $P_{i}$ ensure that the braiding relations
are satisfied by the images of the $\sigma_{i}$ in the Hecke
algebra. Of course there are virtual generators $v_{i}$ in the
virtual braid group and these satisfy relations that we shall omit
to write here. The reader can consult \cite{VirtualC} for this
information. The virtual Hecke algebra also has virtual elements,
that we call $v_{i}.$ The virtual element $v_{i}$ in $VB_{n}$
consists in $i-1$ parallel strands, then a virtual crossing of
strands $i$ and $i+1$ and then parallel strands from strand $i+2$ to
strand $n.$ We will assume that $v_{i}P_{i} = P_{i}v_{i} = P_{i}$
for this representation. This is in accord with our previous
assumptions about  the behaviour of virtual crossings that are
adjacent to a trivalent graphical vertex. We let $VH_{n} =
VH_{n}(q)$ denote the virtual Hecke algebra on $n$ strands, written
in the above form using $P_{i}$. In this form, we have a surjection
$$h:VB_{n} \longrightarrow VH_{n}$$ described by these equations. Note that we use $\sigma_{i}$ both for the braid group generator and for its image in the Hecke algebra. In the Hecke algebra  $\sigma_{i}$ expands in terms of the identity and the projector $P_{i}.$
\smallbreak

Note that in all diagrams involving the virtual Hecke algebra or
involving virtual braids, when we speak of $VB_{n}$ or $VH_{n}$, the
number of free ends at the bottom is equal to the number of free
ends at the top and is equal to $n$ in both cases. \smallbreak

Along with this mapping of the virtual braid group to the virtual
Hecke algebra we have a functor from the category of virtual braids
to the category of graphs $GraphCat$, $$g:B_{n} \longrightarrow
GraphCat,$$ given by expanding each braid generator according to the
Kuperberg  $sl(3)$ bracket as in Figure~\ref{kupbra}.  It is clear
at once from the discussion above that this map $g$ factors through
the virtual Hecke algebra so that we have
$$k: VH_{n} \longrightarrow GraphCat$$ taking the virtual Hecke algebra as a category on one object to a subalgebra of $GraphCat$ and so that $g = k \circ h.$ This means that for a virtual braid $b$ with standard closure $\bar{b},$ the extended Kuperberg
$sl(3)$ bracket with values in $M$ can be expressed by the formula
below where $k$ is the map defined above with domain the virtual
Hecke algebra.
$$[[\bar{b}]]= Trace(g(b)) = Trace(k(h(b))$$  Here
$Trace: GraphCat \longrightarrow {\cal M}$ is the map defined at the
beginning of this section. \smallbreak

The extended Kuperberg  $sl(3)$ bracket gives a topologically
invariant function on the virtual braid group, and it gives a trace
function on the virtual Hecke algebra. This is the first time that
such a trace function has been constructed for the virtual Hecke
algebra. For the virtual Hecke algebra we define $Tr:VH_{n}
\longrightarrow M$ by the formula
$$Tr(x) = Trace(k(x)).$$  In other words, we interpret basic products in the virtual Hecke algebra as
graphs in $GraphCat$ and we evaluate them by closure and the
reduction via Kuperberg  $sl(3)$ bracket relations. Note that
$Tr(XY) = Tr(YX)$ for $X,Y \in VH_{n}$  because both $XY$ and $YX$
have the same closure.  Other properties of this trace will be
analyzed in a separate publication. We have shown here that our
generalized Kuperberg  $sl(3)$ bracket can be expressed in terms of
this trace on the virtual Hecke algebra. \smallbreak

\section  {\bf Remarks}
This article arose through our discussions of new possibilities in
virtual knot theory and in relation to advances of Manturov using
parity in virtual knot theory, particularly in the area of free
knots. In the paper \cite{MOY}, there is a model for the
$sl(n)$-version of the Homflypt polynomial for classical knots. This
model is  based on patterns of smoothings as shown in
Figure~\ref{muhoya}.

\begin{figure}
     \begin{center}
     \begin{tabular}{c}
      \includegraphics[width=100pt]{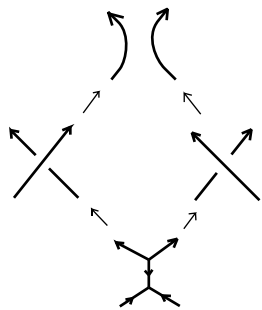}
     \end{tabular}
     \caption{\bf Murakami-Ohtsuki-Yamada relation for $sl(n)$}
     \label{muhoya}
\end{center}
\end{figure}

These patterns suggested to us the techniques we use in this paper
with the Kuperberg brackets, and we expect to generalize them
further. In this method,  the value of the polynomial for a knot is
equal to the linear combination of the values for two graphs
obtained from the knot by resolving the two crossings as shown in
Figure~\ref{muhoya}.
 \bigbreak

\end{document}